\title{Stringy invariants for horospherical varieties \\ of complexity one}
\date{\today}
\author{Kevin Langlois\footnote{Heinrich Heine Universit\"at 40225 D\"usseldorf, Germany.}, Cl\'elia Pech\footnote{School of Mathematics, Statistics and Actuarial Science, University of Kent, Canterbury CT2 7FS, United Kingdom.}, Michel Raibaut\footnote{Univ Grenoble Alpes, Univ Savoie Mont Blanc, CNRS, LAMA, 73000 Chamb\'ery., France.}}
\theoremstyle{plain}
\newtheorem{theorem}{Theorem}[section]
\newtheorem{lemma}[theorem]{Lemma}
\newtheorem{proposition}[theorem]{Proposition}
\newtheorem{corollary}[theorem]{Corollary}
\newtheorem*{theorem*}{Theorem}
\newtheorem{propdef}[theorem]{Proposition-Definition}
\theoremstyle{definition}
\newtheorem{definition}[theorem]{Definition}
\newtheorem{example}[theorem]{Example}
\newtheorem{notation}[theorem]{Notation}
\theoremstyle{remark}
\newtheorem{remark}[theorem]{Remark}
\def\ZZ{{\mathbb Z}}
\def\NN{{\mathbb N}}
\def\QQ{{\mathbb Q}}
\def\CC{{\mathbb C}}
\def\Var{\textbf{Var}_{\CC}}
\def\Hom{\operatorname{Hom}}
\def\Spec{\operatorname{Spec}}
\def\L{\mathscr{L}}
\def\tronc{\pi}
\def\KO{K_0(\Var)}
\def\LL{\mathbb{L}}
\def\AA{\mathbb{A}}
\def\Mc{\hat{\mathscr{M}}_\CC}
\def\mm{\mu}
\def\ord{\operatorname{ord}}
\def\OO{\mathscr{O}}
\def\I{\mathscr{I}}
\def\E{\mathcal{E}_{st}}
\def\EE{E_{st}}
\def\XX{\mathbb{X}}
\def\D{\mathfrak{D}}
\def\F{\mathcal{F}}
\def\Am{H^0(C_0,\OO_{C_0}(\lfloor \D(m) \rfloor))}
\def\DF{\mathscr{E}}
\def\O{\mathcal{O}}
\def\K{\mathcal{K}}
\def\alphaG{\alpha^*_\Gamma}
\def\alphaM{\alpha^*_M}
\def\p{\mathfrak{p}}
\def\Sc{\mathfrak{S}}
\def\N{\mathscr{N}}
\def\hor{\mathcal{X}_{\mathrm{hor}}}
\def\ver{\mathcal{X}_{\mathrm{ver}}}
\def\verY{\mathcal{X}_{\mathrm{ver},\alpha}}
\def\val{\underline{\operatorname{val}}}
\def\C{\mathcal{C}}
\def\vinf{\varphi_\infty}
\def\bvinf{\bar{\varphi}_\infty}
\renewcommand{\epsilon}{\varepsilon}
\def\Vert{\operatorname{Vert}}
\def\Ray{\operatorname{Ray}}
\def\Div{\operatorname{Div}}
\def\div{\operatorname{div}}
\def\PL{\mathrm{PL}}
\def\PP{\mathbb{P}}
\def\DFd{\DF_{dis}}
\def\Cone{\operatorname{Cone}}
\def\Conv{\operatorname{Conv}}
\def\Sp{\operatorname{Sp}}
\def\ra{\rightarrow}
\def\tor{Q}
\def\qfun{R}
\def\dw{\mathbb{X}_{+}(Q)}
\def\prop{\mathrm{Prop}}
\begin{document}

\maketitle

\begin{abstract}
 We determine the stringy motivic volume of log terminal horospherical varieties of complexity one and obtain a smoothness criterion using a comparison of stringy and usual Euler characteristics.
\end{abstract}

\setcounter{tocdepth}{1}

\section{Introduction}

Motivic integration is a natural geometric counterpart to $p$-adic integration. It is a powerful tool for creating 
new invariants for algebraic varieties, such as stringy invariants, see~\cite{Batyrev1998:StringyHodge}. In \cite{Batyrev-Moreau}, 
stringy invariants were instrumental in simplifying a smoothness criterion for embeddings of horospherical homogeneous spaces; 
however for general spherical varieties the formulation of this criterion remains a conjecture, see \cite[Conjecture 6.7]{Batyrev-Moreau}.

In this paper, we study stringy invariants of $\QQ$-Gorenstein horospherical varieties of complexity one with log terminal singularities. 
One of our results, Theorem \ref{t:stringy-volume}, provides an explicit formula for 
stringy invariants in terms of their combinatorial description. 
We also obtain a rational form for the stringy motivic volume and, using convex geometry, an explicit finite set of candidate poles, 
see Theorem~\ref{t:candidate-poles}. Finally, under additional assumptions, we rephrase in Theorem \ref{t:smoothness} 
a smoothness criterion for these varieties in terms of the stringy Euler characteristic. 

Throughout, we work over complex numbers. 
We denote by $G$ a connected simply connected reductive linear algebraic group and by $B$ a Borel subgroup. By `simply connected' we mean that $G=G^{ss} \times F$, where $G^{ss}$ is a simply connected semisimple group in the usual sense and $F$ is an algebraic torus. A $G$-homogeneous space $G/H$, where $H$ is a closed subgroup, is \emph{horospherical} if $H$ contains a maximal unipotent subgroup of $G$. These spaces are locally trivial torus fiber bundles $G/H \to G/P$ 
over flag varieties, where $P$ is the parabolic subgroup normalizing $H$. The fibers are isomorphic to the algebraic torus $T = P/H$. 
The character lattice of $T$ and the set of simple roots indexing the Schubert divisors in $G/P$ entirely describe $G/H$, see Section~\ref{s:horo-def}.

A \emph{horospherical $G$-variety} is a normal $G$-variety such that every orbit is horospherical. Its \emph{complexity} 
is the minimum of the codimensions of its $G$-orbits. 
Horospherical varieties of complexity at most one admit a combinatorial description in terms of objects coming from convex geometry, almost analogous to the classical situation for toric varieties. This combinatorial description was introduced by Timashev in \cite{Timashev:Gmanifolds1997}, and adapted from Luna--Vust theory \cite[Chapter 3]{TimashevBook}, see also \cite{LangloisTerpereau}. 
It goes as follows.

Consider a \emph{simple $G$-variety} $X$, i.e., a $G$-variety which contains an open affine $B$-stable subset which intersects every $G$-orbit of $X$. Any simple horospherical $G$-variety $X$ of complexity one is described by a $4$-tuple $(C,G/H,\D,\F)$, where $C$ is a smooth projective curve and $G/H$ is a horospherical $G$-homogeneous space. The pair $(C,G/H)$ encodes the birational equivariant type of $X$. The third datum $\D$ is a \emph{polyhedral divisor} on $C$, i.e., a Weil divisor on $C$ whose coefficients are polyhedra, see Definition~\ref{d:polyhedral-divisor}. Finally $\F$ denotes the set of $B$-stable prime divisors on $X$ which are not $G$-stable and which contain a $G$-orbit of $X$. We call the pair $(\D,\F)$ a \emph{colored polyhedral divisor}. 

It follows from \cite{Sumihiro:EquivCompletion1} that any normal $G$-variety possesses a finite open covering by stable subsets which are simple $G$-varieties. Thus any horospherical $G$-variety of complexity one can be described by a triple $(C,G/H,\DF)$, where $\DF$ is a \emph{colored divisorial fan} consisting in a certain finite collection of colored polyhedral divisors, see Definition~\ref{d:polyhedral-divisor}. 

The goal of this paper is to compute the stringy motivic volume of $X$ in terms of $(C,G/H,\DF)$, under the assumption that $X$ is a normal $\QQ$-Gorenstein variety with log terminal singularities. 
For such $X$, let $f : X' \to X$ be a log resolution of singularities. The \emph{stringy motivic volume} of $X$ is the integral
\begin{equation}\label{e:motivicvolume}
	\E(X) = \int_{\L(X')} \LL^{-\ord K_{X'/X}}d\mm_{X'}.
\end{equation}
Here $\mathbb L$ is the class of the affine line, 
$K_{X'/X}$ the relative canonical divisor, $\L(X')$ the \emph{arc space} of $X'$, and 
the measure $\mm_{X'}$ takes values in a modification of the Grothendieck ring $\KO$ of complex algebraic varieties.

The measure $\mm_{X'}$ can be explicitly described on particular subsets of $\L(X')$. Denote by $\pi_{n}$ the truncation morphism sending an arc of $X'$ to its $n$-jet. A subset $Z \subset \L(X')$ is a \emph{cylinder} if there is $n_0 \in \NN$ such that for any $n\geq n_0$, the subset $\tronc_n(Z)$ is a constructible set in the space of $n$-jets of $X'$, and $Z = \tronc_n^{-1}(\tronc_n(Z))$. On the cylinders the motivic 
volume is defined as
\[
	\mm_{X'}(Z) = [\tronc_n(Z)] \LL^{-n\dim X'}
\]
and does not depend on the choice of $n\geq n_0$. 
Measurable subsets of $\L(X')$ are approximations of cylinders. 
The integral of a measurable function $F : \L(X') \to \ZZ$ with respect to $\mm_{X'}$ is the sum (if it exists) of the series $\sum_{s\in\ZZ} \mm_{X'} (F^{-1}(s)) \LL^{-s}$. 

Note that the stringy motivic volume has a rational form (see the alternative expression in Definition~\ref{d:vol-mot-rational}) and it is independent of the choice of a resolution of singularities, see \cite{Batyrev1998:StringyHodge}.

If $X$ is toric with lattice $N$ and fan $\Sigma$, 
let  $\theta_X : |\Sigma| \to \QQ$  be the piecewise linear 
function which takes value $-1$ 
on each primitive generator of the one-dimensional cones of $\Sigma$. 
Here $|\Sigma|$ denotes the reunion of all cones of $\Sigma$. 
Then
\begin{equation}\label{e:stringyinvariant}
	\E(X) = (\LL-1)^r \sum_{\nu \in |\Sigma|\cap N} \LL^{\theta_X(\nu)}.
\end{equation}
Equation~\eqref{e:stringyinvariant} was proved in \cite[Theorem 4.3]{Batyrev-Moreau}, albeit in a more general setting. We follow the proof of \emph{loc. cit.} which consists of two main steps. 

Let $K$ be a field extension of $\CC$. The set $\L(X)(K)$ of $K$-points in the arc space $\L(X)$ of the toric variety $X$ identifies with $X(\O)$, where $\O = \O_K :=  K[[t]]$. 
The torus $T(\O)$ naturally acts on $X(\O) \cap T(\K)$, where $\K$ is the fraction field of $\O$, and $X(\O),T(\K)$ are viewed as subsets of $X(\K)$. 
The set of orbits of $X(\O) \cap T(\K)$ is in bijection with $|\Sigma|\cap N$  \cite{Ishii:ArcSpaceToric}. This yields 
natural cylinders $\C_\nu \subset \L(X)$ for any $\nu \in |\Sigma|\cap N$.

In the second step, we study the integral $\int \LL^{-\ord K_{X'/X}}d\mm_{X'}$ along each cylinder $\C_\nu$, where $X' \to X$ 
is a desingularisation given by a fan subdivision. Since $X'(\O) \cap T(\K)$ identifies with $X(\O) \cap T(\K)$ and the complement in $\L(X')$ has motivic measure zero, we obtain an equality
\begin{equation}\label{e:integralequality}
	\int_{\L(X')} \LL^{-\ord K_{X'/X}}d\mm_{X'} = \sum_{\nu \in |\Sigma|\cap N} \int_{\C_\nu} \LL^{-\ord K_{X'/X}}d\mm_{X'}.
\end{equation}
Equation~\eqref{e:stringyinvariant} is then obtained by computing each summand of the right-hand side.

When adapting these two steps to horospherical varieties of complexity one we encounter significant differences. 
For simplicity, let us explain these differences in the complexity one \emph{toric} case, i.e., assume that $X$ is a normal $\QQ$-Gorenstein $T$-variety 
of complexity one with log terminal singularities. We denote by $(C,T,\DF)$ the combinatorial data associated with $X$. The first step is then modified as follows. 
The variety $X$ contains an open subset of the form $\Gamma \times T$, where $\Gamma \subseteq C$ is an open dense subset. 
Thus comparing with the toric setting it is natural to consider the action of $T(\O)$ on
\begin{equation}\label{e:arcspace}
	X(\O) \cap (\Gamma \times T) (\mathcal{K}).
\end{equation}
However, when we follow the almost similar approach of Ishii \cite{Ishii:ArcSpaceToric}, 
we find uncountably many $T(\O)$-orbits. To solve this problem, 
we cut the set \eqref{e:arcspace} in two disjoint pieces, namely, the subset of \emph{horizontal arcs} and the subset of \emph{vertical arcs}. Thus we obtain a partition of the set \eqref{e:arcspace} by parts $\C_{(y,\nu,\ell)}$ which are $T(\O)$-stable and indexed by a countable set $|\DF|_{\Gamma} \cap \N$ combinatorially determined by $\DF$.

The second step consists in checking that each $\C_{(y,\nu,\ell)}$ is measurable, see Lemmas~\ref{l:vol-hor} and~\ref{l:vol-ver}, and to compute
\begin{equation}\label{e:integralrelation}
	\int_{\L(X')} \LL^{-\ord K_{X'/X}}d\mm_{X'} = \sum_{(y,\nu,\ell) \in |\DF|\cap \N} \int_{\C_{(y,\nu,\ell)}} \LL^{-\ord K_{X'/X}}d\mm_{X'},
\end{equation}
where $X'$ is obtained by an explicit desingularization determined by $\DF$, see Section~\ref{s:desingularization}. 

The function $\theta_X : |\Sigma| \to \QQ$ introduced above in the toric setting is a support function of a torus-invariant canonical divisor. This analogy between support functions and Cartier $\QQ$-divisors persists for horospherical varieties of complexity one, see Section~\ref{s:support}.
However it is not possible, adapting the proofs in \cite{Batyrev-Moreau}, to directly obtain a version involving the support function of an invariant canonical divisor. Instead we need to introduce an \emph{auxiliary function} $\omega_X$, see Proposition-Definition~\ref{pd:support}. 
This $\omega_X$ does not define in the usual sense a Cartier $\QQ$-divisor
(see Section~\ref{s:hypersurface} for further details). Note that in \cite{Lan17} we use $\omega_X$ for characterizing singularities related
to the minimal model program.

Our main result can be stated as follows. 
\begin{theorem}\label{t:main}
Let $X$ be a $\QQ$-Gorenstein horospherical variety of complexity one with log terminal singularities and 
combinatorial data $(C,G/H,\DF)$. 
Assume that $\Gamma\subset C$ is a dense open subset which does not contain any special point (see Definition~\ref{d:specialpoints}). Then 
\[
	\E(X) = [G/H]\left[ \sum_{(y,\nu,\ell) \in |\DF|_{\Gamma}\cap \N} [X_\ell] \,\LL^{\omega_X(y,\nu,\ell)}  \right],
\]
where $X_0 = \Gamma$ and $X_\ell = \AA^1 \setminus \{0\}$ if $\ell \geq 1$.
\end{theorem}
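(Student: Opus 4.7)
The plan is to adapt, with complexity-one modifications, the two-step strategy of Batyrev--Moreau recalled in the introduction. First I would fix a log-resolution $f\colon X'\to X$ produced by the explicit desingularization of Section~\ref{s:desingularization}, so that the integral
\[
\E(X)=\int_{\L(X')} \LL^{-\ord K_{X'/X}}\,d\mm_{X'}
\]
can be approached cylinder by cylinder. The key observation is that the open subset $\Gamma\times G/H\subset X$ meets every stratum that matters for the motivic measure: after suitable shrinking the complement of $(\Gamma\times G/H)(\K)\cap X(\O)$ inside $\L(X')(K)$ is $\mm_{X'}$-negligible, so the whole task reduces to integrating along this generic locus.

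Next I would use the partition of $X(\O)\cap(\Gamma\times G/H)(\K)$ by the cylinders $\C_{(y,\nu,\ell)}$ from Section~\ref{s:arc-space}. The splitting into horizontal and vertical arcs (those mapping dominantly to $C$ versus those concentrated over a closed point $y\in\Gamma$) is what keeps the indexing countable, even though the naive action of $T(\O)$ on the whole stratum produces uncountably many orbits. On $\hor$ the curve parameter collapses to the single datum $\ell=0$, whereas on $\ver$ one records the base point $y$ and a level $\ell\geq 1$; the combinatorial datum $\nu$ then tracks the Luna--Vust cone the arc lands in, producing the bijection between the set of cylinders and $|\DF|_\Gamma\cap\N$.

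Granted such a partition, additivity and continuity of the motivic measure let me write
\[
\E(X)=\sum_{(y,\nu,\ell)\in|\DF|_\Gamma\cap\N}\int_{\C_{(y,\nu,\ell)}}\LL^{-\ord K_{X'/X}}\,d\mm_{X'},
\]
as in \eqref{e:integralrelation}. Each individual integral I would evaluate through the transformation rule, pushing the arcs along $f$ back to $X$ where the product structure of $\Gamma\times G/H$ lets the integrand factorize. The factor $[G/H]$ arises from the Zariski-locally trivial torus-fibre bundle $G/H\to G/P$, the factor $[X_\ell]$ comes from the position of the arc along $C$ (a free point of $\Gamma$ when $\ell=0$, or a formal disk of prescribed $t$-order when $\ell\geq 1$, which is parametrized by $\AA^1\setminus\{0\}$), and the exponent $\omega_X(y,\nu,\ell)$ encodes the discrepancy contribution, exactly the quantity for which Proposition-Definition~\ref{pd:support} supplies the correct auxiliary function, since the true support function of a canonical divisor fails to do the job.

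The main technical obstacle is the vertical part: showing that each $\C_{(y,\nu,\ell)}$ is a measurable cylinder and that its volume equals the expected factor $[X_\ell]\LL^{\omega_X(y,\nu,\ell)}$ demands a careful finite-truncation argument, since the corresponding stratum is not of finite type under any obvious group action. Here I would invoke Lemmas~\ref{l:vol-hor} and \ref{l:vol-ver} to treat the horizontal and vertical cases separately, and then use the combinatorial description of $\omega_X$ from Proposition-Definition~\ref{pd:support} to identify the exponent in a way that is manifestly independent of the chosen resolution. A final convergence argument inside $\Mc$ (or a suitable extension by $\LL^{1/k}$), justified by the log terminal hypothesis, assembles the individual contributions into the claimed closed form.
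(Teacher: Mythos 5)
Your proposal captures the paper's high-level two-step scheme faithfully: restrict to $\L_\Gamma(X')$, decompose by the map $\val$ into cylinders indexed by $|\DF|_\Gamma\cap\N$, split into horizontal and vertical contributions, and recombine. The identification of the factors $[G/H]$ (parabolic induction), $[\Gamma]$ versus $\AA^1\setminus\{0\}$ (horizontal versus vertical), and the exponent $\omega_X$ is the correct shape of the answer. Two things, however, need attention.

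First, a misstatement: you describe the vertical arcs as those ``concentrated over a closed point $y\in\Gamma$''. This is backwards. By Lemma~\ref{l:arc-injective}, vertical arcs are exactly those with $y_\alpha\in C_0\setminus\Gamma$ and $\ell_\alpha\neq 0$, i.e.\ they are pinned at a \emph{special} point outside $\Gamma$; horizontal arcs are the ones whose image of the closed point stays in $\Gamma\times X_\sigma$. The whole point of removing the special points from $\Gamma$ is to make the vertical indexing finite in the $C$-direction.

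Second, and more importantly, your proposal treats the appearance of $\omega_X$ in the exponent as automatic (``exactly the quantity for which Proposition-Definition~\ref{pd:support} supplies the correct auxiliary function''), and suggests a transformation-rule push-back to $X$ to factorize the integrand. Neither step is nearly as direct as you imply, and the latter is not what the paper does: the computation stays on $X'$, with $\ord_{K_{X'/X}}$ evaluated directly from the ideals of the exceptional $G$-divisors (via the descriptions in \cite[Remark 3.16]{PetersenSuss}). For the horizontal cylinders this gives the exponent $\vartheta_X(C,\nu,0)$ (Lemma~\ref{l:integral-hor}), which coincides with $\omega_X$ on tails by linearity (Corollary~\ref{c:integral-hor}). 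For the vertical cylinders, however, the naive exponent is \emph{not} $\omega_X$ but rather $\vartheta_X(\xi)-\sum_i(\kappa(p_i)b_y+\kappa(p_i))\langle w_i,(\nu,\ell)\rangle$ (Lemma~\ref{l:ideal-vert}). Turning this into $\omega_X(\xi)$ requires the identity
\[
(\vartheta_{X'}-\vartheta_X)(y,\kappa(p)p,\kappa(p))=-1-\omega_X(y,\kappa(p)p,\kappa(p))
\]
for new vertical divisors, which is the content of Proposition~\ref{p:identity-vert}. Its proof is not a formality: one builds an auxiliary polyhedral divisor $\hat\D$ on $\PP^1$ with locus $\AA^1$, interprets $X(\hat\D)$ simultaneously as a $T$-variety and as a toric variety for the Cayley cone, and compares two independent computations of the same relative canonical divisor to force the identity. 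This is the technical heart of the theorem and the place where the auxiliary function $\omega_X$ (rather than the support function of a Cartier canonical divisor) becomes essential; the proposal does not identify this step or supply a substitute for it, so it has a genuine gap.
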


Let us give a brief summary of the contents of each section. In the second and third sections, we introduce notations from motivic integration and horospherical varieties that we use throughout this paper. In the fourth section, we study the arc space of a horospherical variety of complexity one. To do this we proceed in two steps. 
First, we give a description in the case where the acting group is an algebraic torus, this corresponds to Sections~\ref{s:hor-ver} and~\ref{s:T-non-affine}. 
The last step concerns the general case, where we reduce to the case of torus actions by parabolic induction. 
We believe that the study in the first step, independently, might be interesting for the study of singularities of $T$-varieties. 

In the fifth section, we prove Theorem~\ref{t:main}. One of the main ingredients of the proof is the construction by combinatorial methods of an explicit desingularization of $X$. 
This construction is explained in Section~\ref{s:desingularization}. Then we give a precise description of the rational form of the stringy motivic volume in terms 
of $\omega_{X}$.
 In the last section, we provide examples and applications. In particular, for a certain class of log terminal horospherical varieties of complexity one we give a simple expression of another invariant called the \emph{stringy Euler characteristic}, see Lemma~\ref{l:stringy-euler}. 
This allows us to rephrase for this class the smoothness condition in terms of the stringy Euler characteristic, see Theorem~\ref{t:smoothness}. 

\section{Arc spaces and motivic integration}
\label{sect:2}
Motivic integration was developed by Batyrev in the smooth case \cite{Batyrev1998:StringyHodge} and by Denef and Loeser \cite{DenefLoeser99:Arc} in full generality. 
In this section, we recall main facts on this theory used throughout the paper (see also \cite{Looijenga, Loeser:Seattle, Blickle2011:Motivic} for survey articles).

\subsection{The arc space of a variety}\label{s:arcspace-def}
Let $X$ be a variety. For any integer $m$ we denote by $\L_m(X)$ the $m$-jet scheme of arcs of $X$. It is a scheme of finite type, and for any field extension $K$ of $\CC$,  its $K$-rational points are the $K[t]/(t^{m+1})$-rational points of $X$. We  will consider it with its reduced structure.
For any $m \geq \ell \geq 0$ we denote by $j^{m,l}:\L_m(X)\to\L_\ell(X)$ the transition morphism induced by the Weil restriction morphism 
from $\CC[t]/(t^{\ell+1})$ to $\CC[t]/(t^{m+1})$. The \emph{arc space} of $X$
is the limit of the projective system $(\L_m(X),(j^{m,l}))$. It is a scheme and for any  field extension $K$ of $\CC$ each $K$-rational point of $\L(X)$ corresponds to a unique $K[[t]]$-rational point of $X$, and vice-versa.
We write  $\tronc_\ell :\L(X) \to \L_\ell(X)$ for the natural truncation morphisms. They commute with the transition morphisms.

\subsection{The Grothendieck ring $\KO$}\label{s:KO} \label{d:grothendieck-ring}
We denote by $[S]\in \KO$ the class of an algebraic variety $S$, and let $\LL = [\AA^1_\CC]$. 
We write $\Mc$ for the completion of $\KO[\LL^{-1}]$ with respect to the usual filtration 
ensuring that $\mathbb L^{-n}\to 0$ (see \cite{DenefLoeser99:Arc}, \cite{Blickle2011:Motivic} and \cite{Nicaise-Sebag:K0}). 

\subsection{Motivic integration}\label{s:motint}

Let $X$ be a smooth $d$-dimensional variety. 

\begin{definition} A subset $C\subset \L(X)$ is a \emph{cylinder} if $C=\tronc_{m_0}^{-1}(\tronc_{m_0}(C))$, for some $m_0$, 
and the set $\tronc_{m_0}(C)$, called the \emph{$m_0$-basis} of $C$, is constructible in $\L_{m_0}(X)$. Note that $\L(X)$ itself is a cylinder (see~\cite[Corollary 2]{Greenberg1966:henselian}). The motivic measure of $C$ is defined as
\[
	\mm_X(C) = [\tronc_m(C)]\LL^{-md}, \quad \text{ for any} \quad m \geq m_0.
\]
This does not depend on $m \geq m_0$ since if $m\geq \ell \geq 0$, the map $j^{m,\ell}\mid_{\tronc_m(C)}: \tronc_m(C)\to \tronc_\ell(C)$ 
is a piecewise trivial fibration with fiber $\AA^{(m-\ell)d}$.
\end{definition} 
\begin{remark}\label{r:measure-closed}
	A larger family of measurable sets can be considered (see~ \cite{Batyrev1998:StringyHodge, DenefLoeser99:Arc}, \cite[Appendix]{Denef-Loeser:McKay}). For instance, if $Y \subsetneq X$ is a closed reduced subscheme, then $\L(Y)$ is measurable, and $\mm_X(\L(Y))=0$ (see~\cite[Equation (3.2.2)]{DenefLoeser99:Arc} or \cite[Proposition 4,5]{Blickle2011:Motivic}).
\end{remark}

\begin{definition}
	A function $F : \L(X) \to \ZZ \cup \{ \infty \}$ is \emph{measurable} if for any $s \in \ZZ \cup \{ \infty \}$ the subset $F^{-1}(s)$ is measurable. In that case, 
\emph{$\LL^{-F}$ is integrable} over a measurable set $C$ of $\L(X)$ if $\mm_X(F^{-1}(\infty))=0$ and the series 
$\sum_{s \in \ZZ} \mm_X(C \cap F^{-1} (s)) \LL^{-s}$
converges in $\Mc$. Then, we let
\[
	\int_C \LL^{-F} d\mm_X := \sum_{s \in \ZZ} \mm_X(C \cap F^{-1} (s)) \LL^{-s}.
\]
\end{definition}

\begin{example}[{\cite[Section 2.4]{Blickle2011:Motivic}}]
	Let $Y$ be a proper closed subscheme of $X$ defined by the sheaf of ideals $\I_Y$.  
	Let $\gamma$ be a point in $\L(X)$ and $\gamma^* : \OO_X \to \OO_{\Spec(k_\gamma[[t]])}$ be its sheaf homomorphism. The \emph{order of $\gamma$ along $Y$} is  $\ord_Y(\gamma):=\sup \left\{ e \in \NN \cup \{ \infty \} \mid \gamma^*(\I_Y) \subseteq (t^e) \right\}$.
	The function $\ord_Y$ is measurable and the set $\ord_Y^{-1}(\infty)$, equal to $\L(Y)$, has zero measure.
	Thus, for a divisor $D=\sum_{i=1}^s a_i D_i$ in $X$ (where $a_i \in \QQ$ and the $D_i$ are prime divisors) and for an arc $\gamma \in \L(X)$, we set 
$\ord_D(\gamma)= \sum_{i=1}^s a_i \ord_{D_i}(\gamma)$
with, value $\infty$ if one of the $\ord_{D_i}(\gamma)$ is infinite. The function $\ord_D$ is then measurable.
\end{example}

\subsection{Stringy invariants of varieties with log terminal singularities}\label{s:def-stringy}

Let $X$ be an irreducible normal $\QQ$-Gorenstein variety, namely, $X$ is a normal variety and a (thus any) canonical divisor $K_X$ is $\QQ$-Cartier. We say 
that a morphism $f : X' \to X$ is a \emph{resolution of singularities} if $X'$ is smooth,  $f$ is birational and proper, and the exceptional locus is the reunion of finitely many normal crossings irreducible smooth divisors. 
\begin{definition} \label{d:vol-mot-rational}
	Let $X$ be a normal $\mathbb Q$-Gorenstein algebraic variety with log terminal singularities. Let $f : X' \ra X$ be a resolution of singularities of $X$ and let $(E_i)_{i\in I}$ be the set of irreducible components of its exceptional locus. 
	Let $K_X$ (resp. $K_{X'}$) be a canonical divisor of $X$ (resp. of $X'$) and $(\nu_i)_{i\in I}$ be the multiplicities of the relative canonical divisor 
	$$ K_{X'/X} := K_{X'} - f^* K_X = \sum_{i \in I} \nu_i E_i. $$
	The \emph{stringy motivic volume $\E(X)$} is usually defined following \cite{Batyrev1998:StringyHodge} as 
	\[
		\E(X) := \sum_{J \subset I} [E_J^\circ] \prod_{j \in J} \frac{\LL-1}{\LL^{\nu_j+1}-1} \in \Mc(\LL^{\frac{1}{m}}),
	\]
	where $m$ is the g.c.d of the denominators of the $\nu_i$, the ring $\Mc(\LL^{\frac{1}{m}})$ is defined as the localization and completion of $\KO$  with respect to $\LL^{\frac{1}{m}}$, and $E_J^\circ$ is $\bigcap_{j \in J} E_j \setminus \bigcup_{i \in I \setminus J} E_i$ for every subset $J$ of $I$. 
\end{definition}

The theorem below follows from the appendix of \cite{Batyrev1998:StringyHodge}, see also \cite{Denef-Loeser:McKay} and \cite{Yasuda} :
\begin{theorem}\label{t:def-stringy}
	The stringy motivic volume is equal to
	\[
		\E(X) = \int_{\L(X')} \LL^{-\ord_{K_{X'/X}}} d \mm_{X'} \in \Mc(\LL^{\frac{1}{m}})
	\]
	and does not depend on the resolution $f : X' \to X$. If $X$ is smooth, then $\E(X) = [X]$.
\end{theorem}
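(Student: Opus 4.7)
The plan is to compute the motivic integral by stratifying $\L(X')$ according to the multi-order of contact of an arc with the exceptional divisor, and then matching the resulting sum with the combinatorial expression in Definition \ref{d:vol-mot-rational}. For each multi-index $\mathbf{k} = (k_i)_{i \in I} \in \NN^I$ with finite support, set
\[
	C_{\mathbf{k}} := \{ \gamma \in \L(X') \mid \ord_{E_i}(\gamma) = k_i \text{ for all } i \in I \},
\]
and write $J(\mathbf{k}) = \{ i \in I \mid k_i \geq 1 \}$. The arcs with $\ord_{E_i}(\gamma) = \infty$ for some $i$ are exactly those factoring through $E_i$, hence form a measurable set of motivic measure zero by Remark \ref{r:measure-closed}. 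Thus $\L(X')$ decomposes, up to a nullset, as the disjoint union of the $C_{\mathbf{k}}$ over all $\mathbf{k}$ with finite support, and $\ord_{K_{X'/X}}$ is constant equal to $\sum_{i \in I}\nu_i k_i$ on $C_{\mathbf{k}}$.

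The next step is the measure computation $\mm_{X'}(C_{\mathbf{k}}) = [E_{J(\mathbf{k})}^\circ]\,(\LL-1)^{|J(\mathbf{k})|}\,\LL^{-\sum_i k_i}$. To establish it, cover $X'$ by étale charts on which the exceptional components with $i \in J(\mathbf{k})$ cutting through a given point of $E_{J(\mathbf{k})}^\circ$ are given by $|J(\mathbf{k})|$ of the $d$ local coordinates (using that $\sum_i E_i$ has smooth normal crossings). For $m$ larger than all $k_i$, the truncation $\tronc_m(C_{\mathbf{k}})$ is locally a product of $E_{J(\mathbf{k})}^\circ$ with affine spaces and with $|J(\mathbf{k})|$ copies of $(\AA^1\setminus\{0\})$, giving class $[E_{J(\mathbf{k})}^\circ]\,(\LL-1)^{|J(\mathbf{k})|}\,\LL^{md - \sum_i k_i}$ in $\KO$. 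Dividing by $\LL^{md}$ yields the advertised formula, and shows in particular that $C_{\mathbf{k}}$ is a cylinder (hence measurable).

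Integrating and summing then gives
\[
	\int_{\L(X')} \LL^{-\ord_{K_{X'/X}}}\,d\mm_{X'} = \sum_{\mathbf{k}} [E_{J(\mathbf{k})}^\circ]\,(\LL-1)^{|J(\mathbf{k})|}\,\LL^{-\sum_i(1+\nu_i)k_i} = \sum_{J \subset I} [E_J^\circ] \prod_{j \in J} (\LL-1)\sum_{k_j \geq 1} \LL^{-k_j(1+\nu_j)}.
\]
The log terminal hypothesis $\nu_j > -1$ ensures that each geometric series converges in $\Mc(\LL^{1/m})$ to $\LL^{-(1+\nu_j)}/(1 - \LL^{-(1+\nu_j)}) = 1/(\LL^{\nu_j+1}-1)$, and also guarantees that the total series makes sense in the normed ring, since the dimensions of the summands over $|\mathbf{k}|$ large tend to $-\infty$. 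The outcome is precisely $\sum_{J \subset I}[E_J^\circ]\prod_{j \in J}(\LL-1)/(\LL^{\nu_j+1}-1) = \E(X)$.

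Independence of the chosen resolution is the real subtlety, and I would deduce it from the transformation rule in motivic integration rather than trying to prove it from the combinatorial side. Given two log resolutions $f_1 : X_1 \to X$ and $f_2 : X_2 \to X$, one chooses a third log resolution $h : Y \to X$ dominating both via morphisms $g_i : Y \to X_i$, which exists by Hironaka. Writing $K_{Y/X_i} = K_Y - g_i^\star K_{X_i}$ and using $K_{Y/X} = K_{Y/X_i} + g_i^\star K_{X_i/X}$, the change-of-variables formula for motivic integrals (see \cite{DenefLoeser99:Arc} or \cite{Blickle2011:Motivic}) gives
\[
	\int_{\L(X_i)} \LL^{-\ord_{K_{X_i/X}}}\,d\mm_{X_i} = \int_{\L(Y)} \LL^{-\ord_{K_{Y/X}}}\,d\mm_Y
\]
for $i=1,2$, so the two integrals are equal. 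The only delicate point is to verify that the integrand and the map are set up so that the transformation rule is applicable; this reduces to the fact that the relevant exceptional loci have measure zero and to the identity of relative canonical divisors displayed above. This step is where I would expect to spend most of the care, as the combinatorial computation in the main body of the proof is essentially formal once the cylinder structure of the $C_{\mathbf{k}}$ is established.
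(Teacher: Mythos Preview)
Your proposal is correct and is precisely the standard argument. Note that the paper does not give its own proof of this statement: it simply cites the appendix of \cite{Batyrev1998:StringyHodge}. The stratification by contact multi-index, the normal-crossing local computation yielding $\mm_{X'}(C_{\mathbf{k}})=[E_{J(\mathbf{k})}^\circ](\LL-1)^{|J(\mathbf{k})|}\LL^{-\sum_i k_i}$, the geometric-series summation (convergent by the log terminal hypothesis), and the appeal to the transformation rule for independence are exactly the steps in Batyrev's original argument and its standard expositions (e.g.\ \cite{Craw2004}, \cite{Blickle2011:Motivic}, \cite{Vey06}).

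Two minor remarks. First, since $I$ is finite the ``finite support'' qualifier on $\mathbf{k}\in\NN^I$ is redundant. Second, where you write ``\'etale charts'' you actually want Zariski-local coordinates: the identity of classes in $\KO$ requires a Zariski-locally trivial fibration over $E_{J(\mathbf{k})}^\circ$, which is what the simple-normal-crossing assumption provides (each $E_i$ is locally cut out by one coordinate in a Zariski chart on the smooth $X'$). Neither point affects the validity of the argument.
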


From $\E(X)$ we can deduce two new invariants for log terminal varieties. These invariants are defined by Batyrev (\cite{Batyrev1998:StringyHodge}) and also studied by several authors such as Denef--Loeser 
\cite{Denef-Loeser:McKay}, Yasuda \cite{Yasuda} and Schepers--Veys (\cite{Veys:Stringyzeta, SchepersVeys:stringy}).
\begin{definition}\label{d:E-function}
	The \emph{$E$-function} of a log terminal variety $X$ is defined as
	\[
		\EE(X;u,v) := \sum_{J \subset I} E(E_J;u,v) \prod_{j \in J} \frac{uv -1}{(uv)^{\nu_j+1}-1},
	\]
	where for any variety $V$, $E(V;u,v)$ 	is the usual Hodge-Deligne polynomial of $V$
	in the variables $u$ and $v$.
	The \emph{stringy Euler characteristic} of $X$ is
	\[
		e_{st}(X) := \sum_{J \subset I} e(E_J) \prod_{j \in J} \frac{1}{\nu_j+1},
	\]
	where $e(E_J)=E(E_J;1,1)$ denotes the usual Euler characteristic.
\end{definition}

\section{Horospherical actions}


Throughout, 
let $G$ be a connected simply connected reductive algebraic group over $\CC$ with Borel subgroup $B$, maximal torus $\tor$, and maximal unipotent subgroup $U$ such that $B=\tor U$. 

\subsection{Horospherical transformations}\label{s:horo-def}

An ($G/H$-)\emph{embedding} of a horospherical homogeneous space $G/H$ is a pair $(X,x)$ such that $X$ is a normal $G$-variety containing $x$, the orbit $G \cdot x$ of $x$ is open, and the stabilizer $G_x$ of $x$ is $H$. For brevity, we denote such an embedding simply by $X$. The subgroup $H$ is called a \emph{horospherical subgroup}.

\paragraph{{\bf Description of horospherical subgroups of $G$.}} Let $\Phi$ be the set of simple roots of $G$ and $W=N_G(\tor)/\tor$ be the Weyl group of $G$. We denote by $s_\alpha \in W$ the simple reflection associated with $\alpha \in \Phi$ and by $\dot s_\alpha$ a lift of $s_\alpha$ to $G$. 
We also denote by $w_0$ the longest element of $W$. If $I \subset \Phi$ we set $\dot W_I := \langle \dot s_\alpha \mid \alpha \in I \rangle$ and we let $P_I$ be the parabolic subgroup generated by $B$ and $\dot W_I$. The map $I \mapsto P_I$ between subsets of $\Phi$ and closed subgroups of $G$ containing $B$ is bijective.

By \cite[Proposition 2.4]{Pasquier2008:VarHoroFano}, there is a bijection between subgroups $H$ of $G$ containing $U$ and pairs $(M,I)$, where $M$ is a sublattice of the character group $\XX(\tor)$, and $I \subset \Phi$, subject to the property that for any $\alpha \in I$, the associated coroot $\alpha^\vee$ satisfies $\langle m,\alpha^\vee\rangle = 0$ for any $m \in M$. The bijection is constructed as follows.
First consider the normalizer $N_G(H)$ of $H$ in the group $G$. It is a parabolic subgroup $P=P_I$, where $I \subset \Phi$. The subset $I$ constitutes the first part of the data. Now the quotient $T := P/H$ is a torus, and our second datum $M$ is the character lattice of $T$.

\paragraph{{\bf Colors and charts.}} Let $Y$ be a horospherical $G$-variety of arbitrary complexity and $L \subset G$ be a closed subgroup of $G$. An \emph{$L$-divisor} is an $L$-stable prime divisor, and a \emph{color} is a $B$-divisor which is not $G$-stable.

A \emph{$B$-chart} of $Y$ is a dense open $B$-stable affine subset of $Y$. We say that the horospherical variety $Y$ is \emph{simple} if $Y$ possesses a $B$-chart which intersects every $G$-orbit of $Y$. Since $Y$ is normal, it is the reunion of simple $G$-stable open subsets, see \cite[Theorem 1]{Sumihiro:EquivCompletion1} and \cite[Theorem 1.3]{Knop:Luna-Vust}. Moreover every normal simple horospherical $G$-variety is quasi-projective.

\paragraph{{\bf Models.}} Let $Y$ be a variety with a horospherical $G$-action. We say that a $G$-variety $Z$ is a \emph{model} of $Y$ if $Z$ is normal and there exists a $G$-equivariant birational map $Z \dashrightarrow Y$. For instance, any horospherical $G$-variety $Y$ has a model  of the form $C \times G/H$, where $H$ is a horospherical subgroup of $G$, $C$ is a smooth variety, and $G$ acts by left multiplication on $G/H$ and trivially on $C$ (see \cite[Proposition 7.7]{TimashevBook}). This implies that the dimension of $C$ is equal to the complexity of the action of $G$ on $Y$ .  

Let $Y$ be a horospherical $G$-variety with model $Z = C \times G/H$. The variety $Y$ contains a $G$-stable open subset which is identified with $\Gamma \times G/H \subset Z$, where $\Gamma \subset C$ is open dense. Let $(M,I)$ be the lattice/subset pair parameterizing $H$. To any $\alpha \in \Phi \setminus I$ we associate a color by letting $D_\alpha$ denote the set $\overline{\Gamma \times p^{-1}(X_\alpha)} \subset Y$, where $p : G/H \to G/P$ is the natural projection, $P = N_G(H)$, and $X_{\alpha}$ is the corresponding Schubert variety. The map $\alpha \mapsto D_{\alpha}$ between $\Phi \setminus I$ and the set $\F_Y$ of colors of $Y$ is bijective, see \cite[Lemma 3]{FMSS95}. 
We will use the same notation $D_\alpha$ for the colors of $G/H$ and the colors of $Y$.

\subsection{Colored polyhedral divisors}\label{s:complexity-one}

In this subsection we consider horospherical varieties of complexity one, which as we have seen are $G$-equivariantly birational to direct products $Z = C \times G/H$, where $C$ is a smooth projective curve and $H$ is a horospherical subgroup of $G$. Here we introduce some combinatorial data which describes all the models of $Z$. We refer to \cite[Section 16]{TimashevBook}.

Let $(M,I)$ be the pair associated with the horospherical subgroup $H$, and $N$ be the dual lattice $\Hom(M,\ZZ)$. Denote by $M_\QQ$ and $N_\QQ$ the dual associated vector spaces  $\QQ \otimes_\ZZ M$ and $\QQ \otimes_\ZZ N$. The \emph{dual} $\sigma^\vee \subset M_\QQ$ of a polyhedral
cone $\sigma \subset N_\QQ$ is
\[
	\sigma^\vee = \left\{ m \in M_\QQ \mid \forall v \in \sigma,\, \langle m,v \rangle \geq 0 \right\}.
\]
The polyhedral cone $\sigma$ is \emph{strongly convex} (i.e., $\{0\}$ is a face of $\sigma$) if and only if $\sigma^\vee$ generates $M_\QQ$.
\begin{definition}[\cite{AltmannHausen}]\label{d:polyhedral-divisor}
	Let $\sigma$ be a strongly convex polyhedral cone in $N_\QQ$ and $C_0$ be a smooth curve.
	\begin{enumerate}
		\item A \emph{$\sigma$-polyhedral divisor} on $C_0$ is a formal sum
			\[
				\D = \sum_{y \in C_0} \Delta_y \cdot [y],
			\]
			where $\Delta_y \subset N_\QQ$ is a \emph{$\sigma$-polyhedron} (that is, the Minkowski sum of the cone $\sigma$ and a non-empty polytope $\Pi \subset N_\QQ$), with the property that $\Delta_y=\sigma$ for all but finitely many $y \in C_0$. The 	\emph{set of special points} of $\D$ is $\Sp (\D) := \{ y \in C_0 \mid \Delta_y \neq \sigma \}$.
			The cone $\sigma$ is called the \emph{tail} of $\D$ and the curve $C_0$ is its \emph{locus}.
		\item The \emph{degree} of $\D$ is defined, if $C_0$ is complete, as the Minkowski sum $\deg \D := \sum_{y \in C_0} \Delta_y$ inside $N_\QQ$, and as the empty set otherwise.
		\item For $m \in \sigma^\vee$ we define a $\QQ$-divisor on $C_0$, called the \emph{evaluation of $\D$ at $m$,} by setting
			\[
				\D(m) := \sum_{y \in C_0} \min \{ \langle m,v \rangle  \mid v \in \Delta_y \}  \cdot [y].
			\]
	\end{enumerate}
\end{definition}

\begin{definition}\label{d:Malgebra}
\begin{enumerate}
\item For any subsemigroup $S$ of $M$ we let $\CC[S] = \bigoplus_{m \in S} \CC \chi^m$ be the $\CC$-algebra subject to the relations $\chi^m \cdot \chi^{m'} = \chi^{m+m'}$ for all $m,m' \in S$.
\item Let $\D$ be a polyhedral divisor. The \emph{associated $M$-graded algebra} of $\D$ is the subalgebra
			\[
			A(C_0,\D) := \bigoplus_{m \in \sigma^\vee \cap M} H^0(C_0,\OO_{C_0}(\lfloor  \D(m) \rfloor)) \otimes \chi^m \subset \CC(C_0) \otimes_\CC \CC[M],
		\]
		where $\lfloor  \D(m) \rfloor$ is obtained by taking the integral part of the coefficients of $\D(m)$.
	\end{enumerate}
\end{definition}

\begin{definition}
	 A $\sigma$-polyhedral divisor $\D$ is \emph{proper} if either $C_0$ is affine, or it is projective and the two following conditions are satisfied:
		\begin{enumerate}[(i)]
			\item $\deg \D \subsetneq \sigma$;
			\item If $\min \{ \langle m,v \rangle  \mid v \in \deg \D \} = 0$ for some $m \in \sigma^\vee$, then $\D(rm)$ is principal for some $r \in \ZZ_{>0}$.
		\end{enumerate}
	If $\D$ is proper then $A(C_0,\D)$ and $\CC(C_0) \otimes_\CC \CC[M]$ have the same field of fractions.
\end{definition}
The next definition introduces the \emph{coloration map} $\varrho$ of the horospherical homogeneous space $G/H$.
\begin{definition}
	Let $(M,I)$ be the lattice/subset pair associated with the horospherical subgroup $H$. For any color $D$ of $G/H$ there exists a unique root $\alpha \in \Phi$ such that $D = D_\alpha$. In particular, the coroot $\alpha^\vee$ is an element of $\Hom(\XX(\tor),\ZZ)$. Thus its restriction to $M$, $\alpha^\vee_{\mid M} : M \to \ZZ$, is an element of $N = \Hom(M,\ZZ)$, and we define $\varrho(D)$ as $\alpha^\vee_{\mid M}$. 
\end{definition}

\begin{definition}
	A pair $(\D,\F)$ is a \emph{colored $\sigma$-polyhedral divisor} (see~\cite[Section 1.3]{LangloisTerpereau}) on $C_0$ if $\F$ is a subset of $\F_{G/H}$ such that $\varrho(\F) \subset \sigma$ and $0 \not \in \varrho(\F)$, and if $\D$ is a proper $\sigma$-polyhedral divisor on $C_0$.
\end{definition}

We now give a combinatorial description of the simple models of the product $C \times G/H$, which follows from a general classification result for normal $G$-varieties of complexity one, see~\cite[Theorem 3.1]{Timashev:Gmanifolds1997}.

\begin{theorem}\label{t:classification-horo}
	Let $G$ be a connected simply connected reductive algebraic group and $H$ be a horospherical subgroup associated with a lattice/subset pair $(M,I)$. Define $Z:=C \times G/H$, where $C$ is a smooth projective curve.
	\begin{enumerate}[(i)]
		\item Let $(\D,\F)$ be a colored polyhedral divisor on an open dense subset $C_0 \subset C$. Then there exists a simple $G$-model $X(\D) := X(\D,\F)$ of $Z$ containing a $B$-chart which is determined by $(\D,\F)$. The construction of $X(\D)$ will be specified in Equation~\eqref{e:XD}.
		\item Let $X$ be a simple $G$-model of $Z$. Then there exists an open dense subset $C_0 \subset C$, a colored polyhedral divisor $(\D,\F)$ on $C_0$, and a $G$-isomorphism $X \to X(\D)$.
	\end{enumerate}
\end{theorem}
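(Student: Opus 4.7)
The plan is to build $X(\D,\F)$ by parabolic induction from an affine $T$-variety attached to the $M$-graded algebra $A(C_0,\D)$, and to recover $(\D,\F)$ from a given simple model via the theory of $G$-invariant valuations. The whole argument reduces the horospherical situation to the torus-action case of Altmann--Hausen and Timashev by exploiting the parabolic subgroup $P = N_G(H)$ and the quotient torus $T = P/H$ with character lattice $M$.

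For part (i), I would first build a normal affine $T$-variety $Y = \Spec A(C_0,\D)$ when $C_0$ is affine, or in the projective case glue such affine charts obtained by localizing $A(C_0,\D)$ at $m$-homogeneous sections for $m$ in the relative interior of $\sigma^\vee$. The properness hypotheses of Definition~\ref{d:polyhedral-divisor}'s successor guarantee that the resulting graded algebra is finitely generated and integrally closed, with fraction field $\CC(C_0)(M)$; this is where the condition $\deg \D \subsetneq \sigma$ becomes essential. The $T$-action lifts to a $P$-action via the quotient $P \surj T$, so that $H$ acts trivially on $Y$. To incorporate $\F$, I would pass to the $P$-stable open subset $Y_{\F} \subset Y$ obtained by removing the pullbacks of the Schubert-type divisors $D_\alpha$ for $\alpha \in \Phi \setminus (I \cup \F)$, keeping exactly the colors recorded in $\F$. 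Then the contracted product
\[
    X(\D,\F) \;:=\; G \times^{P} Y_{\F}
\]
is a normal $G$-variety, birational to $C \times G/H$, containing the distinguished $B$-chart $B \cdot Y_{\F}$ which meets every $G$-orbit by construction; hence $X(\D,\F)$ is simple.

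For part (ii), let $X$ be a simple $G$-model of $C \times G/H$. By Sumihiro's theorem and simplicity, $X$ carries a $B$-chart $X_0$ meeting every $G$-orbit. Its complement decomposes into colors (which define the set $\F$) and $G$-stable prime divisors. To each $G$-stable prime divisor $E$ I attach a $G$-invariant discrete valuation $\val_E$ of $\CC(X) = \CC(C)(G/H)$; following the complexity-one Luna--Vust--Timashev dictionary, these are classified by pairs $(y,v) \in C \times N_\QQ$ of \emph{vertical} type, where $\val_E$ extends a valuation centered at $y \in C$, or by rays in $N_\QQ$ of \emph{horizontal} type, where $\val_E$ is trivial on $\CC(C)$. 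Assembling the vertical data above each $y$ produces the $\sigma$-polyhedron $\Delta_y$, where $\sigma$ is the cone spanned by the horizontal rays, and $C_0$ is the complement of the finitely many points $y$ with $\Delta_y \neq \sigma$. The main obstacle will be checking that the two constructions are mutually inverse, and in particular that the properness conditions on $\D$ correspond precisely to the normality and separatedness of the resulting $G$-variety in the projective-$C_0$ case; once established, this yields the required bijection and finishes the theorem, as an instance of the general classification of $G$-varieties of complexity one adapted to the horospherical setting.
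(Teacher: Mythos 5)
Your skeleton for part (i) goes wrong precisely at the point where the colours $\F$ have to enter the picture, and the error is structural, not cosmetic. You propose to form the $T$-variety $Y=\Spec A(C_0,\D)$, lift the $T$-action to $P=N_G(H)$, remove from $Y$ the ``pullbacks of the Schubert-type divisors $D_\alpha$ for $\alpha\in\Phi\setminus(I\cup\F)$'', and then take $G\times^P$ of this open subset. But the colours $D_\alpha$ are $B$-divisors in $G/H$ (or in the full variety $X$), not hypersurfaces of the $T$-variety $Y$; there is nothing in $Y$ to remove. More fundamentally, the colored model $X(\D,\F)$ is obtained from the uncolored one $X(\D,\emptyset)=G\times^P Y$ by a proper birational \emph{contraction} (the discoloration morphism $\pi_{\mathrm{dis}}:X(\D,\emptyset)\to X(\D,\F)$ of Equation~\eqref{e:discolor}), not by passing to an open subset. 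An open subset of $G\times^P Y$ can never produce the required variety when $\F\neq\emptyset$, and this is where the construction in the paper takes a different route.

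The correct mechanism is to enlarge the parabolic, not to shrink the fibre. One sets $I_\F=\{\alpha\in\Phi\setminus I\mid D_\alpha\in\F\}\cup I$, lets $P_\F\supset P$ be the corresponding parabolic with Levi $L$, and replaces the torus $T$ by the (in general nonabelian) group $L$: the fibre of the induction is the \emph{affine $L$-variety}
\[
Y(\D,\F)=\Spec A(C_0,\D,\F),\qquad
A(C_0,\D,\F)=\bigoplus_{m\in\sigma^\vee\cap M} H^0\bigl(C_0,\OO_{C_0}(\lfloor\D(m)\rfloor)\bigr)\otimes_\CC V(m),
\]
where $V(m)$ is the irreducible $L$-module of highest weight $m$ (this requires $\varrho(\F)\subset\sigma$, i.e.\ $\sigma^\vee\subset\dw$, precisely the condition imposed on colored polyhedral divisors), and then $X(\D,\F)=G\times^{P_\F}Y(\D,\F)$ as in Equation~\eqref{e:XD}. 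The colours in $\F$ reappear inside $Y(\D,\F)$ as $B_L$-divisors of the quasi-affine horospherical $L$-space $L/H_L$; they are genuinely present in the fibre, rather than removed from it.

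There is also a smaller slip in your sketch of part (ii). You identify $\F$ with the colours appearing in $X\setminus X_0$, but $\F$ is the set of colours containing a $G$-orbit of $X$; since the $B$-chart $X_0$ meets every $G$-orbit, these colours actually \emph{do} intersect $X_0$ and are not part of its complement. This matters because it is exactly those colours that must be incorporated into the fibre of the parabolic induction rather than discarded. The remaining outline of part (ii) --- classify the $G$-invariant valuations attached to $G$-stable divisors via the complexity-one Luna--Vust--Timashev dictionary and assemble the horizontal rays into $\sigma$ and the vertical data into the polyhedra $\Delta_y$ --- matches the paper's strategy, which delegates this to \cite[Theorem 3.1]{Timashev:Gmanifolds1997} and, for $G=T$, to Altmann--Hausen and Timashev.
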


Let us sketch the construction of the $G$-variety $X(\D)$ from item $(i)$. Denote by $P_\F$ the parabolic subgroup of $G$ containing the Borel subgroup $B$ and corresponding to the set of roots
\[
	I_\F = \{ \alpha \in \Phi \setminus I \mid D_\alpha \in \F \} \cup I.
\]
Let $L$ be the Levi subgroup of $P_\F$ containing the maximal torus $\tor$ and $B_L$ be the Borel subgroup of $L$ containing $\tor$, such that $I_\F$ is the set of simple roots of $L$. We also write $H_L:= H \cap L$ and $U_L:=U \cap L$. The subgroup $H_L$ contains $U_L$, hence the homogeneous space $L/H_L$ is horospherical. It is quasi-affine, see~\cite[Corollary 15.6]{TimashevBook}. Note that $L/H_L$ is constructed so that the set of $B$-divisors in $L/H_L$ exactly identifies with $\F$. Moreover the lattice of $B_L$-weights in $\CC(L/H_L)$ is the lattice $M$. 

Denote by $\dw \subset M_\QQ$ the cone generated by the dominant weights of $\CC[L/H_L]$. Recall that the rational $L$-module $\CC[L/H_L]$ admits a $\CC$-algebra graduation 
\[
	\CC[L/H_L] = \bigoplus_{m \in \dw \cap M} V(m),
\]
where $V(m)$ denotes the irreducible rational representation corresponding to $m \in \dw \cap M$ (see~ \cite[Proposition 7.6]{TimashevBook}). 

Since by definition of $(\D,\F)$ we have $\varrho(\F) \subset \sigma$, it follows that $\sigma^\vee \subset \dw$. Thus we may define a subalgebra of $\CC(C) \otimes_\CC \CC[L/H_L]$ by
\begin{equation}\label{e:ACDF}
	A(C_0,\D,\F) := \bigoplus_{m \in \sigma^\vee \cap M} \Am \otimes_\CC V(m)
\end{equation}
Set $Y(\D,\F) := \Spec A(C_0,\D,\F)$. Since $\D$ is proper, $Y(\D,\F)$ is a well-defined affine $L$-model of the product $C \times L/H_L$. 
To conclude, consider the parabolic induction:
\begin{equation}\label{e:XD}
	X(\D) = X(\D,\F) := G \times^{P_\F} Y(\D,\F).
\end{equation}
The notation $G \times^{P_\F} Y(\D,\F)$ means that $X(\D)$ is the quotient $(G \times Y(\D,\F))/P_\F$, where $P_\F$ acts by
	 $p \cdot (g,y) := (gp^{-1},\phi(p) \cdot y)$
for all $p \in P_\F$, $g \in G$, and $y \in Y(\D,\F)$. Here $\phi : P_\F \to L$ is the usual projection. Note that $X(\D)$ is a fiber bundle over $G/P_\F$ with fiber $Y(\D, \F)$.

The original construction of the simple $G$-models $X(\D)$ from \cite{Timashev:Gmanifolds1997} generalized the approach of \cite{Knop:Luna-Vust} in complexity one. Let us recall here how this construction relates to the one explained above.


\begin{definition}
	Let $Y$ be a horospherical $G$-variety. A \emph{locality} of $\CC(Y)$ is a local ring $R_{\p}$, where $\p \subset R$ is a prime ideal in a subalgebra $R$ of $\CC(Y)$ of finite type with fraction field $\CC(Y)$. The set of localities $\Sc(Y)$ is naturally endowed with a $\CC$-scheme structure, and it is called the \emph{scheme of geometric localities}. 
We denote by $\Sc_G(Y) \subset \Sc(Y)$ the maximal normal open subset where the $G$-action (induced by the $G$-action on $\CC(Y)$) is a morphism of $\CC$-schemes; we will call it the \emph{equivariant scheme of geometric localities}. 
\end{definition}

According to the previous definition, a $G$-model of a horospherical $G$-variety $Y$ is nothing but a $G$-stable separated open dense subset of $\Sc_G(Y)$ which is also of finite type over $\CC$.

Now consider a horospherical $G$-variety of the form $Z = C \times G/H$, where $C$ is a smooth projective curve and $H$ is a horospherical subgroup. 

\begin{definition}\label{d:hyperspace}
	The \emph{hyperspace} $\N_\QQ$ is the set $C\times N_\QQ \times \QQ_{\geq 0}$ modulo the equivalence relation
	\[
		(y,\nu,\ell) \sim (y',\nu',\ell') \Leftrightarrow (y=y',\nu=\nu',\ell=\ell') \text{ or } (\ell=\ell'=0,\nu=\nu').
	\] 
	Since an element $(y,\nu,0) \in \N_\QQ$ does not depend on $y \in C$, in the sequel we will denote it by $(C,\nu,0)$. 
	We also write $\N \subset \N_\QQ$ for the subset of integral points
	\[
		\N = \{\, (y,\nu,\ell) \in \N_\QQ \mid \nu \in N,\ell \in \ZZ_{\geq 0} \, \}.
	\]
	Let $\D = \sum_{y \in C_0} \Delta_y \cdot [y]$ be a $\sigma$-polyhedral divisor on an open dense subset $C_0 \subset C$. For $y \in C_0$, the \emph{Cayley cone} $C_y(\D)$ is defined as the cone generated by the subsets $\sigma \times \{0\}$ and $\Delta_y \times \{1\}$ in $N_\QQ \times \QQ_{\geq 0}$. The \emph{hypercone} associated with $\D$ is the subset $C(\D) \subset \N_\QQ$ defined by
	\[
		C(\D) = \left.\raisebox{.2em}{$\bigcup_{y \in C_0} \{ y \} \times C_y(\D)$} \middle/ \raisebox{-.2em}{$\sim$} \right. .
	\]
	We denote by $C(\D)(1)$ the subset of elements $[y,\nu,\ell] \in C(\D) \cap \N$ such that $(\nu,\ell)$ is an integral primitive generator of a one-dimensional face of $C_y(\D)$.
\end{definition}

Each triple $\xi=(y,\nu,\ell) \in \N_\QQ$ induces a unique discrete $G$-invariant valuation $w(\xi) : \CC(Z)^* \to \QQ$, whose restriction to the subalgebra $\CC(C) \otimes_\CC \CC[M] \subset \CC(Z)$ satisfies
\[
	w(\xi)(f\otimes \chi^m) = \ell \ord_y (f) + \langle m,\nu \rangle
\]
for all $f \in \CC(C)^*$ and $m \in M$. Here we identify the algebra $\CC[M]$ with $\CC[B x_0]^U$, where $B x_0$ is the open $B$-orbit in $G/H$. From this description it follows that the set of $G$-invariant discrete valuations of $\CC(Z)$ with values in $\QQ$ is in one-to-one correspondence with the hyperspace $\N_\QQ$ (see~\cite[Corollary 19.13, Theorems 20.3 and 21.10]{TimashevBook}).

Let us now consider the morphism $X(\D) \to G/P_\F$ induced by projecting on the first factor in Equation~\eqref{e:XD}; its fibers are isomorphic to $Y(\D,\F)$. The inverse image of the open $B$-orbit $B y_0$ of $G/P_\F$ is a $B$-chart which we denote by $X_0$. This $B$-chart intersects all the $G$-orbits in $X(\D)$. Note that $B y_0$ is isomorphic to the affine space $\AA_\CC^r$, where $r = \dim G/P_\F$. Since the fibration restricted to $B y_0$ is trivial, the chart $X_0$ is identified with $Y(\D,\F) \times \AA_\CC^r$. The ring of functions on $X_0$ can be described as
\begin{equation}\label{e:coordX0}
	\CC[X_0] = \left(\CC(C) \otimes_\CC \CC[Bx_0] \right) \cap \bigcap_{\xi \in C(\D)(1)} \OO_{v(\xi)} \cap \bigcap_{D \in \F} \OO_{v_D},
\end{equation}
where $v_D$ is the valuation associated with the divisor $D \in \F$ and $\OO_{v_D}$, $\OO_{v(\xi)}$ are the valuation rings respectively of $v_D$ and $v(\xi)$.

\begin{remark}\label{r:BOrbit}
The variety $X(\D)$ is a reunion of $G$-orbits
\begin{equation}\label{e:orbitsX0}
	X(\D) = G \cdot X_0 = \{ g \cdot x \mid g \in G, x \in X_0 \} \subset \Sc_G(Z).
\end{equation}
Thus Equations~\eqref{e:coordX0} and \eqref{e:orbitsX0} yield another more conceptual construction of the $G$-variety $X(\D)$, see~\cite[Section 16]{TimashevBook}. From now on we identify $X(\D)$ with an open subset of $\Sc_G(Z)$.
\end{remark}
\subsection{Colored divisorial fans}\label{s:colored-fans}

Here we introduce the combinatorial objects describing horospherical $G$-varieties of complexity one. The idea is to consider `fans of colored polyhedral divisors' in order to describe these varieties as a gluing of simple $G$-models. 

\begin{definition}[{\cite[\S 1.3.4]{LangloisTerpereau}}]
	Let $C$ be a smooth projective curve and $G/H$ be a horospherical homogeneous space. A \emph{colored divisorial fan} on the pair $(C,G/H)$ is a finite set $\DF=\{ (\D^i,\F^i) \mid i \in J \}$ of colored polyhedral divisors, where 
	\[
		\D^i = \sum_{y \in C^{i}} \Delta_y^i \cdot [y].
	\]
	Here $C^{i}$ is an open dense subset of $C$, $\F^i \subset \F_{G/H}$, and $\D^i,\F^i$ are subject to the following conditions:
	\begin{enumerate}[(i)]
		\item For all $i,j \in J$, we have $(\D^i \cap \D^j,\F^i \cap \F^j) \in \DF$, where the intersection is given by
			\[
				\D^i \cap \D^j := \sum_{y \in C_{i,j}} (\Delta_y^i \cap \Delta_y^j) \cdot [y],
			\]	
			and the curve $C_{i,j}$ by the equality
				$C_{i,j} := \{ y \in C^{i} \cap C^{j} \mid \Delta_y^i \cap \Delta_y^j \neq \emptyset \}$.
		\item For all $i,j \in J$, $y \in C_{i,j}$, the polyhedron $\Delta_y^i \cap \Delta_y^j$ is a common face to $\Delta_y^i$ and $\Delta_y^j$.
		\item For all $i,j \in J$ we have
			\[
				\F^i \cap \F^j = \varrho^{-1}(\sigma_i \cap \sigma_j) \cap \F^i = \varrho^{-1}(\sigma_i \cap \sigma_j) \cap \F^j,
			\]	
			where $\sigma_i$, $\sigma_j$ are the respective tails of $\D^i$, $\D^j$.
			\item The intersection of the degree of $\D^{i}$ with the tail $\sigma_{ij}$ of $\D^{i}\cap \D^{j}$
is equal to the intersection of the degree of $\D^{j}$ with $\sigma_{ij}$.
	\end{enumerate}
\end{definition}
To any colored divisorial fan $\DF$ one can attach some combinatorial objects as follows.

\begin{definition}\label{d:specialpoints}
The set
\[
	H(\DF) := \{ (C(\D),\F) \mid (\D,\F) \in \DF \}
\]
is the associated \emph{colored hyperfan} (see \cite[Definition 16.18]{TimashevBook}). The \emph{support} of $\DF$ is defined as
\[
	|\DF| := \bigcup_{(\D,\F) \in \DF} C(\D) \subset \N_\QQ
\]
and the \emph{set of special points} as
\[
\Sp(\DF) := \bigcup_{(\D,\F) \in \DF} \Sp(\D).
\]
We refer to \ref{d:polyhedral-divisor} for the definition of $\Sp(\D)$. 
The \emph{tail fan} of $\DF$ is the fan $\Sigma(\DF)$ generated by all the tails $\sigma$ of polyhedral divisors $\D$ with $(\D,\F) \in \DF$.
\end{definition}

When $G=T$ is a torus, we write $\DF = \{ \D^i \mid i \in J \}$ instead of $\DF = \{ (\D^i,\emptyset) \mid i \in J \}$, and we recover the usual divisorial fans of \cite[Definition 5.2]{AHS}. The following result provides a full description of horospherical varieties of complexity one.

\begin{theorem}[{\cite[Theorems 12.13 and 16.19]{TimashevBook}}]
	Let $C$ be a smooth projective curve, $G/H$ be a horospherical homogeneous space, and $Z$ be the product $C \times G/H$. Denote by $\DF$ a colored divisorial fan on $(C,G/H)$. Then the open reunion
	\[
		X(\DF) := \bigcup_{(\D,\F) \in \DF} X(\D,\F)
	\]
	inside the equivariant scheme of geometric localities $\Sc_G(Z)$ is a $G$-model of $Z$. Conversely, every $G$-model of $Z$ arises in this way. Moreover $X(\DF)$ is a complete variety if and only if $|\DF| = \N_\QQ$.
\end{theorem}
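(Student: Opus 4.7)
The plan is to split the proof into three tasks: (a) show that the set $X(\DF)$, viewed as a subset of the equivariant scheme of geometric localities $\Sc_G(Z)$, is a $G$-model of $Z$; (b) show that every $G$-model of $Z$ arises in this form; (c) establish the completeness criterion. I would treat each task separately, leaning on the classification of simple $G$-models from Theorem~\ref{t:classification-horo} together with the bijection between $G$-invariant $\QQ$-valued discrete valuations of $\CC(Z)$ and the hyperspace $\N_\QQ$.

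For task (a), I first note that each $X(\D^i,\F^i)$ embeds as an open subvariety of $\Sc_G(Z)$ thanks to~\eqref{e:orbitsX0}. Since $\DF$ is finite, $X(\DF)$ is of finite type, so the main point is separatedness. The key claim is that inside $\Sc_G(Z)$ one has the identification
\[
	X(\D^i,\F^i) \cap X(\D^j,\F^j) = X(\D^i \cap \D^j,\F^i \cap \F^j),
\]
which I would prove by comparing the $B$-chart rings via~\eqref{e:coordX0}: the hypercone of the intersection equals the intersection of the hypercones precisely under conditions (i)--(iv) in the definition of a colored divisorial fan. Condition (i) ensures that the right-hand side is itself an element of $\DF$; (ii) translates the face relation on polyhedra into the fact that the intersection is an open affine subvariety of each piece; (iii) forces the colors to agree on the overlap; and (iv) is what is needed for the properness of the polyhedral divisor to survive on the intersection when its locus is projective. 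Once this identification is in place, the diagonal of $X(\DF) \times X(\DF)$ is the union of the closures of the diagonals of the affine intersections, hence closed, and $X(\DF)$ is separated. Normality follows piece by piece from the normality of each simple $X(\D^i,\F^i)$.

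For task (b), given a $G$-model $X$ of $Z$, I would apply Sumihiro's theorem \cite[Theorem 1]{Sumihiro:EquivCompletion1} to cover $X$ by finitely many simple $G$-stable open subvarieties $X_i$, then use Theorem~\ref{t:classification-horo} to attach to each $X_i$ a colored polyhedral divisor $(\D^i,\F^i)$. The family $\DF=\{(\D^i,\F^i)\}$ enlarged by all pairwise intersections $X_i \cap X_j$ (themselves simple $G$-stable open subvarieties, and hence of the form $X(\D^{ij},\F^{ij})$) satisfies conditions (i)--(iv) by reversing the identification in task~(a). The resulting $X(\DF)$ is then canonically isomorphic to $X$.

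For task (c), I would use the valuative criterion of completeness together with the fact that for $G$-horospherical varieties it suffices to check $G$-invariant valuations (see \cite[\S 15]{TimashevBook}). Every point $\xi=(y,\nu,\ell) \in \N_\QQ$ determines a unique $G$-invariant valuation $w(\xi)$ of $\CC(Z)$, and from~\eqref{e:coordX0} together with the definition of the Cayley cone one checks that $w(\xi)$ has a (necessarily unique) center on $X(\D,\F)$ if and only if $\xi \in C(\D)$. Hence $X(\DF)$ receives a center from every $G$-invariant valuation exactly when $|\DF|=\N_\QQ$, yielding the completeness criterion. The main obstacle is the combinatorial verification in task~(a), particularly pinning down how condition (iv) is the precise compatibility required for two proper polyhedral divisors with projective locus to glue; the bookkeeping around the properness condition is where most of the technical work accumulates.
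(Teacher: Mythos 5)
The paper does not give its own proof of this theorem: it is cited directly from \cite[Theorems 12.13 and 16.19]{TimashevBook} (and, for the torus case, \cite{AHS}). With that caveat, your three-part plan --- gluing of simple models controlled by the fan conditions, the converse via Sumihiro together with the classification of simple models, and completeness via the valuative criterion restricted to $G$-invariant valuations --- matches the structure of the argument in those references.

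There is, however, a real flaw in the way you argue separatedness. The sentence ``the diagonal of $X(\DF)\times X(\DF)$ is the union of the closures of the diagonals of the affine intersections'' is not a valid inference: the diagonal is not such a union, and in any case the simple models $X(\D^i,\F^i)$ are quasi-projective rather than affine. What one must do is pass to the $B$-charts $X_0^i$ (which \emph{are} affine) and invoke the usual criterion that a scheme covered by affine opens $U_i$ is separated if and only if each $U_i\cap U_j$ is affine and $\OO(U_i)\otimes\OO(U_j)\to\OO(U_i\cap U_j)$ is surjective --- that is where conditions (i)--(iv) actually get used; alternatively, closer to the Luna--Vust/Timashev presentation, use the characterization that an open dense $G$-stable subscheme of $\Sc_G(Z)$ is separated if and only if every $G$-invariant valuation of $\CC(Z)$ admits at most one center on it. Your reading of condition (iv) as needed ``for the properness of the polyhedral divisor to survive on the intersection'' also misses its actual function: it is precisely what forces the set-theoretic intersection $X(\D^i,\F^i)\cap X(\D^j,\F^j)$ inside $\Sc_G(Z)$ to coincide with $X(\D^i\cap\D^j,\F^i\cap\F^j)$, rather than merely receive a dominant non-open map from it. With these two points repaired, the remainder of your outline --- in particular tasks (b) and (c), which are stated essentially correctly --- gives a proof along the cited lines.
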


Let us now recall the construction of the discoloration morphism associated with a colored divisorial fan from~\cite[Section 2.2]{LangloisTerpereau}.
This construction will allow us to provide specific equivariant desingularizations of $X(\DF)$. If $\DF$ is a colored divisorial fan, we denote by 
\[
	\DFd := \{ (\D,\emptyset) \mid (\D,\F) \in \DF \}
\]
its \emph{discoloration}, that is the colored divisorial fan obtained from $\DF$
by removing the colors. For any $(\D,\F)$ in $\DF$ (respectively $(\D,\emptyset)$ in $\DFd$) we consider $X_0$ (respectively $X_{dis}$) the $B$-chart associated with $(\D,\F)$ (respectively with $(\D,\emptyset$)). The inclusions of $\CC$-algebras $\CC[X_0] \subset \CC[X_{dis}]$ induce morphisms $X(\D,\emptyset) \to X(\D,\F)$ which glue into a birational proper $G$-equivariant morphism
\begin{equation}\label{e:discolor}
	\pi_{dis} : X(\DFd) \to X(\DF).
\end{equation}
Writing $P = P_I = N_G(H)$, $T=P/H$ and $M=\XX(T)$, we define the $T$-variety associated with $\DF$:
\begin{equation}\label{e:V(E)}
	V(\DF) := \bigcup_{i \in J} \Spec A(C^{i},\D^i) \subset \Sc_T(C \times T),
\end{equation}
where $\Sc_T(C \times T)$ is the $T$-equivariant scheme of geometric localities of $C \times T$, and $A(C^{i},\D^i)$ denotes the $M$-graded algebra
associated with $\D^i$, see Definition \ref{d:Malgebra}.
From \cite[Proposition 2.9]{LangloisTerpereau} it follows that $X(\DFd)$ is $G$-isomorphic to $G \times^{P} V(\DF)$, where $P$ acts on $V(\DF)$ via the canonical surjection $P \to T$.

\section{The arc space of a horospherical variety of complexity one}\label{s:arc-space}
In this section we describe the arc space of a horospherical variety $X$ of complexity one, in order to compute its stringy $E$-function via motivic integration in Section~\ref{s:E-function}.

Let $K$ be a field extension of $\CC$ and $\O:=\O_K$, $\K:=\K_K$ be the corresponding ring of power series and its fraction field. Denote by $0$ and $\eta$ the closed and generic point of $\Spec \O_K$, respectively. Our goal is to describe the set of $K$-valued points of $\L(X)$ -- that is, the set $X(\O_K)$. To do this we restrict ourselves to the study of a subset $\L_\Gamma(X)$ of $\L(X)$ which has the same motivic measure as $\L(X)$, and we decompose it into \emph{horizontal} and \emph{vertical} arcs, see~\ref{s:hor-ver}, \ref{s:T-non-affine} for the case of $T$-varieties of complexity one. We give a parametrization of the $G(\O)$-orbits in Theorem~\ref{t:arc-horo}. Finally, in~\ref{s:motivic-horo} we show that the corresponding pieces in $\L_{\Gamma}(X)$ are cylinders, hence measurable, and we compute their motivic measure in Theorem~\ref{t:volume}.

\begin{notation}\label{n:setting-arc}
We keep the same notations $M, N, C, G/H, \DF,$ ... as in the previous section for describing a horospherical variety $X = X(\DF)$ of complexity one.  
Especially, for the colored divisorial fan $\DF = \{(\D^i,\F^i)\}_{i \in J}$, and for any $i \in J$, we write
\[
	\D^i = \sum_{y \in C^i} \Delta^i_y \cdot [y],
\]
where $C^i \subset C$ is open dense. The symbol $\sigma_i$ denotes the tail of $\D^i$, and $\Gamma$ is an open dense affine subset of $C$ which does not intersect $\Sp(\DF)$. 
\end{notation}

\subsection{Horizontal and vertical arcs}\label{s:hor-ver}

In this subsection we also assume that $G=T$ is a torus, that $H=\{ e \}$ is trivial, and that $\DF$ consists in a single colored divisor $\D$ with tail $\sigma$ and \emph{affine} locus $C_0$ containing $\Gamma$. The construction following Theorem~\ref{t:classification-horo} shows that $X$ is the $T$-variety $\Spec A(C_0,\D)$.

Let $X_\sigma$ be the toric variety $\Spec \CC[\sigma^\vee \cap M]$ associated with the cone $\sigma$. The product $\Gamma \times X_\sigma$ is a dense open $T$-stable subset of $X$, and so is its subset $\Gamma \times T$. Define
\[
	X_\Gamma = X_\Gamma^K := X(\O) \cap (\Gamma \times T)(\K) := \{ \alpha : \Spec \O_K \to X \mid \alpha(\eta) \in \Gamma \times T \} \subset X(\O),
\]
where $X(\O)$ and $(\Gamma \times T)(\K)$ are both viewed as subsets of $X(\K)$.

\begin{remark}\label{r:generic}
	The subspace 
	\[
		\L_\Gamma(X) := \{ \alpha \in \L(X) \mid \tilde{\alpha}(\eta) \in \Gamma \times T \}
	\]
	has the same motivic measure as $\L(X)$. Indeed, its complement in $\L(X)$ identifies with the arc space of $X_1 := X \setminus (\Gamma \times T)$, which has zero measure, see Remark~\ref{r:measure-closed}. Hence for purposes of motivic integration we may study $\L_\Gamma(X)$ instead of the whole arc space $\L(X)$. Clearly $\L_\Gamma(X)(K) = X_\Gamma^K$.
\end{remark}

Any $K$-valued arc $\alpha$ in $\L_\Gamma(X)(K)$ induces morphisms $\CC[M] \to \K$ and $\CC[\Gamma] \to \K$. Indeed, as $\Gamma \times T$ is an open subset of $X$ we have a commutative diagram
\begin{center}
	\begin{tikzpicture}[description/.style={fill=white,inner sep=2pt}]
		\matrix (m) [matrix of math nodes, row sep=3em,column sep=2.5em, text height=1.5ex, text depth=0.25ex]
			{ \CC[\Gamma] \otimes_\CC \CC[M] &  \K \\
 				\CC[X] & \O. \\};

		\path[-,font=\scriptsize]
			(m-2-1) edge[right hook->] (m-1-1)
				edge[->] node[auto] {$\alpha^*$} (m-2-2)
			(m-1-1) edge[->] node[auto] {$\bar{\alpha}^*$} (m-1-2)
			(m-2-2) edge[right hook->] (m-1-2);
	\end{tikzpicture}
\end{center}
We denote by $\alphaG$ (resp. by $\alphaM$) the restriction of $\bar{\alpha}^*$ to $\CC[\Gamma]$ (resp. to $\CC[M]$). Clearly $\alpha$ is uniquely determined by the data of $\alphaG$ and $\alphaM$. Conversely, if we are given two $\CC$-algebra morphisms $\beta_\Gamma : \CC[\Gamma] \to \K$ and $\beta_M : \CC[M] \to \K$ such that $\beta_\Gamma \otimes \beta_M (\CC[X]) \subset \O$, then there exists an arc $\alpha \in X_\Gamma$ such that $\alphaG = \beta_\Gamma$ and $\alphaM = \beta_M$.

\begin{remark}\label{r:data-arc}
	Following~\cite[Proof of Theorem 4.1]{Ishii:ArcSpaceToric}, if $\alpha \in X_\Gamma$ we parametrize the map $\alphaM$ using the group homomorphisms $\nu_\alpha : M \to \ZZ$ and $\omega_\alpha : M \to \O^*$ defined by the equality $\alphaM(\chi^m) = t^{\langle m,\nu_\alpha \rangle} \omega_\alpha(m)$ for any $m \in M$. We also parametrize $\alphaG$ using the valuation $v_\alpha : \CC[\Gamma] \to \ZZ$ and the function $\qfun_\alpha : \CC[\Gamma] \to \O^*$ such that $\alphaG(f) = t^{v_\alpha(f)} \qfun_\alpha(f)$. Since $\CC(\Gamma)$ is of transcendence degree one over $\CC$, there exists unique $\ell_\alpha \in \ZZ_{\geq 0}$ and $y_\alpha \in \overline{\Gamma}=C$ such that $v_\alpha = \ell_\alpha \ord_{y_\alpha}$.
\end{remark}

The next lemma is a direct consequence of the classification of invariant valuations in \cite[Section 16]{TimashevBook} for toric varieties of complexity one.

\begin{lemma}\label{l:arc2hyperspace}
	Let $w : \CC(X)^* \to \QQ$ be a $T$-invariant discrete valuation. The following statements are equivalent. 
	\begin{enumerate}[(i)]
		\item The restriction $w\mid_{ \CC[X]\setminus \{0\}}$ is non-negative.
		\item There exists $(y,\nu,\ell) \in C(\D)$ such that for any homogeneous element $f \otimes \chi^m$ in $\CC[X]$, we have 
			\[
				w(f \otimes \chi^m) = \ell \ord_y(f) + \langle m,\nu \rangle.
			\]
	\end{enumerate}
	Hence if $\alpha \in X_\Gamma$ is an arc, then the triple $(y_\alpha,\nu_\alpha,\ell_\alpha)$ defined above is a point of the hypercone $C(\D) \cap \N$. 
\end{lemma}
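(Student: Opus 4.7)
The plan is to exploit the description of $T$-invariant discrete $\QQ$-valuations on $\CC(X)^\star=\operatorname{Frac}(\CC(C_0)\otimes_\CC\CC[M])$ in terms of the hyperspace $\N_\QQ$ (as recalled after Definition~\ref{d:hyperspace} via \cite[Cor.~19.13, Thm.~20.3 and 21.10]{TimashevBook}). Any such $w$ is determined by its restrictions to $\CC[M]$ and to $\CC(X)^T=\CC(C_0)$. The first restriction is a group homomorphism $m\mapsto\langle m,\nu\rangle$ with $\nu\in N_\QQ$, and the second is a $\QQ$-valued discrete valuation of the function field of the smooth projective curve $C$, hence of the form $\ell\,\ord_y$ for a unique $y\in C$ and $\ell\in\QQ_{\geq 0}$ (with $y$ irrelevant modulo the equivalence relation on $\N_\QQ$ when $\ell=0$). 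One therefore has
\[
w(f\otimes\chi^m)=\ell\,\ord_y(f)+\langle m,\nu\rangle
\]
on every homogeneous element, so the content of the lemma is to show that the triple $(y,\nu,\ell)$ lies in $C(\D)$ if and only if $w$ is non-negative on $A(C_0,\D)\setminus\{0\}$.

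The implication (ii)$\Rightarrow$(i) I would do by direct calculation. A homogeneous generator $f\otimes\chi^m$ of $A(C_0,\D)$ satisfies $m\in\sigma^\vee\cap M$ and $f\in H^0(C_0,\lfloor\D(m)\rfloor)$, so $\ord_y(f)\geq -\D(m)(y)=-\min_{v\in\Delta_y}\langle m,v\rangle$. If $\ell=0$, the hypothesis $[C,\nu,0]\in C(\D)$ gives $\nu\in\sigma$, hence $\langle m,\nu\rangle\geq 0$. If $\ell>0$, the Cayley-cone description $(\nu,\ell)\in C_y(\D)=\Cone(\sigma\times\{0\},\Delta_y\times\{1\})$ forces $\nu/\ell\in\Delta_y+\sigma=\Delta_y$, hence $\langle m,\nu\rangle\geq\ell\min_{v\in\Delta_y}\langle m,v\rangle$. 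Adding the two inequalities gives $w(f\otimes\chi^m)\geq 0$.

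For (i)$\Rightarrow$(ii) I would argue as follows. First, non-negativity of $w$ on the subalgebra $\CC[C_0]\subset A(C_0,\D)$ of $T$-invariants prevents $\ell\ord_y$ from being negative on any regular function of $C_0$, so if $\ell>0$ then $y\in C_0$ (when $\ell=0$ the class $[y,\nu,0]$ does not depend on $y$). For the radial condition in the case $\ell>0$, fix $m\in\sigma^\vee\cap M$ and pick $k\in\ZZ_{>0}$ large enough that $k\D(m)$ is integral; using that $C_0$ is smooth affine one finds a section $f\in H^0(C_0,k\lfloor\D(m)\rfloor)$ whose order at $y$ attains the minimal value $-k\D(m)(y)$ (a local uniformizer at $y$ in $\CC[C_0]$ combined with global functions controlling orders at the other points of $\Supp k\D(m)$ provides this). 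Then the inequality $0\leq w(f\otimes\chi^{km})=-k\ell\,\D(m)(y)+k\langle m,\nu\rangle$ yields $\langle m,\nu/\ell\rangle\geq\min_{v\in\Delta_y}\langle m,v\rangle$; since this holds for every $m\in\sigma^\vee\cap M$, the support-function characterisation of $\sigma$-polyhedra gives $\nu/\ell\in\Delta_y$, i.e.\ $(\nu,\ell)\in C_y(\D)$. The case $\ell=0$ is analogous but simpler.

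To deduce the statement about arcs, observe that an arc $\alpha\in X_\Gamma$ extends via the generic point to a ring morphism $\bar\alpha^\star:\CC(X)\to\K$; composing with $\ord_0$ on $\O$ defines a $T$-invariant valuation $w_\alpha$ on $\CC(X)^\star$ whose values $w_\alpha(f\otimes\chi^m)=\ell_\alpha\ord_{y_\alpha}(f)+\langle m,\nu_\alpha\rangle$ come from the data of Remark~\ref{r:data-arc} and lie in $\ZZ$. Since $\alpha^\star$ sends $\CC[X]$ into $\O$, this $w_\alpha$ is non-negative on $\CC[X]\setminus\{0\}$, so the equivalence forces $(y_\alpha,\nu_\alpha,\ell_\alpha)\in C(\D)$; integrality is automatic, whence the triple is in $C(\D)\cap\N$. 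The main technical obstacle is the existence of the section $f$ of prescribed minimal order at $y$ in the proof of (i)$\Rightarrow$(ii); this is precisely where the affineness of the locus $C_0$ enters essentially.
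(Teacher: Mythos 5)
Your proof is correct in substance but takes a genuinely different route from the paper's for the implication $(i)\Rightarrow(ii)$. The paper argues by contradiction: assuming $\nu\notin\Delta_y$, it defines an enlarged $\sigma$-polyhedral divisor $\D'$ with $\Delta'_y = \Conv(\{\nu\}\cup\Delta_y(0))+\sigma$, observes from the hypothesis $(i)$ that $A(C_0\cup\{y\},\D')=A(C_0,\D)$, and then invokes the rigidity result~\cite[Lemma 9.1]{AltmannHausen} (a proper polyhedral divisor is uniquely recovered from its algebra) to conclude $\D=\D'$, a contradiction. Your proof instead constructs, for each $m\in\sigma^\vee\cap M$, an explicit section $f\otimes\chi^{km}$ of the algebra whose order at $y$ achieves the minimal value, and derives $\langle m,\nu/\ell\rangle\ge h_{\Delta_y}(m)$ for all $m$, concluding via the support-function duality for $\sigma$-polyhedra. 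Your route is more elementary and makes the role of affineness of the locus $C_0$ transparent (global generation of $\OO_{C_0}(E)$), whereas the paper's argument is shorter but imports a nontrivial uniqueness statement. One small notational slip: to have $\ord_y(f)$ reach exactly $-k\D(m)(y)$ you must take $f\in H^0\bigl(C_0,\lfloor k\D(m)\rfloor\bigr)=H^0(C_0,k\D(m))$, not $H^0\bigl(C_0,k\lfloor\D(m)\rfloor\bigr)$; since $k\lfloor\D(m)\rfloor\le\lfloor k\D(m)\rfloor$, the space you wrote is a priori smaller and may miss the section of minimal order at $y$. With this corrected, and with the (routine, unwritten) density argument extending the inequality from lattice points of $\sigma^\vee$ to all of $\sigma^\vee$, your proof is complete. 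You also handle the $\ell=0$ case explicitly, which the paper's WLOG normalisation to $w=\ord_y+\nu$ silently leaves aside.
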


The following disjoint subsets of $X_\Gamma$ are defined according to the position of the image of $0 \in \Spec \O$.
\begin{definition}
	Define the \emph{horizontal part} of $X_\Gamma$ by
	\[
		\hor := \{ \alpha : \Spec \O \to X \mid \alpha(0) \in \Gamma \times X_\sigma \text{ and } \alpha(\eta) \in \Gamma \times T \}
	\]
	and its \emph{vertical part} by $\ver:=X_\Gamma \setminus \hor$, that is
	\[
		\ver = \{ \alpha : \Spec \O \to X \mid \alpha(0) \not\in \Gamma \times 	X_\sigma \text{ and } \alpha(\eta) \in \Gamma \times T \}.
	\]
	The condition $\alpha(0) \not\in \Gamma \times 	X_\sigma$ means that the closed point $\alpha(0)$ belongs to the fiber over a point of $C_0 \setminus \Gamma$ of the quotient map $X \to C_0$.
\end{definition}

We may characterize elements $\alpha \in \ver$ in terms of their associated maps $\alphaG$.
\begin{lemma}\label{l:arc-injective}
	Let $\alpha \in X_\Gamma$ be an arc. The following properties are equivalent:
	\begin{enumerate}
		\item $\alpha \in \ver$,
		\item $\alphaG(\CC[\Gamma]) \not \subset \O$,
		\item $y_\alpha \in C_0 \setminus \Gamma$ and $\ell_\alpha \neq 0$,
	\end{enumerate}
	where $y_\alpha$ and $\ell_\alpha$ are as in Remark~\ref{r:data-arc}. In particular, if one of the above conditions is satisfied, then $\alphaG$ is injective.
\end{lemma}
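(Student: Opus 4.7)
The plan is to establish the chain of equivalences $\alpha \in \ver \iff \alphaG(\CC[\Gamma]) \not\subset \O \iff (y_\alpha \in C_0 \setminus \Gamma \text{ and } \ell_\alpha \neq 0)$, and then read off the injectivity of $\alphaG$ from the analysis of the second equivalence. The geometric input I would rely on throughout is that, since $\Gamma$ contains no special point of $\D$, one has $\Delta_y = \sigma$ for every $y \in \Gamma$, whence the good quotient $\pi_X \colon X \to C_0$ restricts over $\Gamma$ to the trivial toric bundle: concretely $A(C_0,\D) \otimes_{\CC[C_0]} \CC[\Gamma] = \CC[\Gamma] \otimes_{\CC} \CC[\sigma^\vee \cap M]$, so $\pi_X^{-1}(\Gamma) = \Gamma \times X_\sigma$ as open subschemes of $X$.

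With this identification, $\alpha \in \hor$ becomes equivalent to the morphism $\alpha \colon \Spec \O \to X$ factoring through $\Gamma \times X_\sigma$: the preimage of this open under $\alpha$ is open in $\Spec \O$ and already contains the generic point by the assumption $\alpha \in X_\Gamma$, so the closed point lies in it precisely when the whole morphism does. This in turn is equivalent to $\pi_X \circ \alpha$ factoring through $\Gamma \hookrightarrow C_0$, and since $\CC[\Gamma]$ is a localization of $\CC[C_0]$, this last condition amounts to the already-defined map $\alphaG \colon \CC[\Gamma] \to \K$ landing in $\O$. Taking contrapositives yields the first equivalence.

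For the second equivalence I would first observe that $\alphaG(\CC[\Gamma]) \not\subset \O$ forces $\alphaG$ to be injective, which is the final assertion of the lemma. Indeed, $\CC[\Gamma]$ is a one-dimensional domain, so a non-trivial $\ker \alphaG$ would be maximal, making $\alphaG$ factor through a residue field $\CC[\Gamma]/\mathfrak{m} = \CC$ and giving $\alphaG(\CC[\Gamma]) \subset \CC \subset \O$, a contradiction. Under injectivity, $\alphaG$ extends to $\CC(C) \to \K$, and pulling back the $t$-adic valuation yields $v_\alpha = \ell_\alpha \ord_{y_\alpha}$ on $\CC(C)$ with $\ell_\alpha \geq 0$. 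Then $\alphaG(\CC[\Gamma]) \not\subset \O$ is equivalent to the existence of $f \in \CC[\Gamma]$ with $v_\alpha(f) < 0$, which forces $\ell_\alpha > 0$ and $\ord_{y_\alpha}(f) < 0$; since $f$ is regular on $\Gamma$, this gives $y_\alpha \notin \Gamma$.

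It remains to check $y_\alpha \in C_0$ and the converse implication. For the former, $\alpha^\star$ maps $\CC[C_0] \subset \CC[X]$ into $\O$, so $v_\alpha \geq 0$ on $\CC[C_0]$; if $y_\alpha$ lay in $C \setminus C_0$ then, $C_0$ being affine, the local ring $\OO_{C,y_\alpha}$ would fail to contain $\CC[C_0]$, producing an $f \in \CC[C_0]$ with $v_\alpha(f) < 0$, a contradiction. For the converse direction, given $y_\alpha \in C_0 \setminus \Gamma$ and $\ell_\alpha > 0$, the same argument applied to the affine curve $\Gamma$ supplies $f \in \CC[\Gamma]$ with $\ord_{y_\alpha}(f) < 0$, hence $\alphaG(f) \notin \O$. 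The main subtlety is the fiber identification $\pi_X^{-1}(\Gamma) = \Gamma \times X_\sigma$, which is what lets one cleanly translate the scheme-theoretic definition of $\ver$ into the algebraic condition on $\alphaG$.
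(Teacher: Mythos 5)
Your proof is correct and follows essentially the same strategy as the paper's: both rest on the identification of $\Gamma\times X_\sigma$ as the preimage of $\Gamma$ under the good quotient $X\to C_0$ (so that $\alpha\in\hor$ iff $\alpha$ factors through it, iff $\alphaG(\CC[\Gamma])\subset\O$), and then on the standard analysis of the valuation $v_\alpha=\ell_\alpha\ord_{y_\alpha}$ to get the third condition. The paper organizes the steps in the opposite order (treating the second equivalence as ``clear'' and then proving $\alpha\in\ver\iff(y_\alpha\in C_0\setminus\Gamma,\ \ell_\alpha\neq0)$ directly), and your injectivity argument is the contrapositive of the paper's, but the content is the same; one small remark is that the phrase ``$\CC[\Gamma]$ is a localization of $\CC[C_0]$'' isn't literally needed (and isn't always true as a multiplicative-set localization for a Dedekind domain)---what the step really uses is that the open immersion $\Gamma\hookrightarrow C_0$ together with the uniqueness of extensions of $\CC[C_0]\to\O$ into $\K$ identifies the factorization condition with $\alphaG(\CC[\Gamma])\subset\O$, which is exactly what you go on to argue.
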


\begin{proof}
	The equivalence between (2) and (3) is clear. Let us now prove that (1) and (3) are equivalent.
	
	If $y_\alpha \in \Gamma$, then $\nu_\alpha \in \sigma \cap N$ by definition of $C(\D)$. Moreover $\ord_{y_\alpha}$ is non-negative on $\CC[\Gamma]$, the image of 
	$\CC[\Gamma] \otimes_\CC \CC[X_{\sigma}]$ by $\alphaG \otimes \alphaM$ is contained in $\O$, hence $\alpha \in \hor$. So $\alpha \in \ver$ implies $y_\alpha \in C_0 \setminus \Gamma$. Conversely if $y_\alpha \in C_0 \setminus \Gamma$ and $\ell_\alpha \neq 0$, then there exists $f \in \CC[\Gamma]$ such that $\ell_\alpha \ord_{y_\alpha}(f) < 0$, so that $\alphaG(\CC[\Gamma]) \not \subset \O$. In particular $\alphaG \otimes \alphaM(\CC[\Gamma] \otimes_\CC \CC[X_\sigma]) \not \subset \O$, and $\alpha \in \ver$.
	
	Now assuming $\alphaG$ is not injective, its kernel is the ideal of a $\CC$-point, so $\ell_\alpha=0$ and $\nu_\alpha \in \sigma$. This implies that $\alphaG \otimes \alphaM(\CC[\Gamma] \otimes_\CC \CC[X_{\sigma}]) \subset \O$, which means that $\alpha \in \hor$.
\end{proof}

Let us now study the $T(\O)$-action on the space $X_\Gamma$. Consider
\[
	\val : X_\Gamma \to C(\D) \cap \N,
\]
which to an arc $\alpha$ associates the point $(y_\alpha,\nu_\alpha,\ell_\alpha)$ from Lemma~\ref{l:arc2hyperspace}. Our goal is to decompose into $T(\O)$-orbits the fibers of $\val$ above points of $\hor$ and $\ver$. In the case of $\hor$ the result is immediate from \cite[Theorem 4.1]{Ishii:ArcSpaceToric}.

\begin{lemma}\label{l:decomposition-hor}
	We have a decomposition
	\[
		\hor 	= \Gamma(\O) \times (X_\sigma(\O) \cap T(\K)) = \Gamma(\O) \times \bigsqcup_{\nu \in \sigma \cap N} \C_\nu,
	\]
	where $\alpha \in \hor$ is identified  with the pair $(\alphaG,\alphaM) \in \Gamma(\O) \times (X_\sigma(\O) \cap T(\K))$, and
	\[
		\C_\nu := \{ \alphaM \mid \alpha \in \hor, \nu_\alpha = \nu \}
	\]
	is a $T(\O)$-orbit of $X_\sigma(\O) \cap T(\K)$. Moreover
	\[
		\val(\hor) = \{ (C,\nu,0) \mid \nu \in \sigma \cap N \}
	\]
	and $\val^{-1}(C,\nu,0) = \Gamma(\O) \times \C_\nu$ for any $\nu \in \sigma \cap N$.
\end{lemma}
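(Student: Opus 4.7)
The proof naturally splits into two parts: the identification $\hor = \Gamma(\O) \times (X_\sigma(\O) \cap T(\K))$ together with its further orbit decomposition, and the computation of the map $\val$.

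For the first part, observe that $\Gamma \times X_\sigma$ is open in $X$, being the preimage under the good quotient $X \to C_0$ of the affine open $\Gamma \subset C_0$. For $\alpha \in \hor$, the open subset $\alpha^{-1}(\Gamma \times X_\sigma) \subset \Spec \O$ contains the closed point and hence equals $\Spec \O$, so $\alpha$ factors through the open immersion $\Gamma \times X_\sigma \hookrightarrow X$. The product structure yields $(\Gamma \times X_\sigma)(\O) = \Gamma(\O) \times X_\sigma(\O)$, and imposing the generic-point condition $\alpha(\eta) \in \Gamma \times T$, which is equivalent to $\alphaM \in T(\K)$, gives the first equality; the converse inclusion is immediate. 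The decomposition $X_\sigma(\O) \cap T(\K) = \bigsqcup_{\nu \in \sigma \cap N} \C_\nu$ into $T(\O)$-orbits indexed by $\sigma \cap N$ is Ishii's theorem \cite[Theorem 4.1]{Ishii:ArcSpaceToric}: writing $\alphaM(\chi^m) = t^{\langle m, \nu_\alpha\rangle} \omega_\alpha(m)$ with $\omega_\alpha(m) \in \O^\star$, the vector $\nu_\alpha$ is forced to lie in $\sigma \cap N$ by positivity on $\sigma^\vee \cap M$, is preserved by $T(\O)$, and two arcs with the same $\nu_\alpha$ differ by a $T(\O)$-action that adjusts the unit factors $\omega_\alpha(m)$ freely.

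For the computation of $\val$, Lemma~\ref{l:arc-injective} implies that any $\alpha \in \hor$ satisfies either $\ell_\alpha = 0$ or $y_\alpha \in \Gamma$. In the first sub-case, the equivalence relation defining $\N_\QQ$ identifies the triple $(y_\alpha, \nu_\alpha, 0)$ with $(C, \nu_\alpha, 0)$. In the second sub-case, since $y_\alpha \in \Gamma \subset C_0 \setminus \Sp(\D)$ one has $\Delta_{y_\alpha} = \sigma$, so the Cayley cone $C_{y_\alpha}(\D) = \sigma \times \QQ_{\geq 0}$ encodes no additional combinatorial content beyond $\nu_\alpha \in \sigma$; the curve parameters $(y_\alpha, \ell_\alpha)$ are then parametrized by the $\Gamma(\O)$-factor of the product decomposition rather than by the hyperspace label, yielding again $\val(\alpha) = (C, \nu_\alpha, 0)$. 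The reverse inclusion $\val^{-1}(C, \nu, 0) \supseteq \Gamma(\O) \times \C_\nu$ is automatic, since any pair $(\alphaG, \alphaM) \in \Gamma(\O) \times \C_\nu$ defines a horizontal arc with invariant $\nu_\alpha = \nu$; this gives both $\val(\hor) = \{(C, \nu, 0) \mid \nu \in \sigma \cap N\}$ and $\val^{-1}(C, \nu, 0) = \Gamma(\O) \times \C_\nu$.

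The main delicate point is the second sub-case above: one must carefully justify the convention by which the raw triple $(y_\alpha, \nu_\alpha, \ell_\alpha)$ with $y_\alpha \in \Gamma$ and $\ell_\alpha > 0$ is sent by $\val$ to the class $(C, \nu_\alpha, 0)$, verifying that the horizontal contribution $\ell_\alpha \ord_{y_\alpha}$ is indeed absorbed into the fibration base $\Gamma(\O)$ and does not produce a distinct hyperspace label, consistent with the decomposition that underpins the motivic-integration computation in later sections.
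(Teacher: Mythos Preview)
Your argument for the product decomposition $\hor = \Gamma(\O) \times (X_\sigma(\O) \cap T(\K))$ and the orbit decomposition via \cite[Theorem 4.1]{Ishii:ArcSpaceToric} is correct and is exactly what the paper has in mind: the paper offers no proof beyond the single sentence ``the result is an immediate consequence of \cite[Theorem 4.1]{Ishii:ArcSpaceToric}''.

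Your final paragraph puts a finger on a genuine imprecision in the paper rather than a gap in your argument. With $\val$ defined literally as $\alpha \mapsto (y_\alpha,\nu_\alpha,\ell_\alpha)$ (as stated just before the lemma), a horizontal arc with $y_\alpha \in \Gamma$ and $\ell_\alpha > 0$ would land at a point of $C(\D)\cap\N$ that is \emph{not} of the form $(C,\nu,0)$, so the asserted equality $\val(\hor)=\{(C,\nu,0)\mid \nu\in\sigma\cap N\}$ cannot hold verbatim. What the paper actually uses downstream (Proposition~\ref{p:T-varieties} and Theorem~\ref{t:arc-horo}) is the map with target $|\DF|_\Gamma \cap \N$, from which the points $(y,\nu,\ell)$ with $y\in\Gamma$ and $\ell>0$ have been excised; on $\hor$ this amounts to recording only $\nu_\alpha$ and labelling the fiber by $(C,\nu_\alpha,0)$, with the pair $(y_\alpha,\ell_\alpha)$ absorbed into the factor $\Gamma(\O)$, exactly as you describe. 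So your reading of the second sub-case is the intended one; it is a tacit redefinition that the paper only makes explicit later, not something derivable from the literal definition preceding the lemma.
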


We now state a similar result for the subset $\ver$.

\begin{lemma}\label{l:decomposition-ver}
	Let $\alpha$ be an arc in $\ver$ and $\pi \in \CC(C)^*$ be a uniformizer of $y_\alpha$. There exists a one-to-one correspondence between
		\[
			\verY := \val^{-1}(\val(\alpha)) = \{ \beta \in \ver \mid y_\beta = y_\alpha, \nu_\beta = \nu_\alpha, l_\beta =  l_\alpha\}
		\]
		and the set of pairs $(\omega,u)$, where $\omega : M \to \O^*$ is a group homomorphism and $u \in \O^*$. 
		The correspondence is given by 	$\beta \mapsto (\omega_\beta,\qfun_\beta(\pi))$, where $\qfun_\beta$ is the function from Remark~\ref{r:data-arc}. It identifies the fiber $\verY$ with the product $C_{\nu_{\alpha}} \times \O^*$. Furthermore, we have
	\begin{equation}\label{e:val-ver}
		\val (\ver) = \{ (y,\nu,\ell) \in C(\D) \cap \N \mid y \in C_0 \setminus \Gamma \text{ and } \ell \geq 1 \}.
	\end{equation}
\end{lemma}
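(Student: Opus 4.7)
The plan is to construct an explicit inverse of the map $\beta \mapsto (\omega_\beta, \qfun_\beta(\pi))$ by exploiting the completion of $\CC(C)$ at $y_\alpha$. For any $\beta \in \verY$, Lemma~\ref{l:arc-injective} guarantees that $\beta^\star_\Gamma : \CC[\Gamma] \to \K$ is injective, so it extends uniquely to a $\CC$-algebra embedding $\CC(C) \hookrightarrow \K$. Since $\ell_\beta = \ell_\alpha \geq 1$ and $y_\alpha \in C_0 \setminus \Gamma$, the image of the uniformizer $\pi$ has strictly positive $t$-adic order, hence this embedding factors through the completion $\CC(C)^{\wedge}_{y_\alpha} \cong \CC((\pi))$ (the isomorphism using that $y_\alpha$ is a closed point of the smooth curve $C$ with residue field $\CC$). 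A continuous $\CC$-algebra morphism $\CC((\pi)) \to \K$ sending $\pi$ to an element of strictly positive $t$-order is determined uniquely by that image, via Laurent expansion, so the whole map is recovered from $\bar\beta^\star_\Gamma(\pi) = t^{\ell_\alpha}\qfun_\beta(\pi)$. Combined with the parametrization $\beta^\star_M(\chi^m) = t^{\langle m, \nu_\alpha \rangle}\omega_\beta(m)$ (with $\nu_\beta = \nu_\alpha$ fixed), this yields injectivity of $\beta \mapsto (\omega_\beta, \qfun_\beta(\pi))$.

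For surjectivity, given any pair $(\omega, u) \in \Hom(M, \O^\star) \times \O^\star$, I would construct the candidate arc $\tilde\beta^\star : \CC[\Gamma] \otimes_\CC \CC[M] \to \K$ by setting $\chi^m \mapsto t^{\langle m, \nu_\alpha \rangle}\omega(m)$ on the $M$-part, and by the continuous extension $\CC(C) \hookrightarrow \CC((\pi)) \to \K$ sending $\pi \mapsto t^{\ell_\alpha}u$ on the $\Gamma$-part. The key step is to verify that $\tilde\beta^\star$ restricts to a map $\CC[X] \to \O$: using the graded description $\CC[X] = \bigoplus_{m \in \sigma^\vee \cap M} H^0(C_0, \OO_{C_0}(\lfloor \D(m)\rfloor)) \otimes \chi^m$, this reduces to checking that $\ell_\alpha \ord_{y_\alpha}(f) + \langle m, \nu_\alpha \rangle \geq 0$ for each admissible $f \otimes \chi^m$, which is precisely what Lemma~\ref{l:arc2hyperspace} delivers from the hypothesis $(y_\alpha, \nu_\alpha, \ell_\alpha) \in C(\D) \cap \N$.

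The identification of $\verY$ with $\C_{\nu_\alpha} \times \O^\star$ then follows from the parametrization of $\C_{\nu_\alpha}$ by group homomorphisms $M \to \O^\star$ established in Lemma~\ref{l:decomposition-hor}. For the description of $\val(\ver)$, the inclusion $\subseteq$ follows by combining Lemmas~\ref{l:arc2hyperspace} and~\ref{l:arc-injective}, while the reverse inclusion follows by applying the surjectivity construction above to any triple $(y, \nu, \ell) \in C(\D) \cap \N$ with $y \in C_0 \setminus \Gamma$ and $\ell \geq 1$, choosing any $(\omega, u)$. The main obstacle I anticipate is the convergence argument for extending the constructed $\CC$-algebra morphism from $\CC[\Gamma]$ to $\CC((\pi))$: this hinges critically on the hypothesis $\ell_\alpha \geq 1$, which is precisely the condition distinguishing vertical from horizontal arcs, and ensures the $t$-adic convergence of the Laurent series in $\K$.
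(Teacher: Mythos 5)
Your proof is correct and essentially identical to the paper's: both parametrize the vertical fiber by $(\omega_\beta, \qfun_\beta(\pi))$ via the universal property of the completion $\CC[[\pi]]$ (you argue by Laurent expansion where the paper cites Eisenbud's Theorem 7.16), and both establish the image description of $\val$ on $\ver$ by the same construction. You are slightly more explicit than the paper in verifying that the constructed co-morphism sends $\CC[X]$ into $\O$ by invoking Lemma~\ref{l:arc2hyperspace}, a small check the paper leaves implicit.
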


\begin{proof}
	We first prove the surjectivity of the correspondence. Fix a pair $(\omega,u) \in T(\O) \times \O^*$ and write $y = y_\alpha$, $\nu = \nu_\alpha$, and $\ell = \ell_\alpha$. From \cite[Theorem 7.16]{Eisenbud:CommAlg}, we know that there exists a unique morphism $\phi : \hat \OO_y = \CC[[\pi]] \to \O$ such that $\phi(\pi)=t^{\ell} u$, where $\hat \OO_y$ is the formal completion of $(\OO_y, \pi \OO_y)$. Restricting to $\CC[C_0]$, since $\ell \neq 0$, $\phi$ induces an injective homomorphism $\CC[C_0] \to \O$, see Lemma~\ref{l:arc-injective}. Extending it to the fraction field we obtain a homomorphism $\CC(C_0) \to \K$, whose restriction to $\CC[\Gamma]$ is denoted by $\lambda$. We define an arc $\beta \in \ver$ by setting $\bar \beta^*(f \otimes \chi^m) = \lambda(f) t^{\langle m,\nu \rangle} \omega(m)$ for any homogeneous element $f \otimes \chi^m \in \CC[\Gamma] \otimes_\CC \CC[\sigma^\vee \cap M]$. The arc $\beta$ is in $\ver$ since $\ell_\beta = \ell \neq 0$ and $y_\beta = y \in C_0 \setminus \Gamma$.
	
	To prove that the correspondence is injective, it is enough to check that for $\beta \in \ver$, the map $\lambda : \CC[\Gamma] \to \K$ above is uniquely determined by $\ell=\ell_\beta$, $y=y_\beta$, and $\qfun_\beta(\pi)$, where $\pi \in \CC(C)^*$ is a uniformizer of $y$. Clearly $\lambda$ induces a continuous morphism $\bar \lambda : (\OO_y,\pi \OO_y) \to (\O,t \O)$, i.e., a morphism such that $\bar \lambda (\pi \OO_y) \subset t \O$. Taking formal completions we also get a morphism $\tilde{\lambda} : \hat \OO_y = \CC[[\pi]] \to \O$, see \cite[\S (23.H)]{Matsumura:CommAlg}. Using again \cite[Theorem 7.16]{Eisenbud:CommAlg} we see that $\tilde{\lambda}$ is the unique morphism sending $\pi$ to $t^\ell \qfun_\beta(\pi)$. We conclude by noticing that $\lambda$ is uniquely determined by $\tilde{\lambda}$, which proves the injectivity.
	
	To conclude the proof we check Equation~\eqref{e:val-ver}. The direct inclusion of $\val(\ver)$ has been proved by combining Lemmas~\ref{l:arc2hyperspace} and~\ref{l:arc-injective}. For the other inclusion, consider $(y,\nu,\ell) \in C(\D) \cap \N$ with $y \in C_0 \setminus \Gamma$ and $\ell \neq 0$. Choose a uniformizer $\pi$ of $y$ and consider the unique morphism $\hat \OO_y = \CC[[\pi]] \to \O$ which sends $\pi$ to $t^\ell$. By the same argument as before it implies the existence of a morphism $\bar \beta^* : A(C_0,\D) \to \O$ such that $\bar \beta^*(f \otimes \chi^m) := t^{\ell \ord_y(f) + \langle m,\nu \rangle}$, hence the existence of an arc $\beta \in \ver$ such that $\val(\beta) = (y,\nu,\ell)$.
	\end{proof}

\subsection{The arc space of a normal $T$-variety of complexity one}\label{s:T-non-affine}

In this section we generalize the results of \ref{s:hor-ver} to the non-affine case. We still assume that $G=T$ is a torus and $H=\{ e \}$ is trivial.

We want to describe the arc space of $X=X(\DF)$ in terms of the arc spaces of the affine charts $X(\D)$ for polyhedral divisors $\D \in \DF$, which may have non-affine loci. So to apply the previous results we need to replace $X$ with a $T$-variety $\tilde X$ associated with a divisorial fan $\tilde \DF$ such that all the polyhedral divisors in $\tilde \DF$ are defined on open dense \emph{affine} subsets of $C$. We call this process \emph{affinization}:

\begin{notation}\label{n:affine-fan}
Keeping the same notations as in \ref{s:hor-ver}, for any $\D^i\in \DF$, we consider a finite open dense affine covering $(C^i_j)_{j \in J_i}$ of the curve $C^i$. Denote by $\D_j^i$ the polyhedral divisor obtained as the restriction of $\D^i$ to the open affine subset $C^i_j$, and by $\tilde{\DF}$ the divisorial fan constituted of all the $\D^i_j$. Then the $T$-variety $\tilde X = X(\tilde \DF)$ does not depend on the choices of the affine coverings, and its support $|\tilde \DF|$ coincides with that of $\DF$.
\end{notation}

From \cite[Theorem 3.1]{AltmannHausen} it follows that the inclusions $A(C^i,\D^i) \hookrightarrow A(C^i_j,\D_j^i)$ induce a proper $T$-equivariant birational morphism 
\begin{equation}\label{e:Etilde}
	q : X(\tilde \DF) \to X(\DF).
\end{equation}

As in Section~\ref{s:hor-ver}, we introduce a morphism $\val$ defined on $X_\Gamma$, whose image will be contained in
\begin{equation}\label{e:HGamma}
	|\DF|_\Gamma := \bigcup_{\D \in \DF} C_\Gamma(\D) \subset \N_\QQ,
\end{equation}
where $C_\Gamma(\D) = C(\D) \setminus \left\{ (y,\nu,\ell) \mid y \in \Gamma, \nu \in N_\QQ, \ell > 0 \right\}$. Clearly $|\DF|_\Gamma = |\tilde \DF|_\Gamma$.

The next result extends Lemmas~\ref{l:decomposition-hor} and~\ref{l:decomposition-ver} to the $T$-variety $X$.

\begin{proposition}\label{p:T-varieties}
	Let $\DF$ be a divisorial fan on $(C,T)$ and write $X=X(\DF)$. Consider a dense open subset $\Gamma\subset C$ which does not contain any special point. There exists a surjective map
	\begin{equation}\label{e:val}
		 \begin{array}{clcl}
		 	\val : & X_\Gamma & \to & |\DF|_\Gamma \cap \N \\
		 	& \alpha & \mapsto & (y_\alpha,\nu_\alpha,\ell_\alpha),
		 \end{array}
	\end{equation}
where $|\DF|_\Gamma$ is as in Equation~\eqref{e:HGamma}, and $\N$ as in Definition~\ref{d:hyperspace}. Moreover
	\begin{equation*}
		\val^{-1}(y_\alpha,\nu_\alpha,\ell_\alpha) = \begin{cases}
		 \Gamma(\O) \times \C_{\nu_{\alpha}} & \text{if $\ell_\alpha = 0,$} \\
		\O^* \times \C_{\nu_\alpha} & \text{if $\ell_\alpha \geq 1$.}
		\end{cases}
	\end{equation*}
\end{proposition}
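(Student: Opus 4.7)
The plan is to reduce to the affine case of Lemmas~\ref{l:decomposition-hor} and~\ref{l:decomposition-ver} by an affinization step and then glue the resulting local descriptions.

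First, I would replace $\DF$ with the affinized fan $\tilde\DF$ of Notation~\ref{n:affine-fan}. The proper birational $T$-equivariant morphism $q : X(\tilde\DF) \to X(\DF)$ from Equation~\eqref{e:Etilde} is an isomorphism over the common dense open subset $\Gamma \times T$, so the valuative criterion of properness applied to the discrete valuation ring $\O$ lifts each arc $\alpha \in X_\Gamma$ uniquely to an arc on $X(\tilde\DF)$ and produces a bijection between the two arc subspaces. Since $|\tilde\DF|_\Gamma = |\DF|_\Gamma$, this reduces matters to the case where every $\D \in \DF$ has affine locus. Next, given such an $\alpha$, the closed point $\alpha(0)$ lies in some affine chart $X(\D)$; because $\Spec \O$ is a local scheme and $X(\D)$ is open in $X$, the entire arc factors through $X(\D)$. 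Lemmas~\ref{l:decomposition-hor} and~\ref{l:decomposition-ver} then supply the triple $(y_\alpha,\nu_\alpha,\ell_\alpha) \in C(\D) \cap \N$ together with the claimed fiber decomposition; Lemma~\ref{l:arc-injective} ensures that whenever $y_\alpha \in \Gamma$ we have $\ell_\alpha = 0$, so the triple automatically lies in $|\DF|_\Gamma \cap \N$.

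Second, I would verify that $\val(\alpha) := (y_\alpha,\nu_\alpha,\ell_\alpha)$ does not depend on the chart chosen. The triple encodes precisely the $T$-invariant discrete valuation on $\CC(X)^\star$ obtained by pulling back the $t$-adic valuation of $\K$ through $\alpha^\star$ and restricting to $\CC(C) \otimes_\CC \CC[M]$, which is manifestly intrinsic to $\alpha$; the one-to-one correspondence between such valuations and points of the hyperspace $\N_\QQ$ recalled after Definition~\ref{d:hyperspace} and supported by Lemma~\ref{l:arc2hyperspace} then yields well-definedness. Surjectivity onto $|\DF|_\Gamma \cap \N$ follows from the existence parts of Lemmas~\ref{l:decomposition-hor} and~\ref{l:decomposition-ver} applied to any chart $X(\D)$ whose hypercone $C(\D) \cap \N$ contains the prescribed triple.

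The main obstacle is to check that the fiber decompositions obtained chart by chart agree on overlaps: two affine charts $X(\D)$ and $X(\D')$ containing $\alpha(0)$ must give the same global product decomposition $\Gamma(\O) \times \C_{\nu_\alpha}$ in the horizontal case, or $\O^\star \times \C_{\nu_\alpha}$ in the vertical case. This will follow from the intrinsic nature of the torus-component $\alphaM$, which determines the $T(\O)$-orbit $\C_{\nu_\alpha}$, and of the curve-component $\alphaG$, parametrized either by an arc on the affine curve $\Gamma$ or, in the vertical case, by the image $\qfun_\alpha(\pi) \in \O^\star$ of a uniformizer of $y_\alpha$; neither of these pieces of data depends on the choice of chart, so the matching is automatic.
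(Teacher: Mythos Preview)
Your proposal is correct and follows essentially the same route as the paper: affinize via the valuative criterion of properness to reduce to $\tilde\DF$, cover $X_\Gamma$ by the affine charts $X(\D)_\Gamma$, apply Lemmas~\ref{l:decomposition-hor} and~\ref{l:decomposition-ver} locally, and glue. You in fact supply more justification for chart-independence (via the intrinsic valuation on $\CC(C)\otimes_\CC\CC[M]$) than the paper, which simply asserts that the local maps $\val_\D$ are ``clearly compatible and glue into $\val$''.
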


\begin{proof}
	Recall the morphism $q : \tilde X \to X$ from Equation~\eqref{e:Etilde}. We first check that $X_\Gamma$ identifies with
	\[
		\tilde X_\Gamma := \{ \alpha \in \tilde X(\O) \mid \alpha(\eta) \in \Gamma \times T \}.
	\] 
	Indeed, we know that $q\mid_{\Gamma \times T}$ is the identity map. Write $X_1 := X \setminus (\Gamma \times T)$. By the valuative criterion of properness, the morphism $q$ induces a bijection
	\begin{center}
		\begin{tikzpicture}[description/.style={fill=white,inner sep=2pt}]
			\matrix (m) [matrix of math nodes, row sep=3em,column sep=2.5em, text height=1.5ex, text depth=0.25ex]
			{ 	\L(\tilde X) \setminus \L(q^{-1}(X_1)) &  \L(X) \setminus \L(X_1). \\};

			\path[-,font=\scriptsize]
			(m-1-1) edge[->] node[auto] {$\sim$} (m-1-2);
		\end{tikzpicture}
	\end{center}
	This implies that $X_\Gamma \cong \tilde X_\Gamma$, so we will assume in the rest of the proof that $\DF=\tilde \DF$.
	
	We now prove that $\val$ is well-defined and surjective. Clearly the arc space has a covering
	\[
		X_\Gamma = \bigcup_{\D \in \DF} X(\D)_\Gamma,
	\]
	where $X(\D)_\Gamma := \{ \alpha \in X_\Gamma \mid \alpha(0) \in X(\D) \}$. Fixing a uniformizer for each point of $C\setminus \Gamma$ and combining Lemmas~\ref{l:decomposition-hor} and~\ref{l:decomposition-ver} we get well-defined surjective maps
	\[
		\val_{\D} : X(\D)_\Gamma \to C_\Gamma(\D) \cap \N,
	\]
	which are clearly compatible and glue into $\val$. Finally, the description of the fibers of $\val$ is a consequence of Lemma~\ref{l:decomposition-hor} when $\ell_\alpha=0$ and of Lemma~\ref{l:decomposition-ver} when $\ell_\alpha \geq 1$.
\end{proof}

\subsection{The general case}\label{s:arcs-general}

Let us now treat the case of a horospherical $G$-variety $X$ of complexity one. We start by recalling the complexity zero case.

Let $G/H$ be a horospherical homogeneous space and $P$, $I$, $T$, $M$ and $N$ be as in Notation~\ref{n:setting-arc}. A horospherical embedding $Y$ of $G/H$ is parametrized by a colored fan $\Sigma$ on $N_\QQ$, see~\cite[Theorem 3.3]{Knop:Luna-Vust}. Let $|\Sigma|$ be the support of $\Sigma$, that is, the reunion $|\Sigma| := \bigcup_{(\sigma,\F) \in \Sigma} \sigma$, where $\sigma$ is a strongly convex cone and $\F \subset \F_{G/H}$ is a subset of colors. The next theorem describes the arc space of $Y$ in terms of $|\Sigma|$.  

\begin{theorem}[{\cite[Theorem 3.1]{Batyrev-Moreau}}]\label{t:Batyrev-Moreau}
	Let $Y$ be a horospherical $G/H$-embedding defined by a colored fan $\Sigma$. Then there exists a surjective map
	\[
		\mathcal{V} : Y(\O) \cap (G/H)(\K) \to |\Sigma| \cap N
	\]
whose fiber over $\nu \in |\Sigma| \cap N$ is a $G(\O)$-orbit $\Omega_\nu$. 
In particular we obtain a one-to-one correspondence between the lattice points in $|\Sigma| \cap N$ and the $G(\O)$-orbits in $Y(\O) \cap (G/H)(\K)$.
\end{theorem}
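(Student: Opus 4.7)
The plan is to adapt Ishii's strategy for toric varieties to the horospherical setting by exploiting the parabolic induction structure. The first step is to construct the valuation map $\mathcal{V}$. Given an arc $\alpha : \Spec \O \to Y$ with $\alpha(\eta) \in G/H$, I would use the surjectivity of $G(\O) \to (G/P)(\O)$ (which follows from the smoothness of $G \to G/P$ via Hensel lifting) to translate $\alpha$ by a suitable element of $G(\O)$ so that $\alpha(\eta)$ lies in the open $B$-orbit $Bx_0 \subset G/H$. Then $\alpha^\star$ restricts to a ring homomorphism $\CC[Bx_0]^U = \CC[M] \to \K$, and composing with the $t$-adic valuation on $\K$ yields a group morphism $M \to \ZZ$, that is, a lattice vector $\nu_\alpha \in N$. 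Set $\mathcal{V}(\alpha) := \nu_\alpha$. This is well defined on $G(\O)$-orbits: the translation is determined up to the stabilizer of $x_0$, whose action on $\CC[M]$ is by units and therefore does not alter the associated valuation.

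The second step is to check that $\nu_\alpha$ lies in $|\Sigma|\cap N$. The lattice vector $\nu_\alpha$ corresponds to a $G$-invariant $\QQ$-valuation on $\CC(Y) = \CC(G/H)$, and the hypothesis $\alpha \in Y(\O)$ forces this valuation to be non-negative on the coordinate ring of some simple affine $B$-chart $Y_0$ of $Y$. By the Luna--Vust classification of $G$-invariant valuations on horospherical homogeneous spaces (as recalled in \cite[Section 16]{TimashevBook}), such a condition holds precisely when $\nu_\alpha$ lies in the cone attached to $Y_0$, and hence in $|\Sigma|$.

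For surjectivity, given $\nu \in |\Sigma|\cap N$, choose a cone $\sigma\in\Sigma$ containing $\nu$, and consider the one-parameter subgroup $\lambda_\nu : \GG_m \to T$. The composition $\Spec\O \to T \to G/H$, $t \mapsto \lambda_\nu(t)\cdot x_0$, extends to an arc in the simple $G$-chart $Y_\sigma$ associated with $\sigma$, and its image under $\mathcal{V}$ is $\nu$ by construction.

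The main obstacle is the injectivity of $\mathcal{V}$ on $G(\O)$-orbits. Given two arcs $\alpha, \beta$ with $\mathcal{V}(\alpha) = \mathcal{V}(\beta) = \nu$, after a preliminary $G(\O)$-translation I may assume that both generic points lie in $Bx_0$. Since $H\supset U$, the open $B$-orbit has the structure of a $T$-torsor over an affine cell, and the condition $\nu_\alpha = \nu_\beta$ ensures that the two restrictions $\alpha^\star, \beta^\star$ to $\CC[M]$ differ by a group homomorphism $M \to \O^\star$, which can be absorbed by acting with an element of $T(\O)\subset G(\O)$. After this correction, the remaining discrepancy between $\alpha$ and $\beta$ lies in the unipotent directions of $Bx_0$, and can be removed by the action of a suitable element of $U(\O)\subset G(\O)$ using that the $U$-orbit map $U \to U\cdot x_0$ is an isomorphism. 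This matches $\alpha$ and $\beta$ as arcs in $Y$ and concludes the proof.
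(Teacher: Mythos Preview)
Your proposal is essentially correct and follows the same overall strategy as the paper's sketch (which, since the result is quoted from \cite{Batyrev-Moreau}, is itself only an outline): use the surjection $G(\O)\to (G/P)(\O)$ to reduce every arc to one lying over the base point of $G/P$, and then invoke the toric description of the fiber. Two small comments are worth making.

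First, the paper makes the reduction cleaner by \emph{discoloring} at the outset and using the valuative criterion of properness, so that one may assume $Y=G\times^{P}Y_{\Sigma}$. Then the fiber over $p_0\in G/P$ is literally the toric variety $Y_\Sigma$, and once the arc has been $G(\O)$-translated to land in this fiber one can quote Ishii's result \cite[Corollary~4.3]{Ishii:ArcSpaceToric} directly, both for the containment $\nu_\alpha\in|\Sigma|$ and for the transitivity of the $T(\O)$-action on each $\C_\nu$. Your step~2 via Luna--Vust and your steps~3--4 amount to reproving this toric statement by hand; that is fine, but the paper's route is shorter.

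Second, your step~4 carries a redundant ingredient. The translation you perform in step~1 does more than place $\alpha(\eta)$ in the open $B$-orbit $Bx_0$: since you lift an $\O$-point of $G/P$ (not merely a $\K$-point), the translated arc maps to the base point $p_0$ in $(G/P)(\O)$, so its generic point actually lies in the fiber $T(\K)=(P/H)(\K)$, not just in $Bx_0(\K)\simeq T(\K)\times\AA^s(\K)$. Consequently, after the $T(\O)$-correction the two arcs already agree at the generic point, and by separatedness of $Y$ they agree as $\O$-points; no further ``unipotent correction'' via $U(\O)$ is needed. The ambiguity in the translation of step~1 is likewise by $P(\O)$ rather than by the stabilizer $H$ of $x_0$, and $P(\O)$ acts on $T(\K)$ through $T(\O)$, which leaves the valuation unchanged.
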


Let us sketch the construction of the map $\mathcal{V}$. After discoloring, using the valuative criterion of properness we may assume that $Y=G \times^{P} Y_{\Sigma}$, where $Y_\Sigma$ is the $T$-variety with fan $\Sigma$. Since $T=P/H \subset G/H$, the closure $Y_\Sigma = \overline T$ embeds into $Y$. We have a commutative diagram:
\begin{equation}\label{e:diagram-arc}
	\begin{tikzpicture}[description/.style={fill=white,inner sep=2pt},baseline=(current bounding  box.center)]
			\matrix (m) [matrix of math nodes, row sep=3em,column sep=2.5em, text height=1.5ex, text depth=0.25ex]
		{ 	Y_\Sigma(\O) \cap T(\K) &  {|\Sigma|} \cap N \\
			Y(\O) \cap (G/H)(\K) &  \\
		};
		\path[-]
			(m-1-1) edge[->] node[auto] {$\Psi$} (m-1-2)
			(m-1-1) edge[right hook->] (m-2-1)
			(m-2-1) edge[->] node[below] {$\mathcal{V}$} (m-1-2);
	\end{tikzpicture}
\end{equation}
where the map $\Psi$ comes from \cite[Corollary 4.3]{Ishii:ArcSpaceToric}. Denote by $\C_\nu$ the fiber over $\nu \in |\Sigma| \cap N$ of $\Psi$. The map $\mathcal{V}$ extends $\Psi$ in such a way that $\Omega_\nu$ is the unique $G(\O)$-orbit containing $\C_\nu$. 

For the following theorem, which deals with the complexity one case, we use the setting of Notation~\ref{n:setting-arc}. 

\begin{theorem}\label{t:arc-horo}
	Let $X:=X(\DF)$, where $\DF$ is a colored divisorial fan on $(C,G/H)$. Let  $\Gamma$ be a dense open affine subset of $C$ which does not contain any special point. There exists a surjective map
	\[
		\val : \begin{array}{lcl}
		         X_\Gamma  = X(\O)\cap (\Gamma\times G/H)(\K) & \to & |\DF|_\Gamma \cap \N \\
		 	 \alpha & \mapsto & (y_\alpha,\nu_\alpha,\ell_\alpha)
		\end{array} \:\: \text{where} \:\:\:
		|\DF|_\Gamma := \bigcup_{(\D,\F) \in \DF} C_\Gamma(\D) \subset \N_\QQ
	\]
	and $\N, \N_\QQ$ are as in Definition~\ref{d:hyperspace}.
	Moreover it satisfies
	\begin{equation*}
		\val^{-1}(y_\alpha,\nu_\alpha,\ell_\alpha) = \begin{cases}
		 \Gamma(\O) \times \Omega_{\nu_{\alpha}} & \text{if $\ell_\alpha = 0,$} \\
		\O^* \times \Omega_{\nu_\alpha} & \text{if $\ell_\alpha \geq 1$,}
		\end{cases}
	\end{equation*}
	where the $\Omega_{\nu_\alpha}$ are as in Theorem~\ref{t:Batyrev-Moreau}.
\end{theorem}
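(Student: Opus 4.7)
The plan is to reduce this result to the torus case (Proposition~\ref{p:T-varieties}) via two successive reductions: first discoloration, then parabolic induction. The $G(\O)$-orbit structure will then come from combining the $T(\O)$-orbit structure given by Proposition~\ref{p:T-varieties} with the complexity-zero description of Theorem~\ref{t:Batyrev-Moreau}.

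\emph{Step 1: Discoloration.} The morphism $\pi_{dis} : X(\DFd) \to X(\DF)$ from~\eqref{e:discolor} is proper and birational, and it restricts to the identity on the open subset $\Gamma \times G/H$. Applying the valuative criterion of properness exactly as in the first part of the proof of Proposition~\ref{p:T-varieties}, we obtain a canonical bijection $X(\DFd)_\Gamma \simeq X(\DF)_\Gamma$. We may therefore assume that $\DF = \DFd$ has no colors and, via~\eqref{e:V(E)}, write $X = G \times^{P} V(\DF)$, where $V(\DF)$ is a $T$-variety of complexity one on $(C,T)$ described by the same combinatorial data.

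\emph{Step 2: Parabolic induction.} The natural projection $\pi : X \to G/P$ is a Zariski-locally trivial $V(\DF)$-bundle. Given $\alpha \in X_\Gamma$, the composition $\pi \circ \alpha \in (G/P)(\O)$ lifts to some $g \in G(\O)$ (using smoothness of $G \to G/P$ and Hensel's lemma, which makes $G(\O) \twoheadrightarrow (G/P)(\O)$ surjective). After replacing $g$ by $g b$ for a suitable $b \in G(\O)$ if necessary, we may assume that $g^{-1} \cdot \alpha$ lies in the fiber $V(\DF)$ over $[P]$, and the condition $\alpha(\eta) \in \Gamma \times G/H$ together with the identification of the fiber with $T$ inside $G/H$ gives $g^{-1}\cdot\alpha \in V(\DF)_\Gamma$. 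We then define $\val(\alpha) := \val_V(g^{-1}\cdot \alpha)$, where $\val_V : V(\DF)_\Gamma \to |\DF|_\Gamma \cap \N$ is the map supplied by Proposition~\ref{p:T-varieties}. Independence of $g$ is the key well-definedness point: two such lifts differ by an element of the stabilizer $P(\O)$ of $[P]$, and since $P$ acts on $V(\DF)$ through its torus quotient $T=P/H$, the two resulting arcs in $V(\DF)_\Gamma$ lie in the same $T(\O)$-orbit; as $\val_V$ is $T(\O)$-invariant, $\val(\alpha)$ does not depend on $g$.

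\emph{Step 3: Orbit description.} By Proposition~\ref{p:T-varieties}, the fiber $\val_V^{-1}(y,\nu,\ell)$ is $\Gamma(\O) \times \C_\nu$ when $\ell = 0$ and $\O^\star \times \C_\nu$ when $\ell \geq 1$, where $\C_\nu$ is a $T(\O)$-orbit in $V(\DF) \cap T(\K)$. The factors $\Gamma(\O)$ and $\O^\star$ come from the base curve direction and are unaffected by the $G(\O)$-action. On the other hand, applying Theorem~\ref{t:Batyrev-Moreau} fiberwise (that is, to the horospherical $G/H$-embedding obtained by restricting $X$ over a point of $C$) identifies $G(\O)\cdot \C_\nu$ with the unique $G(\O)$-orbit $\Omega_\nu$ in $X \cap (G/H)(\K)$ corresponding to $\nu$. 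Gathering $G(\O)$-translates yields the description $\val^{-1}(y,\nu,\ell) = \Gamma(\O)\times \Omega_\nu$ or $\O^\star \times \Omega_\nu$, and surjectivity of $\val$ follows from that of $\val_V$.

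\emph{Main obstacle.} The delicate point is the well-definedness of $\val$ in Step 2, which requires the compatibility between the $P(\O)$-ambiguity of the lift and the $T(\O)$-action on $V(\DF)$. Closely related is the fiberwise application of Theorem~\ref{t:Batyrev-Moreau} in Step 3: one must verify that the complexity-zero orbit structure on the fiber transports coherently to identify $G(\O)\cdot \C_\nu$ as a single $G(\O)$-orbit $\Omega_\nu$ of the complexity-one arc space, rather than a proper union thereof. Both points rely on the explicit parabolic induction description~\eqref{e:V(E)} of $X(\DFd)$.
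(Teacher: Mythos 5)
Your proposal is essentially the paper's proof: discoloration and parabolic induction to reduce to $X = G\times^P V(\DF)$, then use $G(\O)$-transitivity on $(G/P)(\O)$ to translate arcs into $V(\DF)_\Gamma$ and invoke Proposition~\ref{p:T-varieties}, and finally assemble the $G(\O)$-orbits $\Omega_\nu$ from the $T(\O)$-orbits $\C_\nu$ via Theorem~\ref{t:Batyrev-Moreau}. The only real difference is organizational: the paper first defines $\val$ on all of $\Gamma(\K)\times(G/H)(\K)$ using the $G(\O)$-invariant map $\mathcal V$ from Batyrev--Moreau (so well-definedness on $X_\Gamma$ is automatic) and then identifies it with $\Psi$ after translation, whereas you define $\val$ directly by translating to $V(\DF)_\Gamma$ and verify independence of the lift $g$ through $P(\O)$ acting via $T=P/H$ on the fiber; these are equivalent, and your well-definedness check is exactly the content of the paper's claim that $\mathcal V$ is constant on $G(\O)$-orbits. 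Two small imprecisions worth noting: in Step~1 the paper also applies the affinization map $q$ to pass to $\tilde\DF$ (this is handled inside the proof of Proposition~\ref{p:T-varieties} anyway, so nothing is lost, but it is worth saying explicitly that $\val_V$ is constructed on $V(\DF)_\Gamma$ only after that identification); and in Step~3 the phrase \emph{``the $G/H$-embedding obtained by restricting $X$ over a point of $C$''} is not quite meaningful, since $X$ does not fiber over $C$ -- what you mean is that $\Omega_\nu = G(\O)\cdot\C_\nu\subset (G/H)(\K)$ is determined by $\nu$ alone (via the embedding $T(\K)\subset(G/H)(\K)$), independently of any complexity-zero embedding, which is what makes the identification coherent.
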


\begin{proof}
	First we discolor $X$ using the map $\pi_{dis} : X_{dis} \to X$ from Equation~\eqref{e:discolor}. We have $X_{dis} = G \times^P V(\DF)$, where $V(\DF)$ is as in Equation~\eqref{e:V(E)}. Then we replace the colored divisorial fan $\DF$ with another fan $\tilde \DF$ using the affinization map $q$ from Equation~\eqref{e:Etilde}, obtaining a proper birational morphism $\Theta : G \times^P V(\tilde \DF) \to X$ which restricts to the identity on $\Gamma \times G/H$. By the valuative criterion of properness, it is equivalent to consider arcs on $X$ or on $G \times^P V(\tilde \DF)$, so from now on we assume $X=G \times^P V(\tilde \DF)$.
	
	Next we compare $X_\Gamma$, the set of $K$-valued formal arcs on $X$ with generic point in $\Gamma \times T$, with
	\[
		V(\tilde \DF)_\Gamma = \{ \alpha : \Spec \O \to V(\tilde \DF) \mid \alpha(\eta) \in \Gamma \times T \} \subset \L(V(\tilde \DF))(K).
	\]
	By construction of $X=G \times^P V(\tilde \DF)$, the closure of $\Gamma \times T$ embeds in $X$ and identifies with $V(\tilde \DF)$. The variety $V(\tilde \DF)$ will play the same role as $Y_\Sigma$ in Equation~\eqref{e:diagram-arc}. We have a map $\Psi : V(\tilde \DF)_\Gamma \to |\tilde \DF|_\Gamma \cap \N$, which is the map $\val$ from Proposition~\ref{p:T-varieties}.
	
	From the proof of \cite[Theorem 3.1]{Batyrev-Moreau} we obtain a map $\mathcal{V} : (G/H)(\K) \to N$ which is constant on the $G(\O)$-orbits, and whose restriction to $T(\K) \subset (G/H)(\K)$ is constructed from the standard valuation map $\ord : \K^* \to \ZZ$ as in \cite[Lemma 3.2]{Batyrev-Moreau}. We use it to build part of a map from $(\Gamma \times G/H)(\K)$ to $\N$, which we will then restrict to $X_\Gamma$. 
	
	To get the other part of the map, as in Remark~\ref{r:data-arc} we parametrize $\alpha \in \Gamma(\K)$ by the pair $(y_\alpha,\ell_\alpha)$, where $\alpha^*(f) = t^{\ell_\alpha \ord_{y_\alpha}(f)} \qfun_\alpha(f)$ for any $f \in \CC[\Gamma]$. Finally
	\[
		\begin{array}{clcl}
			\val : &(\Gamma\times G/H)(\K)  & \to & \N \\
			& (\alpha,\beta) & \mapsto & (y_\alpha,\nu_\beta,\ell_\alpha),
		\end{array}
	\]
	where $\nu_\beta = \mathcal{V}(\beta)$. Restricted to $V(\tilde \DF)_\Gamma$ this map is simply $\Psi$, and it is constant on the $G(\O)$-orbits.
	
	We now study the restriction of $\val$ to $X_\Gamma$, which we denote again in the same way. Consider the quotient map $\varphi : X \to G/P$ and the corresponding map $X(\O) \to (G/P)(\O)$ on the set of $\O$-valued points. We denote by $\vinf : X_\Gamma \to (G/P)(\O)$ its restriction to $X_\Gamma$. It extends to the natural map
	\[
		\bvinf : \Gamma(\K) \times (G/H)(\K) \to (G/P)(\K).
	\] 
	Since $(G/P)(\K)=(G/P)(\O)$ by the valuative criterion of properness, the image of $\bvinf$ is in fact contained in $(G/P)(\O)$, which is equal to $G(\O)/P(\O)$ by the local triviality of the quotient map $G \to G/P$. Moreover the map $\vinf$ is $G(\O)$-equivariant. If $\alpha \in X_\Gamma$, then by transitivity and $G(\O)$-equivariance there exists $g \in G(\O)$ such that $\vinf(g \cdot \alpha) =p_0$, where $p_0 = P(\O) \in (G/P)(\O)$. Hence
	\[
		g \cdot \alpha \in \vinf^{-1}(p_0) \subset \bvinf^{-1}(p_0) = (\Gamma \times T)(\K).
	\]
	It follows that $g \cdot \alpha \in (\Gamma \times T)(\K) \cap X_\Gamma$, and $(\Gamma \times T)(\K) \cap X_\Gamma \subset V(\tilde \DF)_\Gamma$. Indeed if $\beta \in (\Gamma \times T)(\K) \cap X_\Gamma$, then $\beta(0) \in \overline{\beta(\eta)} \subset \overline{\Gamma\times T} = V(\tilde \DF)$, thus $\beta$ is an element of $V(\tilde \DF)_\Gamma$. As a result $g \cdot \alpha \in V(\tilde \DF)_\Gamma$, and $\val(\alpha) = \val(g \cdot \alpha) = \Psi(g \cdot \alpha)$. This implies that $\val(X_\Gamma)=\Psi(V(\tilde \DF)_\Gamma) = |\tilde \DF|_\Gamma \cap \N$.
	
	To conclude we compute the fibers of the restriction $\val : X_\Gamma \to |\tilde \DF|_\Gamma \cap \N$. 
	Consider $(y,\nu,\ell) \in |\tilde \DF|_\Gamma \cap \N$. For any $\alpha \in \val^{-1}(y,\nu,\ell)$, by a previous argument there exists $g \in G(\O)$ such that $g \cdot \alpha \in V(\tilde \DF)_\Gamma \cap \val^{-1}(y,\nu,\ell)$. Finally, by Proposition~\ref{p:T-varieties}, we know that 
	\begin{equation*}
		V(\tilde \DF)_\Gamma \cap \val^{-1}(y,\nu,\ell) = \begin{cases}
														\Gamma(\O) \times \C_\nu & \text{if $\ell=0$,} \\
														\O^* \times \C_\nu & \text{if $\ell \geq 1$.} 
													\end{cases} \qedhere
	\end{equation*}
\end{proof}

\subsection{Motivic volumes}\label{s:motivic-horo} Assuming that $X$ is smooth, we now compute the motivic measure of the fibers of the map $\val$ from Theorem~\ref{t:arc-horo}. 

We start by studying truncations of arcs in $X_\Gamma$. Using the discoloration morphism, we may assume that $\DF$ has trivial coloration, so that $X= G \times^P V(\DF)$. Recall that there is a surjective quotient map $\varphi : X \to G/P$. The next result, obtained from Theorem~\ref{t:arc-horo}, gives a comparison between the jet spaces of $X$ and those of the flag variety $G/P$. 

\begin{lemma}\label{l:surj-arc}
	Consider $\xi := (y,\nu,\ell) \in |\DF|_\Gamma \cap \N$ and $q \in \NN$, where $|\DF|_\Gamma$ is as in Equation~\eqref{e:HGamma}. Then the restriction to $\tronc_q(\val^{-1}(\xi))$ of the bundle of $q$-jets $\varphi_q : \L_q(X) \to \L_q(G/P)$ is a bundle with fiber isomorphic to $\tronc_q'\left(V(\DF)_\Gamma \cap \val^{-1}(\xi)\right)$, where $\tronc_q' : \L(V(\DF)) \to \L_q(V(\DF))$ is the truncation map.
\end{lemma}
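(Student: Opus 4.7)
The plan is to leverage the Zariski-local triviality of the parabolic induction $X = G \times^{P} V(\DF) \to G/P$ and transfer it to the jet level, then to incorporate the valuation data.

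First I would recall that $G \to G/P$ is a Zariski-locally trivial principal $P$-bundle, so $\varphi : X \to G/P$ is a Zariski-locally trivial bundle with typical fiber $V(\DF)$. Pick affine open sets $U_i \subset G/P$ covering $G/P$ together with trivializations $\varphi^{-1}(U_i) \xrightarrow{\sim} U_i \times V(\DF)$ compatible with $\varphi$. Since the jet space functor $\L_q(-)$ preserves open immersions and finite products, each trivialization induces
\[
\L_q(\varphi^{-1}(U_i)) \;\cong\; \L_q(U_i) \times \L_q(V(\DF))
\]
over $\L_q(U_i)$. Hence $\varphi_q : \L_q(X) \to \L_q(G/P)$ is itself a Zariski-locally trivial bundle with fiber $\L_q(V(\DF))$.

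Second — and this is the core of the argument — I would analyze how $\tronc_q(\val^{-1}(\xi))$ decomposes under these trivializations. The main observation, coming from the construction of $\val$ in the proof of Theorem~\ref{t:arc-horo}, is that $\val$ only sees the $V(\DF)$-component of an arc: for any $\alpha \in X_\Gamma$ with $\vinf(\alpha)$ lying above some $U_i$, one can find $g \in G(\O)$ such that $g\cdot\alpha$ lies over the basepoint $p_0$ of $U_i$, in which case $g\cdot\alpha$ belongs to $V(\DF)_\Gamma$ and $\val(\alpha) = \val(g\cdot\alpha)$ is computed in $V(\DF)$ via Proposition~\ref{p:T-varieties}. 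Using the $G(\O)$-equivariance of $\varphi_\infty$, this shows that in the trivialization $\varphi^{-1}(U_i) \cong U_i \times V(\DF)$, an arc $\alpha \leftrightarrow (\beta,\gamma)$ satisfies $\alpha \in X_\Gamma \cap \val^{-1}(\xi)$ if and only if $\gamma \in V(\DF)_\Gamma \cap \val^{-1}(\xi)$, with $\beta$ unconstrained in $U_i(\O)$. The same conclusion holds with $\O$ replaced by $\O/t^{q+1}$ because the conditions defining $\val^{-1}(\xi)$ — namely the values $y_\alpha$, $\nu_\alpha$, $\ell_\alpha$ — are determined by the $q$-jet of $\alpha$ for $q$ large enough compared to $\ell$ and the orders appearing in $\xi$.

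Combining the two steps, I obtain local trivializations
\[
\tronc_q(\val^{-1}(\xi)) \cap \varphi_q^{-1}(\L_q(U_i)) \;\cong\; \L_q(U_i) \times \tronc_q'\bigl(V(\DF)_\Gamma \cap \val^{-1}(\xi)\bigr),
\]
which glue into the asserted bundle structure on $\tronc_q(\val^{-1}(\xi))$ over its image in $\L_q(G/P)$ (this image is all of $\L_q(G/P)$ by the $G(\O)$-transitivity argument from Theorem~\ref{t:arc-horo}). The main obstacle is the second step: checking that the subset $\tronc_q(\val^{-1}(\xi))$ respects the product decomposition of the local trivialization, since these trivializations are not $G(\O)$-equivariant. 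This is overcome by exploiting the $G(\O)$-equivariance of $\vinf$ together with the transitivity statement of Theorem~\ref{t:arc-horo}, which reduces every arc in $\val^{-1}(\xi)$ to one lying in the fiber $V(\DF)_\Gamma$, where $\val$ is computed purely in terms of the $T$-equivariant data of Proposition~\ref{p:T-varieties}.
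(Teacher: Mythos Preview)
Your approach is correct and in fact more complete than the paper's own proof. The paper argues via a short commutative square
\[
\begin{array}{ccc}
\val^{-1}(\xi) & \longrightarrow & \L(G/P) \\
\downarrow & & \downarrow \\
\tronc_q(\val^{-1}(\xi)) & \longrightarrow & \L_q(G/P)
\end{array}
\]
using surjectivity of the top map (from Theorem~\ref{t:arc-horo}) and of the right-hand truncation (smoothness of $G/P$) to get surjectivity of the bottom map, then simply asserts that the fiber description ``follows from the description of the fibers of the top map''. Your argument makes this last step explicit: the Zariski-local section $s : U_i \to G$ gives $\alpha = s(\beta)\cdot\gamma$ with $s(\beta)\in G(\O)$, so $G(\O)$-invariance of $\val$ yields the product decomposition $\val^{-1}(\xi)\cap \varphi^{-1}(U_i)(\O) = U_i(\O)\times\bigl(V(\DF)_\Gamma\cap\val^{-1}(\xi)\bigr)$, which one then truncates. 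This is exactly what the paper's sentence is gesturing at, but you spell out the local triviality rather than leaving it implicit.

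One small slip: your parenthetical ``for $q$ large enough compared to $\ell$ and the orders appearing in $\xi$'' is unnecessary and slightly misleading. You do not need $\val^{-1}(\xi)$ to be determined at level $q$; you only need to truncate the product identity $U_i(\O)\times\bigl(V(\DF)_\Gamma\cap\val^{-1}(\xi)\bigr)$, and $\tronc_q$ of a product is the product of the truncations. Since $U_i$ is smooth, $\tronc_q(U_i(\O)) = \L_q(U_i)$, and the intersection with $\L_q(\varphi^{-1}(U_i))$ behaves correctly because $\varphi^{-1}(U_i)$ is open. So the statement holds for every $q\in\NN$, as the lemma claims.
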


\begin{proof}
	We have a commutative diagram
	\begin{center}
	\begin{tikzpicture}[description/.style={fill=white,inner sep=2pt}]
			\matrix (m) [matrix of math nodes, row sep=3em,column sep=2.5em, text height=1.5ex, text depth=0.25ex]
		{ 	\val^{-1}(\xi) &  \L(G/P) \\
			\tronc_q(\val^{-1}(\xi)) & \L_q(G/P) \\
		};
		\path[-]
			(m-1-1) edge[->] (m-1-2)
			(m-1-1) edge[->] (m-2-1)
			(m-1-2) edge[->] (m-2-2)
			(m-2-1) edge[->] (m-2-2);
	\end{tikzpicture}
	\end{center}
	where the vertical maps are arc truncations and the horizontal maps are induced by $\varphi : X \to G/P$. The vertical map on the left-hand side is clearly surjective, and the other also is since $G/P$ is smooth (see \cite{Greenberg1966:henselian}). Finally, the top horizontal map is surjective by the proof of Theorem~\ref{t:arc-horo}, hence so is the bottom map. The last claim in the lemma follows from the description of the fibers of the top map.
\end{proof}

Using Lemma~\ref{l:surj-arc} we now show that the fibers of $\val$ are cylinders, hence measurable. Lemma~\ref{l:vol-hor} deals with the horizontal fibers, while Lemma~\ref{l:vol-ver} takes care of the vertical fibers. 

\begin{lemma}\label{l:vol-hor} Let $X$ be as in Notation~\ref{n:setting-arc}.
	Assume that $X$ is smooth of dimension $d$ and that $\DF$ has trivial coloration. Consider $\xi := (y,\nu,0) \in |\DF|_\Gamma \cap \N$.
	Let $n$ be the rank of the lattice $N$, $\sigma\in \Sigma(\DF)$ be a cone containing $\nu$ and $r := \dim \sigma$. The linear part $\sigma^\vee \cap (-\sigma^\vee)$ of $\sigma^\vee$ is a $(d-1-r)$-dimensional vector space $V_\QQ$. Let $W_\QQ$ be a complement of $V_\QQ$ in $M_\QQ$ and $u_1,\dots,u_r$ be an integral basis of $\sigma^\vee \cap W_\QQ$.
	
	Then for any $q \geq \max (\{ \langle u_j,\nu \rangle \mid 1 \leq j \leq r \})$, the set $\val^{-1}(\xi) \cap V(\DF)_\Gamma$ is a cylinder with $q$-basis 
	\[
		\L_q(\Gamma) \times (\AA^1 \setminus 0)^{n} \times \AA^{qn-\sum_{j=1}^r \langle u_j,\nu \rangle}.
	\]
\end{lemma}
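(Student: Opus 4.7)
The plan begins by translating the claim to the torus case via Proposition~\ref{p:T-varieties}. Since $\ell = 0$, the fiber equals $\Gamma(\O) \times \C_\nu$, with $\C_\nu \subset X_\sigma(\O) \cap T(\K)$ the $T(\O)$-orbit from Lemma~\ref{l:decomposition-hor}. Because $\Gamma$ is smooth, $\Gamma(\O) = \L(\Gamma)$ is itself a cylinder with $q$-basis $\L_q(\Gamma)$ for every $q$. It therefore suffices to exhibit $\C_\nu$ as a cylinder of $\L(V(\DF))$ with $q$-basis $(\AA^1 \setminus 0)^n \times \AA^{qn - \sum_{j=1}^r \langle u_j, \nu \rangle}$ for $q \geq \max_j \langle u_j, \nu \rangle$.

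Next I reduce to a toric calculation using smoothness. The identity $X = G \times^{P} V(\DF)$ together with local triviality of $G \to G/P$ imply that $V(\DF)$ is smooth. Along the $T$-orbit of $V(\DF)$ attached to $\sigma$ above a point $y \in \Gamma$ (which is not a special point, by hypothesis), the local structure of $V(\DF)$ is a product $\Gamma \times X_\sigma$, so smoothness forces $\sigma$ to be a regular cone. Let $e_1, \ldots, e_r$ be its primitive generators, extended to a $\ZZ$-basis $e_1, \ldots, e_n$ of $N$, and let $u_1, \ldots, u_n$ be the dual basis of $M$. Then $u_1, \ldots, u_r$ span $\sigma^\vee \cap W_\QQ$ and $u_{r+1}, \ldots, u_n$ span $V_\QQ = \sigma^\perp$, providing precisely the vectors of the statement.

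The truncation of $\C_\nu$ then becomes a direct toric exercise. With $X_\sigma \cong \AA^r \times (\AA^1 \setminus 0)^{n-r}$ and coordinates $\chi^{u_1}, \ldots, \chi^{u_n}$, an arc $\alpha \in \C_\nu$ is specified by
\[
\alpha^\star(\chi^{u_j}) \;=\; t^{\langle u_j, \nu \rangle}\, \omega_\alpha(u_j), \qquad \omega_\alpha(u_j) \in \O^\star,
\]
with $\langle u_j, \nu \rangle \geq 0$ for $j \leq r$ and $\langle u_j, \nu \rangle = 0$ for $j > r$. For $q \geq \max_j \langle u_j, \nu \rangle$, the $q$-jet of $\alpha^\star(\chi^{u_j})$ is parametrized by an invertible leading coefficient in $\AA^1 \setminus 0$ together with $q - \langle u_j, \nu \rangle$ free coefficients in $\AA^1$ (reducing to $q$ free coefficients for $j > r$), independently across $j$. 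The product yields exactly $(\AA^1 \setminus 0)^n \times \AA^{qn - \sum_{j=1}^r \langle u_j, \nu \rangle}$ at the $K$-point level, and stability $\pi_q^{-1}(\pi_q(\C_\nu)) = \C_\nu$ is immediate since the higher-order coefficients are unconstrained and do not affect the triple $(y, \nu, 0)$ attached to $\alpha$.

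The main subtle step is the passage from the global arc space $V(\DF)_\Gamma$ to the local toric model on $X_\sigma$: this uses both $\Gamma \cap \Sp(\DF) = \emptyset$ (to obtain the local product structure $\Gamma \times X_\sigma$) and the smoothness of $X$ (to force $\sigma$ to be regular, so that Ishii-type toric coordinates $\chi^{u_j}$ are available). Once this local identification is secured, multiplying the $q$-basis of $\C_\nu$ with $\L_q(\Gamma)$ from the $\Gamma(\O)$-factor delivers the claimed cylinder structure for $\val^{-1}(\xi) \cap V(\DF)_\Gamma$, completing the argument.
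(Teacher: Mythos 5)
Your argument is correct and follows essentially the same structure as the paper's proof: restrict to the $T$-variety factor $V(\DF)$, use smoothness to conclude that $\sigma$ is a regular cone (the paper cites \cite[Chap.~II]{KKMS73} for this, you argue via local triviality of $G\to G/P$ and the product structure over $\Gamma$), split the arc into the $\Gamma$-component and the $M$-component, and analyze each. The one genuine difference is that the paper handles the toric component $\C_\nu$ by invoking an adapted version of \cite[Lemma~3.4]{Batyrev-Moreau} (a pre-existing computation of $T(\O)$-orbit $q$-bases for toric varieties associated with possibly non-full-dimensional cones), whereas you carry out the jet-level count by hand in the regular toric coordinates $\chi^{u_1},\dots,\chi^{u_n}$, with the invertibility of the $u_j$ for $j>r$ handled automatically. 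Your explicit version is more self-contained; the paper's version is shorter but relies on the reader translating the cited toric lemma to the non-full-dimensional-cone setting. Both are valid, and your stability argument for the cylinder property --- that any lift of a $q$-jet of $\alpha\in\C_\nu$ is again in $\C_\nu$ because $q\geq\max_j\langle u_j,\nu\rangle$ pins down the leading term of each $\alpha^\star(\chi^{u_j})$ --- is the right one.
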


\begin{proof}
	By the smoothness criterion of \cite[Chap. II]{KKMS73}, any cone $\sigma \in \Sigma(\DF)$ is generated by part of a basis of $N$. Consider $\alpha \in \val^{-1}(\xi) \cap V(\DF)_\Gamma$ and  the morphisms $\alpha^*_\Gamma : \CC[\Gamma] \to \O$ and $\alpha^*_M : \CC[\sigma^\vee \cap M] \to \O$ from Remark~\ref{r:data-arc}.
	The maps $\alpha^*_\Gamma$ and $\alpha^*_M$ induce arcs on $\Gamma$ and $X_\sigma$, which can be truncated to $q$-jets $\alpha_\Gamma' \in \L_q(\Gamma)$ and $\alpha_M' \in \L_q(X_\sigma)$. Moreover the set $\{ \alpha_\Gamma' \mid \alpha \in \val^{-1}(\xi) \cap V(\DF)_\Gamma \}$ is isomorphic to $\L_q(\Gamma)(K)$ since $\Gamma$ is smooth. To conclude we use an adapted version of \cite[Lemma 3.4]{Batyrev-Moreau} where we do not assume the cone to be full-dimensional. The result of the lemma implies that
	\[
		\{ \alpha_M' \mid \alpha \in \val^{-1}(\xi) \cap V(\DF)_\Gamma \} \simeq (\AA^1(K) \setminus 0)^{n} \times \AA^{qn-\sum_{j=1}^r \langle u_j,\nu \rangle}(K). \qedhere
	\]
\end{proof}

\begin{lemma}\label{l:vol-ver} Let $X$ be as in Notation~\ref{n:setting-arc}. Assume that $X$ is smooth of dimension $d$, that $\DF$ has trivial coloration, and denote by $n$ the rank of the lattice $N$. Consider $\xi := (y,\nu,\ell) \in |\DF|_\Gamma \cap \N$ such that $\ell \geq 1$. 
	Let $(\D,\F) \in \DF$ be such that $\xi \in C_\Gamma(\D)$ and let $\sigma$ be the tail of $\D$. 
	Denote by $r$ the dimension of $\sigma$ and by $s \geq r+1$ that of the Cayley cone $C_y(\D)$. The linear part $C_y(\D)^\vee \cap (-C_y(\D)^\vee)$ of $C_y(\D)^\vee$ is a $(d-s)$-dimensional vector space $V_\QQ$. Let $W_\QQ$ be a complement of $V_\QQ$ in $\Lambda_\QQ = M_\QQ \oplus \QQ$ and denote by $\{ (u_1,0),\dots,(u_{r},0), w_{1},\dots, w_{s-r} \}$
	an integral basis of $C_y(\D)^\vee \cap W_\QQ$, where the $u_i$ are in $\sigma^\vee \cap M$.
	
	Then for any $q$ greater than $\max \left(\{ \langle u_j,\nu \rangle \mid 1 \leq j \leq r \} \cup \{ \langle w_{j},(\nu,\ell) \rangle \mid 1 \leq j \leq s-r \}\right)$, the set $\val^{-1}(\xi) \cap V(\DF)_\Gamma$ is a cylinder with $q$-basis
	\[
		(\AA^1 \setminus 0)^{n+1} \times \AA^{q(n+1)-(\sum_{j=1}^r \langle u_j,\nu \rangle + \sum_{j=1}^{s-r} \langle w_{j},(\nu,\ell) \rangle)}.
	\]
\end{lemma}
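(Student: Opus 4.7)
The plan is to adapt the argument of Lemma~\ref{l:vol-hor} to the vertical situation, where the closed point of the arc maps into the fiber of $V(\DF)$ over the special point $y \in C_0 \setminus \Gamma$, so that $\ell \geq 1$ forces the curve direction to enter the picture. Since $X$ is smooth of dimension $d = n+1$, smoothness of the affine $T$-chart $\Spec A(C^i,\D)$ of $V(\DF)$ at the closed $T$-orbit over $y$ forces, via the Cayley-cone analogue of the toric smoothness criterion from \cite[Chap.~II]{KKMS73}, the cone $C_y(\D)$ to be generated by part of a $\ZZ$-basis of $\Lambda = N \oplus \ZZ$. In particular the basis $(u_1,0),\dots,(u_r,0),w_1,\dots,w_{s-r}$ of $C_y(\D)^\vee \cap W_\QQ$ exists as in the statement; together with a $\ZZ$-basis of $V_\QQ \cap \Lambda$, it yields coordinates on a smooth local chart isomorphic to $\AA^s \times (\CC^\star)^{d-s}$.

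By Lemma~\ref{l:decomposition-ver}, any $\alpha \in \val^{-1}(\xi) \cap V(\DF)_\Gamma$ is uniquely determined by a pair $(\omega,u)$, where $\omega : M \to \O^\star$ is a group homomorphism and $u = \qfun_\alpha(\pi) \in \O^\star$ for a uniformizer $\pi$ at $y$. Under the smooth toric identification, this translates into independent prescriptions of the pullbacks of the coordinate functions: for $1 \leq i \leq r$, the pullback $\alpha^\star(\chi^{(u_i,0)})$ is an element of $\O$ of exact $t$-valuation $\langle u_i,\nu\rangle$; for $1 \leq j \leq s-r$, the pullback $\alpha^\star(\chi^{w_j})$ is an element of $\O$ of exact $t$-valuation $\langle w_j,(\nu,\ell)\rangle$; and along each of the $d-s$ directions of $V_\QQ$ the arc datum is simply a unit in $\O^\star$.

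Truncation to level $q$ then assembles this into the claimed product. Provided $q$ exceeds every valuation appearing above, the truncation modulo $t^{q+1}$ of a power series in $\O$ of prescribed valuation $v$ is parametrized by a nonzero leading coefficient (a factor $\AA^1 \setminus 0$) together with $q-v$ free subsequent coefficients (a factor $\AA^{q-v}$), while a unit in $\O^\star$ truncates to a factor $\AA^1 \setminus 0$ times $\AA^q$. Taking the product over the $s$ monomial coordinates and the $d-s$ unit directions gives $s+(d-s)=n+1$ factors $\AA^1 \setminus 0$ together with a total affine dimension
\[
\sum_{i=1}^r (q-\langle u_i,\nu\rangle) + \sum_{j=1}^{s-r}(q-\langle w_j,(\nu,\ell)\rangle) + (d-s)q = q(n+1) - \Bigl(\sum_{i=1}^r \langle u_i,\nu\rangle + \sum_{j=1}^{s-r}\langle w_j,(\nu,\ell)\rangle\Bigr),
\]
matching the claim. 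The cylinder condition is then automatic, as the description is compatible with the transition maps between $q$-jets for all $q$ above the stated bound.

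The main obstacle is justifying that the smooth local identification transports the parametrization $(\omega,u)$ of Lemma~\ref{l:decomposition-ver} faithfully into genuinely independent coordinate-wise arc data. Concretely this requires a Cayley-cone analogue of the affine-toric arc description underlying \cite[Lemma~3.4]{Batyrev-Moreau}, where the extra $\O^\star$-factor coming from the curve variable couples to the monomial pullbacks through the values $\langle w_j,(\nu,\ell)\rangle = \langle m_j,\nu\rangle + k_j\ell$ whenever $w_j = (m_j,k_j)$. Once this coordinatewise independence is secured, the dimension count and cylinder verification proceed as above.
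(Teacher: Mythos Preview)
Your proposal is correct and follows essentially the same strategy as the paper. The paper's proof makes the step you flag as the ``main obstacle'' completely explicit: it identifies $\val^{-1}(\xi) \cap V(\DF)_\Gamma$ with the $(T \times \mathbb{G}_m)(\O)$-orbit $\C_{(\nu,\ell)}$ inside $X_{C_y(\D)}(\O) \cap (T \times \mathbb{G}_m)(\K)$, where $X_{C_y(\D)} = \Spec \CC[C_y(\D)^\vee \cap \Lambda]$ is the toric variety attached to the Cayley cone. The bijection is established via the formal completion argument from \cite[Theorem 7.16]{Eisenbud:CommAlg} (already used in Lemma~\ref{l:decomposition-ver}): an arc $\alpha$ in the fiber induces $\CC[[\pi]] \otimes_\CC \CC[M] \to \K$, which restricts to $\CC[C_y(\D)^\vee \cap \Lambda] \to \O$; conversely, an arc on $X_{C_y(\D)}$ with the right valuation data extends through the completion of $(\CC[\pi,\pi^{-1}],\pi\CC[\pi,\pi^{-1}])$ back to an arc in the fiber. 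Once this identification is in place, the truncation computation is exactly the adapted version of \cite[Lemma 3.4]{Batyrev-Moreau} you invoke, and your dimension count is the same as theirs. So your outline is right; the paper simply packages your coordinatewise argument as a single reduction to the toric case for the cone $C_y(\D)$.
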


\begin{proof}
	Let $\pi \in \CC(C)^*$ be a uniformizer of $y$. Writing $\Lambda = M \oplus \ZZ$ we have 
	\[
		\CC[C_y(\D)^\vee \cap \Lambda] = \{ \pi^k \otimes \chi^m \mid (m,k) \in C_y(\D)^\vee \cap \Lambda \}.
	\]
	Denote by $X_{C_y(\D)} := \Spec \CC[C_y(\D)^\vee \cap \Lambda]$ the associated toric variety. The restricted fiber $\val^{-1}(\xi) \cap V(\DF)_\Gamma$ identifies with the $(T \times \mathbb{G}_m)(\O)$-orbit $C_{\nu,\ell}$ of $X_{C_y(\D)}(\O) \cap (T \times \mathbb{G}_m)(\K)$ given in \cite[Theorem 4.1]{Ishii:ArcSpaceToric}.
	
	Indeed, as in the proof of Lemma~\ref{l:decomposition-ver}, any arc $\alpha$ in the restricted fiber induces a morphism $\CC[[\pi]] \otimes_\CC \CC[M] \to \K$, which restricts to $\CC[C_y(\D)^\vee \cap \Lambda] \to \O$. Conversely, if we have an arc $\beta \in \L(X_{C_y(\D)})(K)$ with the property above, it induces a morphism $\CC[\pi,\pi^{-1}] \otimes_\CC \CC[M] \to \K$. Since the completion of $(\CC[\pi,\pi^{-1}],\pi \CC[\pi,\pi^{-1}])$ is $\CC((\pi))$, by \cite[Theorem 7.16]{Eisenbud:CommAlg} $\beta$ gives a morphism $\CC[[\pi]] \otimes_\CC \CC[M] \to \K$ which restricts to a co-morphism of an arc in $\val^{-1}(\xi) \cap V(\DF)_\Gamma$. The rest of the proof follows from the suitably modified version of \cite[Lemma 3.4]{Batyrev-Moreau} already used in Lemma~\ref{l:vol-hor}.
\end{proof}

The next result gives the motivic volume of the fibers of the surjective map $\val : X_\Gamma \to |\DF|_\Gamma \cap \N$. Thus we obtain, up to a subset of motivic measure zero, a decomposition of the arc space of $X$ as a disjoint union of measurable subsets of known motivic volume, which concludes our study of $\L(X)$. 

\begin{theorem}\label{t:volume}
	Let $\DF$ be a colored divisorial fan on $(C,G/H)$ such that $X=X(\DF)$ is smooth of dimension $d$. Without loss of generality, we may assume that $\DF$ has trivial coloration.
	
	Consider $\xi := (y,\nu,\ell) \in |\DF|_\Gamma \cap \N$, where $|\DF|_\Gamma := \bigcup_{(\D,\F) \in \DF} C_\Gamma(\D) \subset \N_\QQ$ and $\N, \N_\QQ$ are as in Definition~\ref{d:hyperspace}. Let $(\D,\F) \in \DF$ be such that $\xi \in C_\Gamma(\D)$ and let $\sigma$ be the tail of $\D$. Denote by $r$ the dimension of $\sigma$ and by $s \geq r+1$ be that of the Cayley cone $C_y(\D)$. The linear part $C_y(\D)^\vee \cap (-C_y(\D)^\vee)$ of $C_y(\D)^\vee$ is a $(d-s)$-dimensional vector space $V_\QQ$. Let $W_\QQ$ be a complement of $V_\QQ$ in $\Lambda_\QQ = M_\QQ \oplus \QQ$ and denote by $\{ (u_1,0),\dots,(u_{r},0),w_{1},\dots, w_{s-r}\}$ an integral basis of $C_y(\D)^\vee \cap W_\QQ$, where the $u_i$ are in $\sigma^\vee \cap M$. Then 
	\begin{equation*}
		\mu_X(\val^{-1}(\xi)) = \begin{cases}
		 [\Gamma] [G/H] \LL^{-\sum_{j=1}^r \langle u_j,\nu \rangle} & \text{if $\ell = 0,$} \\
		 [G/H] (\LL-1) \LL^{-\sum_{j=1}^s \langle u_j,\nu \rangle - \sum_{j=1}^{s-r} \langle w_{j}, (\nu, \ell)\rangle} & \text{if $\ell \geq 1$.}
														\end{cases}
	\end{equation*}
\end{theorem}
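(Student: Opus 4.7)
The plan is to reduce the computation to the $T$-variety case already handled by Lemmas~\ref{l:vol-hor} and~\ref{l:vol-ver}, by combining the parabolic induction structure $X \simeq G\times^P V(\DF)$ with the bundle description of Lemma~\ref{l:surj-arc}. First I would replace $X$ by $G\times^P V(\tilde\DF)$ via the composition of the discoloration morphism~\eqref{e:discolor} and the affinization morphism~\eqref{e:Etilde}. Both are proper, $G$-equivariant, birational, and restrict to the identity on $\Gamma\times G/H$; hence by the valuative criterion of properness (as in the proof of Theorem~\ref{t:arc-horo}), they induce bijections of the corresponding $X_\Gamma$ which respect $\val$ and its fibers functorially. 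Thus I may assume that $\DF = \tilde\DF$ has trivial coloration and $X = G\times^P V(\DF)$.

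Next, for any $q$ sufficiently large, Lemma~\ref{l:surj-arc} asserts that $\tronc_q(\val^{-1}(\xi))$ is a bundle over $\L_q(G/P)$ with fiber isomorphic to $\tronc_q'(V(\DF)_\Gamma \cap \val^{-1}(\xi))$. Since $G$ is simply connected, the projection $G\to G/P$ is a Zariski-locally trivial $P$-bundle (trivialized over the big Bruhat cell), so the parabolic induction $X\to G/P$ and its jet bundles are Zariski-locally trivial. By the scissor relations this gives
\[
	[\tronc_q(\val^{-1}(\xi))] \;=\; [\L_q(G/P)] \cdot [\tronc_q'(V(\DF)_\Gamma \cap \val^{-1}(\xi))].
\]
Because $G/P$ is smooth of dimension $e := \dim G/P$, we have $[\L_q(G/P)] = [G/P]\,\LL^{qe}$. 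I would then substitute the explicit cylinder formulas provided by Lemma~\ref{l:vol-hor} (when $\ell = 0$) and Lemma~\ref{l:vol-ver} (when $\ell \geq 1$) for the $q$-basis on the right-hand factor.

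Finally, with $d = \dim X = e + 1 + n$ where $n = \mathrm{rk}\, N$, the $\LL$-powers $qe$, $q$ or $0$, and $q(n+\epsilon)$ coming respectively from $[\L_q(G/P)]$, $[\L_q(\Gamma)]$ or its absence, and the toric fiber telescope against $\LL^{-qd}$ in the definition $\mu_X(\val^{-1}(\xi)) = [\tronc_q(\val^{-1}(\xi))]\,\LL^{-qd}$, leaving only the negative $\langle u_j,\nu\rangle$ and $\langle w_j,(\nu,\ell)\rangle$ terms in the exponent. Invoking the identity $[G/H] = [G/P]\cdot (\LL-1)^n$, which follows from the Zariski-local triviality of the torus fibration $G/H\to G/P$ with fiber $T = (\CC^\star)^n$, converts $[G/P](\LL-1)^n$ into $[G/H]$ in the horizontal case and $[G/P](\LL-1)^{n+1}$ into $[G/H](\LL-1)$ in the vertical case, yielding the two formulas. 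The main technical point is the Zariski-local triviality of the jet bundle $\tronc_q(\val^{-1}(\xi))\to \L_q(G/P)$, needed for multiplying classes in $K_0(\Var)$; once that is secured, the rest is a careful accounting of exponents.
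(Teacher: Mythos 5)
Your proposal is correct and follows essentially the same route as the paper: reduce to $\DF = \tilde\DF$ with trivial coloration via discoloration and affinization, apply Lemma~\ref{l:surj-arc} to express $\tronc_q(\val^{-1}(\xi))$ as a bundle over $\L_q(G/P)$ with fiber computed by Lemmas~\ref{l:vol-hor} and~\ref{l:vol-ver}, then simplify using $\dim G/P = d-1-n$, $[\L_q(G/P)] = [G/P]\LL^{q\dim G/P}$, $[\L_q(\Gamma)] = [\Gamma]\LL^q$, and $[G/H] = [G/P](\LL-1)^n$. The only differences are cosmetic: you spell out the Zariski-local triviality of $G\to G/P$ (trivialized over the Bruhat cells) to justify the multiplicativity of classes in $K_0(\Var)$, whereas the paper implicitly invokes the piecewise-trivial-fibration property from Lemma~\ref{l:surj-arc}; and your remark that simple connectedness is what makes $G\to G/P$ locally trivial is unnecessary (Bruhat decomposition already yields Zariski-local triviality for any parabolic of any reductive $G$). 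Your exponent bookkeeping is correct and, incidentally, shows that the exponent $-\sum_{j=1}^{s}\langle u_j,\nu\rangle$ in the $\ell\ge 1$ case of the theorem statement should read $-\sum_{j=1}^{r}\langle u_j,\nu\rangle$ to match Lemma~\ref{l:vol-ver}.
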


\begin{proof}
	For $\ell=0$, since $G/P$ is smooth, by Lemmas~\ref{l:surj-arc} and~\ref{l:vol-hor} it follows that for large enough $q\geq 0$,
	\begin{align*}
		\mu_X(\val^{-1}(C,\nu,0)) &= [\tronc_q'\left(V(\DF)_\Gamma \cap \val^{-1}(\xi)\right)][\L_{q}(G/P)]\LL^{-qd} \\
		&= [\L_{q}(\Gamma)](\LL - 1)^{n}[G/P]\LL^{q(n+\dim(G/P) -d) - \sum_{j=1}^r \langle u_j,\nu \rangle}
	\end{align*}
	The result for $\ell=0$ follows from the equalities $\dim(G/P) = d-1-n$, $[G/H] = (\LL - 1)^{n}[G/P]$, and $\L_{q}(\Gamma)= [\Gamma]\LL^{q}$. For the case $\ell\geq 1$, we have by Lemmas ~\ref{l:surj-arc} and ~\ref{l:vol-ver} the equality
$$\mu_X(\val^{-1}(\xi)) = (\LL - 1)^{n+1}[G/P]\LL^{q(n+1+\dim(G/P)-d) -\sum_{j=1}^s \langle u_j,\nu \rangle - \sum_{j=1}^{s-r} \langle w_{j}, (\nu, \ell)\rangle}$$
which simplifies to our desired formula.   
\end{proof}

\section{Computing the stringy $E$-function}\label{s:E-function}

Let $X = X(\DF)$ be a log-terminal horospherical variety of complexity one.
In Theorem~\ref{t:stringy-volume}, we compute the stringy motivic volume of $X$. As in the complexity zero case, see~\cite{Batyrev-Moreau}, Theorem~\ref{t:stringy-volume} requires $X$ to be $\QQ$-Gorenstein and log terminal; we introduce the related notions in~\ref{s:support}. In~\ref{s:desingularization} we construct a desingularization of $X$ in terms of its colored divisorial fan, and in~\ref{s:discrepancy}, we put the previous results to use by computing the discrepancy. This enables us to prove Theorem~\ref{t:stringy-volume} in~\ref{s:stringy}.
Throughout this section we follow the conventions of \ref{n:setting-arc}.

\subsection{Canonical class}\label{s:canonical}

The canonical class $K_X$ of $X$ can be expressed as a linear combination of $B$-invariant divisors, see \cite[Sections 2.3, 2.4]{LangloisTerpereau}. 
By the description in \cite[Section 16]{TimashevBook} there are two types of $G$-divisors called \emph{vertical} and \emph{horizontal}, depending on whether the $G$-action has complexity zero or one. The set of \emph{vertical} $G$-divisors on $X$ is parametrized by
\[
	\Vert (\DF) := \bigcup_{i \in J} \Vert(\D^i)
\]
where for any $i$ in $J$, $\Vert(\D^i)$ (equal to $\Vert(\D^i,\F^i)$) is the set of pairs 
$(y,p)$ such that $y$ is in $C^i$ and $p$ is a vertex of $\Delta_y^i$. 
The rest of the $G$-invariant divisors of $X$ are \emph{horizontal} and parametrized by
\[
	\Ray (\DF) := \bigcup_{i \in J} \Ray(\D^i,\F^i)
\]
where for any $i$ in $J$, we denote by $\Ray(\D^i,\F^i)$ the set of rays $\rho$ of $\sigma_i$ such that $\rho \cap \varrho(\F^i)$ is empty (namely the ray $\rho$ is uncolored) and 
$\rho \cap \deg (\D^i)$ is empty if $C^i$ is projective.
Here we used the same notation $\rho$ for a ray and for the corresponding primitive generator.

\begin{theorem}[{\cite[Theorem 2.11]{LangloisTerpereau}}]\label{t:G-div}
	Let $\Div(\DF)$ denote the set of $G$-divisors of $X$. There exists a bijection 
	\[
		\begin{array}{ccc}
			\Vert(\DF) \bigsqcup \Ray(\DF) & \to & \Div(\DF) \\
		\end{array}
	\]
	where we denote by $D_{(y,p)}$ -- respectively $D_\rho$ the $G$-divisor
	corresponding to the pair $(y,p)\in \Vert (\DF)$ -- respectively $\rho\in \Ray (\DF)$.	
\end{theorem} 

We may now describe the canonical class of $X$ as a Weil divisor.
\begin{theorem}[{\cite[Theorem 2.18]{LangloisTerpereau}}]\label{t-candiv}
	With the same notation as in Theorem~\ref{t:G-div}, the divisor
	\[
		K_X = \sum_{(y,p) \in \Vert(\DF)} (\kappa(p) b_y + \kappa(p)-1) \cdot D_{(y,p)} - \sum_{\rho \in \Ray(\DF)}  D_\rho - \sum_{\alpha \in \Phi \setminus I} a_\alpha \cdot D_\alpha
	\]
	is a canonical divisor of $X$. Here $K_C = \sum_{y \in C} b_y \cdot [y]$ is a canonical divisor on $C$, and $a_\alpha := \sum_{\beta \in R^+ \setminus R_I} \langle \beta,\alpha^\vee \rangle$, where $R^+$ is the set of positive roots of $G$ and $R_I$ is the set of roots of $P=N_G(H)$.
\end{theorem}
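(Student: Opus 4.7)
The plan is to reduce the assertion to the known formulas for the canonical class of a flag variety and of a torus variety of complexity one (via parabolic induction), and then to glue the resulting local expressions across the simple charts of $\DF$. Since $X = \bigcup_{(\D,\F) \in \DF} X(\D,\F)$ and the canonical class is a local object, and since by Theorem~\ref{t:G-div} the $B$-stable prime divisors meeting each $G$-orbit are the $G$-invariant divisors $D_{(y,p)}$, $D_\rho$ together with the colors $D_\alpha$ for $\alpha \in \Phi \setminus I$, it suffices to identify the correct coefficient on each of these divisors in the restriction of $K_X$ to a single simple chart $X(\D,\F) \cong G \times^{P_\F} Y(\D,\F)$.

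Next, I would exploit the projection $\pi : X(\D,\F) \to G/P_\F$, whose fibers are isomorphic to $Y(\D,\F)$. A standard relative adjunction for locally trivial fibrations gives
\[
	K_{X(\D,\F)} = \pi^\star K_{G/P_\F} + K_\pi,
\]
where $K_\pi$ is the relative canonical divisor. The canonical class of the flag variety $G/P_\F$ is $-\sum_{\alpha \in \Phi \setminus I_\F} \langle 2\rho_\F, \alpha^\vee \rangle \cdot X_\alpha$, where $2\rho_\F = \sum_{\beta \in R^+ \setminus R_{I_\F}} \beta$. Via $D_\alpha = \overline{\Gamma \times p^{-1}(X_\alpha)}$, the pullback $\pi^\star X_\alpha$ is identified with the color $D_\alpha$ for $\alpha \in \Phi \setminus I_\F$. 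For the fiber part, $Y(\D,\F)$ is horospherical of complexity one for the Levi $L$, and the natural $U_L$-fibration onto the Chevalley quotient reduces the computation of $K_{Y(\D,\F)}$ to that of the canonical class of the $T$-variety $\Spec A(C_0,\D)$. This last one is given by the Petersen--S\"u\ss\ (or Liendo--S\"u\ss) formula in terms of $\D$, producing the coefficients $\kappa(p)b_y + \kappa(p)-1$ on each $D_{(y,p)}$ and $-1$ on each $D_\rho$ for $\rho \in \Ray(\D,\F)$.

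The hard part is the bookkeeping for the colors. On the chart $X(\D,\F)$, the colors split into two groups: those belonging to $\F$ (corresponding to roots in $I_\F \setminus I$), which appear as horizontal $B$-stable divisors inside the fiber $Y(\D,\F)$ and pick up their coefficient from the canonical class of the Levi-horospherical variety $L/H_L$ embedded in $Y(\D,\F)$; and those not in $\F$ (corresponding to $\alpha \in \Phi \setminus I_\F$), which pick up their coefficient from $\pi^\star K_{G/P_\F}$. One must verify that both contributions evaluate to exactly $-a_\alpha = -\sum_{\beta \in R^+ \setminus R_I}\langle \beta,\alpha^\vee\rangle$, so that the two cases merge into the uniform sum $-\sum_{\alpha \in \Phi \setminus I} a_\alpha \cdot D_\alpha$. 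This hinges on the decomposition $R^+ \setminus R_I = (R^+ \setminus R_{I_\F}) \sqcup (R_{I_\F} \setminus R_I)$ and the observation that, for $\alpha \in I_\F \setminus I$, the pairing $\sum_{\beta \in R_{I_\F} \setminus R_I}\langle \beta,\alpha^\vee\rangle$ computes precisely the color contribution coming from the canonical class of $L/H_L$. Once this uniform local formula is established on one chart, the agreement across overlaps, and hence the stated global expression for $K_X$, follows automatically from the compatibility conditions satisfied by a colored divisorial fan.
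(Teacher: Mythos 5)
The paper does not actually prove this theorem: it imports it verbatim from \cite[Theorem 2.18]{LangloisTerpereau}. So there is no internal proof to compare against, and I have to judge your proposal on its own merits. Your overall plan — reduce to a chart $X(\D,\F)\cong G\times^{P_\F}Y(\D,\F)$, use the projection $\pi$ to $G/P_\F$, compute the fiber part by reduction to $T$-varieties, then assemble — is a reasonable route, and correctly handles the $G$-stable divisors $D_{(y,p)}$, $D_\rho$ (which are horizontal for $\pi$) as well as the colors $D_\alpha$ for $\alpha\in I_\F\setminus I$ (where your $s_\alpha$-invariance observation gives $a_\alpha=\sum_{\beta\in R_{I_\F}^+\setminus R_I^+}\langle\beta,\alpha^\vee\rangle$ as needed).

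However, there is a genuine gap in the treatment of the colors $D_\alpha$ with $\alpha\in\Phi\setminus I_\F$. You assert that these ``pick up their coefficient from $\pi^\star K_{G/P_\F}$,'' i.e.\ that the coefficient is $-a^{(\F)}_\alpha:=-\sum_{\beta\in R^+\setminus R_{I_\F}}\langle\beta,\alpha^\vee\rangle$, and then you hope this merges with the formula $-a_\alpha=-\sum_{\beta\in R^+\setminus R_I}\langle\beta,\alpha^\vee\rangle$. But these two numbers differ by $\sum_{\beta\in R_{I_\F}^+\setminus R_I^+}\langle\beta,\alpha^\vee\rangle$, which is in general nonzero (and nonpositive) when $\alpha\notin I_\F$ while $\beta$ is supported on $I_\F$. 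A concrete counterexample to the implicit identity $a_\alpha^{(\F)}=a_\alpha$: take $G=\SL_3$, $I=\emptyset$, $I_\F=\{\alpha_1\}$, $\alpha=\alpha_2$. Then $a_{\alpha_2}=2$ (from $K_{G/B}=-2\rho$), whereas $a_{\alpha_2}^{(\F)}=3$ (from $K_{\PP^2}=-3H$). The root of the problem is the hidden assumption that the $B$-stable relative canonical divisor $K_\pi=K_{X(\D,\F)}-\pi^\star K_{G/P_\F}$ is supported only on $\pi$-horizontal divisors; this fails, since it has a nontrivial component along the vertical colors $D_\alpha$, $\alpha\notin I_\F$ (in the example above, $K_\pi=-2X_{\alpha_1}+X_{\alpha_2}$ on $G/B$). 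A cleaner way to pin down the color coefficients, independent of the chart and of $\F$, is to restrict $K_X$ to the dense open $G$-orbit $\Gamma'\times G/H$, where a $B$-semi-invariant rational top form factors as an exterior product; this reduces to Brion's formula $K_{G/H}=p^\star K_{G/P}=-\sum_{\alpha\in\Phi\setminus I}a_\alpha D_\alpha$, giving the coefficient $-a_\alpha$ uniformly. Parabolic induction via $\pi$ should then be reserved for the boundary $G$-stable divisors, where it does work.
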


\subsection{Cartier divisors and support functions}\label{s:support}
We recall the combinatorial description of invariant Cartier divisors on horospherical varieties of complexity one from~\cite[Section 17]{TimashevBook} and \cite[Corollary 2.17]{LangloisTerpereau}, via functions on hypercones called \emph{support functions}. We refer to~\cite{PetersenSuss} for the setting of torus actions. Then, we give a criterion for these varieties to have $\QQ$-Gorenstein or log terminal singularities.

Denote by $\CC(X)^{(B)}$ the set of $B$-eigenfunctions in the function field $\CC(X)$ of $X$, which identifies with a subset of the tensor product $\CC(C) \otimes_\CC \CC[M]$. The principal divisor associated with a $B$-eigenfunction $f \otimes \chi^m$ in $\CC(C) \otimes_\CC \CC[M]$ is given by
\[
	\div(f \otimes \chi^m) = \sum_{(y,p) \in \Vert(\DF)} \kappa(p) \left( \langle m,p \rangle + \ord_y(f) \right) \cdot D_{(y,p)} + \sum_{\rho \in \Ray(\DF)} \langle m,\rho \rangle \cdot D_\rho + \sum_{D \in \F_{G/H}} \langle m,\varrho(D) \rangle \cdot D,
\]
where $\kappa(p) = \min \{ \lambda \in \ZZ_{>0} \mid \lambda \cdot p \in N \}$.
Cartier divisors are locally principal divisors. Invariant Cartier divisors on $X$ will be associated to some \emph{support functions} defined on the Cayley cones $C_y(\D^i)$.

\begin{definition}\label{d:support-fun} An \emph{integral linear function} on $\D^i$ is a map $\vartheta : C(\D^i) \to \QQ$ such that
\begin{enumerate}[(i)]
	\item for every $y \in C^i$ there exist $m_y \in M$ and $b_y \in \ZZ$ such that
	for any $(\nu,\ell) \in C_y(\D^i)$
		\[
			\vartheta(y,\nu,\ell) = \langle m_y,\nu \rangle +\ell c_y.
		\]
		
	\item if $C^i$ is projective, there exists $m \in M$ such that $m_y=m$ for any $y \in C$, and $f \in \CC(C)^*$ such that
		\[
			\div f = \sum_{y \in C} c_y \cdot [y].
		\]
\end{enumerate}

Denote by $\F_\DF$ the reunion of all the sets $\F^i$ for $i\in J$. A \emph{colored integral piecewise linear function} on $\DF$ is a pair $\theta=(\vartheta,(r_\alpha))$, where $\vartheta : |\DF| \to \QQ$ is a function such that the restriction $\vartheta\mid_{C(\D^i)}$ is integral linear for every $i$ in $J$, $\vartheta\mid_{C(\D^i)}$ and $\vartheta\mid_{C(\D^j)}$ coincide on $C(\D^i) \cap C(\D^j)$ for all $i$ and $j$ in $J$, and where $(r_\alpha)$ is a sequence of integers with $\alpha$ running over simple roots in $\Phi \setminus I$ such that $D_\alpha \not \in \F_\DF$.
More generally we say that $\theta=(\vartheta,(r_\alpha))$ is a \emph{colored piecewise linear function} on $\DF$ if there exists $k \in \ZZ_{>0}$ such that $k\theta$ is a colored integral piecewise linear function. We denote by $\PL(\DF)$ (resp. $\PL(\DF,\QQ)$) the set of colored integral piecewise linear functions (resp. colored piecewise linear functions) on $\DF$. 
\end{definition}

Now the Cartier divisor associated with a colored piecewise linear function $\theta$ in $\PL(\DF)$ is 
\[
	D_\theta = \sum_{(y,p) \in \Vert(\DF)} \vartheta(y,\kappa(p)p,\kappa(p)) D_{(y,p)} +\sum_{\rho \in \Ray(\DF)} \vartheta(C,\rho,0) D_\rho +\sum_{D \in \F_\DF} \vartheta(C,\varrho(D),0) D + \sum_{D_\alpha \not \in \F_\DF} r_\alpha D_\alpha.
\]
\begin{proposition}[{\cite[Corollary 2.19]{LangloisTerpereau}}]\label{p:theta}
	The variety $X(\DF)$ is $\QQ$-Gorenstein (i.e. $K_X$ is $\QQ$-Cartier) if and only if there exists $\theta=(\vartheta,(r_\alpha))$ in $\PL(\DF,\QQ)$ such that the following conditions are satisfied.
	\begin{enumerate}[(i)]
		\item There exists a canonical divisor $K_C = \sum_{y \in C} b_y \cdot [y]$ on $C$ where for every $(y,p)$ in $\Vert(\DF)$ we have			\[
				\vartheta(y,\kappa(p)p,\kappa(p)) = \kappa(p) b_y +\kappa(p) -1.
			\]
		\item For every $\rho$ in $\Ray(\DF)$ we have $\vartheta(C,\rho,0) = -1$.
		\item For every $D_\alpha$ in $\F_\DF$ we have $\vartheta(C,\varrho(D_\alpha),0) = -a_\alpha$.
	\end{enumerate}
\end{proposition}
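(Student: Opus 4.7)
The plan is to identify $K_X$ with $D_\theta$ for a suitable $\theta \in \PL(\DF,\QQ)$, using the isomorphism $\theta \mapsto D_\theta$ between $\PL(\DF,\QQ)$ and the group of $B$-stable $\QQ$-Cartier $\QQ$-divisors on $X = X(\DF)$ recalled just above the statement. Since the representative of $K_X$ produced by Theorem~\ref{t-candiv} is $G$-invariant, and in particular $B$-stable, $X$ is $\QQ$-Gorenstein if and only if this representative lies in the image of the above map, i.e.\ if and only if there exists $\theta \in \PL(\DF,\QQ)$ with $D_\theta = K_X$.

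The heart of the proof is therefore a coefficient-by-coefficient comparison between the formula for $K_X$ given by Theorem~\ref{t-candiv} and the formula for $D_\theta$ recalled in Section~\ref{s:support}. By Theorem~\ref{t:G-div} together with the description of colors, the prime $B$-divisors on $X$ split into four families: vertical $G$-divisors $D_{(y,p)}$ with $(y,p)\in\Vert(\DF)$, horizontal $G$-divisors $D_\rho$ with $\rho\in\Ray(\DF)$, colors $D_\alpha$ with $D_\alpha\in\F_\DF$, and colors $D_\alpha$ with $D_\alpha\notin\F_\DF$. Matching coefficients family by family yields respectively
\[
\vartheta(y,\kappa(p)p,\kappa(p)) = \kappa(p) b_y + \kappa(p) - 1, \quad \vartheta(C,\rho,0) = -1, \quad \vartheta(C,\varrho(D_\alpha),0) = -a_\alpha,
\]
together with the additional equation $r_\alpha = -a_\alpha$ for $D_\alpha \notin \F_\DF$. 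The first three are exactly conditions (i), (ii), (iii) of the statement, while the fourth is a prescription on the free scalars $r_\alpha$ attached to colors outside $\F_\DF$, hence never obstructs the existence of $\theta$.

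For the forward direction, if $X$ is $\QQ$-Gorenstein then the $\theta$ associated via the bijection to $K_X$ must satisfy all four equations, in particular (i)--(iii). For the converse, given $\theta=(\vartheta,(r_\alpha))$ satisfying (i)--(iii), we redefine the scalars $r_\alpha$ attached to colors outside $\F_\DF$ by setting them equal to $-a_\alpha$; the resulting $\theta' \in \PL(\DF,\QQ)$ satisfies $D_{\theta'} = K_X$, so that $K_X$ is $\QQ$-Cartier. The only delicate point is bookkeeping: carefully identifying the primitive integral generators $(\kappa(p) p,\kappa(p))$ of the rays of the Cayley cones $C_y(\D^i)$ associated with vertices $p$ of the polyhedra $\Delta_y^i$, and splitting the sum over $\Phi\setminus I$ appearing in Theorem~\ref{t-candiv} according to the dichotomy inside/outside $\F_\DF$; once this is handled, the equivalence is an immediate consequence of the bijection $\theta\mapsto D_\theta$.
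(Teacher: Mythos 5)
Your proof is correct and takes the expected route; the paper itself does not prove this proposition but simply cites it from [LangloisTerpereau, Corollary~2.19], so there is no in-paper proof to compare against. The strategy of identifying the $B$-stable representative of $K_X$ from Theorem~\ref{t-candiv} with $D_\theta$ under the bijection $\theta \mapsto D_\theta$, extended to $\QQ$-coefficients, and then matching coefficients over the four families of prime $B$-divisors is exactly right, including the observation that the equation $r_\alpha = -a_\alpha$ on the free scalars never obstructs existence (one simply overwrites them in the converse direction).

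One small inaccuracy: you assert that the representative of $K_X$ in Theorem~\ref{t-candiv} is \emph{$G$-invariant}. It is not, since the colors $D_\alpha$ occurring with coefficient $-a_\alpha$ are $B$-stable but not $G$-stable. What is true, and what you actually use, is that the representative is $B$-stable, so the conclusion is unaffected; just drop the claim of $G$-invariance. You should also make explicit (though it is standard) that the property of $K_X$ being $\QQ$-Cartier is independent of the chosen Weil representative, which is why testing the specific $B$-stable representative against the image of $\PL(\DF,\QQ)$ suffices.
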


Let $\theta_X$ be the colored piecewise linear function on $X$ satisfying the conditions of Proposition~\ref{p:theta} and such that $r_\alpha=-a_\alpha$ for any $\alpha$ with $D_\alpha \not \in \F_\DF$. The uniqueness follows from
\cite[Equations (17.1-2)]{TimashevBook}.

The support function $\varpi$ used in Theorem~\ref{t:stringy-volume} is constructed by gluing the following linear functions.
\begin{lemma}\label{l:gorenstein}
	Let $(\D,\F)$ be a colored $\sigma$-polyhedral divisor on $(C,G/H)$ such that $Y=X(\D,\F)$ is $\QQ$-Gorenstein. Let $z$ be a point in the locus of $\D$, and denote by $X(C_z(\D),\F)$ the horospherical $(G\times \CC^*)$-variety associated with the pair $(C_z(\D),\F)$ and the horospherical homogeneous space $G/H \times \CC^*$.  	
	Then there exists a linear function $\omega_{Y,z}$ on $C_z(\D)$ such that
	\begin{enumerate}[(i)]
		\item $\omega_{Y,z}(\rho,0)=\vartheta_Y(z,\rho,0)$ for any uncolored ray $\rho$ of $\sigma$,
		\item $\omega_{Y,z}(\tau)=-1$ for any ray $\tau$ of $C_z(\D)$ not contained in $N_\QQ$,
		\item $\omega_{Y,z}(\varrho(D_\alpha),0) = -a_\alpha$ for any color $D_\alpha \in \F$.
	\end{enumerate}
	In particular, if the locus of $\D$ is affine, we have
	\[
		X(\D,\F) \text{ $\QQ$-Gorenstein} \Rightarrow X(C_z(\D),\F) \text{ $\QQ$-Gorenstein.}
	\]
\end{lemma}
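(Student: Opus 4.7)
The plan is to apply Proposition~\ref{p:theta} to the single colored divisorial fan $\DF = \{(\D, \F)\}$ to produce a support function $\vartheta_Y$ of a canonical divisor of $Y$, and then to modify it by a linear correction in the last coordinate. Concretely, since $Y$ is $\QQ$-Gorenstein, Proposition~\ref{p:theta} furnishes a colored piecewise linear function $\theta_Y = (\vartheta_Y, (r_\alpha))$ and a canonical divisor $K_C = \sum_y b_y [y]$ on $C$ satisfying $\vartheta_Y(y, \kappa(p)p, \kappa(p)) = \kappa(p) b_y + \kappa(p) - 1$ for every vertex $p$ of $\Delta_y$, $\vartheta_Y(C, \rho, 0) = -1$ for $\rho \in \Ray(\DF)$, and $\vartheta_Y(C, \varrho(D_\alpha), 0) = -a_\alpha$ for $D_\alpha \in \F$. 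By Definition~\ref{d:support-fun}(i) the restriction of $\vartheta_Y$ to the Cayley cone $C_z(\D)$ has the form $(\nu, \ell) \mapsto \langle m_z, \nu \rangle + c_z \ell$ for some $m_z \in M_\QQ$ and $c_z \in \QQ$.

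I then propose to set
\[
\omega_{Y, z}(\nu, \ell) := \vartheta_Y(z, \nu, \ell) - (b_z + 1)\, \ell = \langle m_z, \nu \rangle + (c_z - b_z - 1)\, \ell,
\]
which is manifestly a linear function on $C_z(\D)$. Conditions (i) and (iii) of the lemma are immediate, since the correction term vanishes at $\ell = 0$: one finds $\omega_{Y,z}(\rho, 0) = \vartheta_Y(z, \rho, 0)$ for any ray $\rho$ of $\sigma$, and $\omega_{Y,z}(\varrho(D_\alpha), 0) = \vartheta_Y(C, \varrho(D_\alpha), 0) = -a_\alpha$ for each $D_\alpha \in \F$ using Proposition~\ref{p:theta}(iii). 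For condition (ii), any ray $\tau$ of $C_z(\D)$ not contained in $N_\QQ$ has primitive generator of the form $(\kappa(p)p, \kappa(p))$ for some vertex $p$ of $\Delta_z$, and Proposition~\ref{p:theta}(i) gives the direct computation
\[
\omega_{Y, z}(\kappa(p) p, \kappa(p)) = \bigl(\kappa(p) b_z + \kappa(p) - 1\bigr) - (b_z + 1) \kappa(p) = -1,
\]
as required.

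For the final assertion, suppose the locus of $\D$ is affine. Then there is no $\deg \D$ constraint, so $\Ray(\DF)$ coincides with the set of uncolored rays of $\sigma$, and Proposition~\ref{p:theta}(ii) yields $\vartheta_Y(z, \rho, 0) = -1$ for every uncolored $\rho$. Combined with conditions (i)--(iii) just established, this says that $\omega_{Y,z}$ is a $\QQ$-linear function on the single colored cone $(C_z(\D), \F)$ which assigns $-1$ to every uncolored primitive ray of $C_z(\D)$ and $-a_\alpha$ to each colored ray $\varrho(D_\alpha)$ with $D_\alpha \in \F$. These are exactly the conditions characterising the support function of a canonical divisor on the complexity-zero horospherical $(G \times \CC^\star)$-variety $X(C_z(\D), \F)$, so invoking the complexity-zero analogue of Proposition~\ref{p:theta} (see \cite[Section 2.4]{LangloisTerpereau}) one concludes that $X(C_z(\D), \F)$ is $\QQ$-Gorenstein with support function $\omega_{Y,z}$.

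The main subtlety is that the single linear correction $-(b_z + 1)\ell$ must simultaneously convert the vertex values $\kappa(p) b_z + \kappa(p) - 1$ into the uniform value $-1$ on \emph{every} ray from a vertex of $\Delta_z$; the fact that the required correction is independent of $p$ is precisely the compatibility already encoded in the linearity of $\vartheta_Y\big|_{C_z(\D)}$ guaranteed by Proposition~\ref{p:theta}, and is what makes the construction work.
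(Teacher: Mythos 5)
Your proof is correct and is essentially the same construction as the paper's: both modify $\vartheta_Y$ on $C_z(\D)$ by the linear term $-(b_z+1)\,\ell$. The only difference is presentational — the paper phrases this modification by choosing a rational function $f \in \CC(C)^\star$ with $\ord_z(f) = -(b_z+1)$ (locally trivializing $K_C + [z]$ near $z$) and taking the support function of the Cartier $\QQ$-divisor $D_{\theta_Y} + \div(f)$, whereas you write the linear formula directly and verify the three conditions by computation, which is slightly more economical since it avoids invoking $f$ at all.
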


\begin{proof}
	As $Y$ is $\QQ$-Gorenstein, by Proposition~\ref{p:theta} there exists on $(\D,\F)$ a colored piecewise linear function $\theta_Y=(\vartheta_Y,(r_\alpha))$  satisfying the conditions (i)-(iii) of the proposition. 
	We construct the function $\omega_{Y,z}$ by modifying $\theta_Y$. To do so we construct a function $f$ in $\CC(C)^*$ such that
		$-\ord_z(f) = b_z +1$,
	for any $(z,p)$ in $\Vert(\D)$, where $K_C = \sum_{y \in C} b_y \cdot [y]$ is a canonical divisor of $C$. 
	Since $C$ is smooth, any divisor on $C$ is locally principal. Hence there exists an open neighborhood $C'$ of $z$ in $C$ and a function $f$ in $\CC(C')^*$ equal to $\CC(C)^*$ such that $(K_C+[z])_{\mid C'} = (\div f^{-1})_{\mid C'}$.
	The divisor $D_{\theta_Y} + \div(f)$ is a Cartier $\QQ$-divisor in $X$. The restriction $\omega_{Y,z}$ of its support function to $C_z(\D)$ is the required function. Indeed, as required, using Proposition~\ref{p:theta} we get that
	\[
		\omega_{Y,z}(\kappa(p)p,\kappa(p)) = \vartheta(z,\kappa(p)p,\kappa(p)) + \kappa(p) \ord_z(f)=-1.
	\]

	Let us now assume the locus of $\D$ is affine. The variety $X(C_z(\D),\F)$ being horospherical, it is $\QQ$-Gorenstein if and only if there exists a (restricted) linear function $\omega : C_z(\D) \to \QQ$ such that $\omega(\tau) = -1$ for any uncolored ray $\tau$ of $C_z(\D)$ (i.e. $\tau \cap \varrho(\F) = \emptyset$) and 
	$\omega(\varrho(D_\alpha),0) = -a_\alpha$ for any color $D_\alpha$ in $\F$
	(see~\cite[Proposition 4.1]{Brion93:SphericalMori}).
	These conditions are satisfied by the function $\omega_{Y,z}$. Indeed, since the locus of $\D$ is affine, for any uncolored ray $\rho$ of $\sigma$ we have
	$\omega_{Y,z}(\rho,0)=\vartheta_Y(z,\rho,0)=-1$.
\end{proof}

In Theorem~\ref{t:stringy-volume} we will need to assume $X$ is log terminal to ensure the convergence of the integral of the stringy motivic volume. We recall the following criterion.

\begin{theorem}[{\cite{LiendoSuss}, \cite[Theorem 2.22]{LangloisTerpereau}}]\label{t:canonical}
	Suppose $X=X(\DF)$ is $\QQ$-Gorenstein. Then $X$ has only log terminal singularities if and only if for any $(\D,\F) \in \DF$, one of the following assertions holds.
	\begin{enumerate}[(i)]
		\item The locus of $\D$ is affine.
		\item The locus of $\D$ is the projective line $\PP^1$ and $\sum_{y \in \PP^1} \left( 1 - \frac{1}{\kappa_y} \right) < 2$, where
		\[
			\kappa_y := \max \left( \{ \kappa(p) \mid p \text{ is a vertex of } \Delta_y \} \right).
		\]
		Here the polyhedron $\Delta_y$ is defined by $\D=\sum_{y \in C_0} \Delta_y \cdot [y]$.
	\end{enumerate}
\end{theorem}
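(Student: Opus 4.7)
The plan is to construct an equivariant resolution of singularities of $X=X(\DF)$ in terms of the combinatorial data $(C,G/H,\DF)$, compute every discrepancy in closed form, and then translate the condition $\discrep(X)>-1$ into the stated dichotomy. Log terminality is a local property on $X$, so it suffices to test each simple chart $X(\D,\F)$ separately. The discoloration morphism $\pi_{dis}:X(\D,\emptyset)\to X(\D,\F)$ of Equation~\eqref{e:discolor} is proper birational, and by Theorem~\ref{t-candiv} combined with condition (iii) of Proposition~\ref{p:theta} the discrepancies along the colors $D_\alpha$ with $D_\alpha\in\F$ are $-a_\alpha-\vartheta_X(C,\varrho(D_\alpha),0)=0$. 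Hence log terminality is preserved under discoloring, and since $X(\D,\emptyset)=G\times^{P}V(\D)$ is a smooth locally trivial fibration over $G/P$, it is equivalent to log terminality of the complexity-one $T$-variety $V(\D)$.

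To resolve $V(\D)$ one refines each Cayley cone $C_y(\D)$ by a regular simplicial subdivision in the toric style (and, when the locus is $\PP^1$, applies an additional blow-up to separate the two boundary fibres). This produces a new divisorial fan $\DF'$ together with a proper birational morphism $X(\DF')\to X(\DF)$ whose exceptional divisors are either of the form $D_\rho$ for a newly introduced ray $\rho$ of $\sigma$, or of the form $D_{(y,p)}$ for a new pair $(y,p)$ where $(p,1)$ spans a new ray of some refined Cayley cone. Combining Theorem~\ref{t:G-div}, Theorem~\ref{t-candiv} and Proposition~\ref{p:theta} one obtains the closed formulas
\[
\nu(D_{(y,p)})=\vartheta_X(y,\kappa(p)p,\kappa(p))-\bigl(\kappa(p)b_y+\kappa(p)-1\bigr),\qquad \nu(D_\rho)=\vartheta_X(C,\rho,0)+1,
\]
so that the log terminality question becomes an inequality on the piecewise linear function $\theta_X$ evaluated at the new generators.

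When the locus of $\D$ is affine, Lemma~\ref{l:gorenstein} shows that $\theta_X$ agrees on each $C_y(\D)$ with a single linear function $\omega_{Y,y}$ realising $-1$ on uncolored rays. The discrepancy formula then reduces verbatim to the classical toric case for the cone $C_y(\D)$, and every $\QQ$-Gorenstein toric variety is log terminal, hence (i) implies log terminality. When the locus is projective, Definition~\ref{d:support-fun}(ii) forces a global function $f\in\CC(C)^{\star}$ into the description of $\theta_X$, so the obstruction involves a Riemann--Roch type contribution from $K_C=\sum_{y}b_y\cdot[y]$. A direct computation of $\nu(D_{(y,p)})$ at the maximally refined vertex over each point $y$, combined with the properness condition $\deg\D\subsetneq\sigma$, shows that $\discrep(X)>-1$ is equivalent to
\[
\sum_{y\in C}\left(1-\frac{1}{\kappa_y}\right)<2-2g(C),
\]
which forces $g(C)=0$ (so the locus is $\PP^1$) and yields the inequality in (ii).

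The main obstacle is this last global computation in the projective case. One must track carefully how the auxiliary function $f$ and the choice of canonical divisor $K_C$ enter the support function $\theta_X$, verify that the infimum of discrepancies is attained asymptotically as the refinement of the Cayley cones becomes arbitrarily fine, and show that the properness condition $\deg\D\subsetneq\sigma$ combined with the $\QQ$-Gorenstein normalisation of $\theta_X$ produces exactly the sharp bound $2-2g(C)$. Once this sharp bound is obtained, both implications of the theorem follow by inspection.
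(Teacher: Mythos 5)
Your overall skeleton -- reduce via parabolic induction and discoloring to a $T$-variety, build a combinatorial resolution by refining Cayley cones, compute discrepancies through the support function $\vartheta_X$, and extract the numeric criterion -- is the right one, and matches the strategy of the cited reference. (Note the paper itself does not reprove this theorem; it cites it, but the ingredients you would need are present in Proposition~\ref{p:relative} and Lemma~\ref{l:neg-omega}.) However, there are genuine errors and a real gap.

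First, your discrepancy formulas have the wrong sign. The coefficient of $D'_{(y,p)}$ in $K_{X'/X}=K_{X'}-\psi^{\star}K_X$ is $\kappa(p)b_y+\kappa(p)-1-\vartheta_X(y,\kappa(p)p,\kappa(p))$, and the coefficient of $D'_{\rho}$ is $-1-\vartheta_X(C,\rho,0)$ (this is Proposition~\ref{p:relative}); you wrote the negatives. Because the direction of the final inequality $\sum_y(1-1/\kappa_y)<2$ comes out of exactly this sign, this is not cosmetic. Second, the colors $D_\alpha\in\F$ are not the exceptional divisors of the discoloration $\pi_{dis}$: the exceptional divisors are the $G$-stable $D_\rho$ with $\rho\in\Ray(\DFd)\setminus\Ray(\DF)$, i.e.\ the rays of the tail cone that meet $\varrho(\F)$, and their discrepancy is $-1-\vartheta_X(C,\rho,0)$, positive because $\vartheta_X(C,\varrho(D_\alpha),0)=-a_\alpha<0$ by Proposition~\ref{p:theta}(iii). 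Third, your remark that one must ``verify that the infimum of discrepancies is attained asymptotically as the refinement of the Cayley cones becomes arbitrarily fine'' is a misconception: once one has a single SNC log resolution, $\discrep(X)>-1$ holds if and only if every exceptional divisor of that resolution has discrepancy $>-1$ (\cite{KollarMori:IntroMMP}); further refinement only lowers discrepancies. For the converse direction one must exhibit one specific divisor with discrepancy $\leq -1$ when the bound fails, not pass to a limit. Finally, and most importantly, the core computation -- producing the bound $\sum_y(1-1/\kappa_y)<2-2g(C)$ from the discrepancy formulas, and showing it is sharp -- is exactly the content of case (ii), and you leave it as ``the main obstacle.'' The computation you are missing is in substance the one that appears in the proof of Lemma~\ref{l:neg-omega} in the paper: for a vertex $v$ of $\deg\D$ one obtains $\omega_X(C,v,0)=\tfrac{1}{\alpha}\left(\deg K_{\PP^1}+\sum_z\left(1-\tfrac{1}{\kappa(v_z)}\right)\right)$, and the sign of this quantity, compared to the discrepancy formula of Proposition~\ref{p:relative}, is what yields the stated equivalence. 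As written, the proposal outlines the correct strategy but does not constitute a proof.
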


\subsection{Desingularization}\label{s:desingularization}

Here we explain how to desingularize $X$ in terms of combinatorial data. The desingularization involves three proper birational morphisms:

\begin{equation}\label{e:desingularization}
	\begin{tikzpicture}[description/.style={fill=white,inner sep=2pt},baseline=(current  bounding  box.center)]
			\matrix (m) [matrix of math nodes, row sep=3em,column sep=2.5em, text height=1.5ex, text depth=0.25ex]
		{ 	X(\DF') & X(\tilde \DF) & X(\DFd) & X(\DF)=X. \\
		};
		\path[-]
			(m-1-1) edge[->] node[auto] {$q'$} (m-1-2)
			(m-1-2) edge[->] node[auto] {$q$} (m-1-3)
			(m-1-3) edge[->] node[auto] {$\pi_{dis}$} (m-1-4)
			(m-1-1) edge[->,bend right=10,looseness=1.4] node[below] {$\psi$} (m-1-4);
	\end{tikzpicture}
\end{equation}
The discoloration morphism $\pi_{dis}$ and the affinization morphism $q$ have already been defined in Equations~\eqref{e:discolor} and~\eqref{e:Etilde}. 
Let us now construct the morphism $q'$. Let $\Sp (\tilde \DF)$ be the set of special points of $\tilde \DF$ and denote its elements by $y_1,\dots,y_r$. Up to refining the affine coverings $(C_j^i)_{j \in J_i}$ in the construction outlined in Notation~\ref{n:affine-fan}, we may assume that each element of $\tilde \DF$ contains at most one special point. For $1 \leq i \leq r$, we let $\tilde \DF_i$ be the set of all polyhedral divisors $\D$ in $\DF$ which have $y_i$ as a special point and we define a fan $\Sigma_i$ on $N_\QQ \oplus \QQ$ as the fan generated by the Cayley cones $C_{y_i}(\D)$ for any $\D$ in $\tilde \DF$. Clearly the fans $\Sigma_i$ all contain the tail fan $\Sigma(\tilde \DF)$ as a subfan. Now following \cite[Section 11.1]{Cox:ToricVarieties} we will consider a star subdivision of $\Sigma(\tilde \DF)$ and compatible star subdivisions of the $\Sigma_{i}$.

\begin{definition}
	Consider a fan $\Sigma$ in $N_\QQ$ and an element $\nu$ of $|\Sigma| \cap N$ in the support of $\Sigma$. The \emph{star subdivision} of $\Sigma$ associated with $\nu$ is the fan consisting of the cones
	\begin{enumerate}[(i)]
		\item $\sigma \in \Sigma$ such that $\nu \not \in \sigma$,
		\item $\Cone(\tau,\nu)$ for $\tau \in \Sigma$ such that $\nu \not \in \tau$ and there exists $\sigma \in \Sigma$ with $\{\nu\} \cup \tau \subset \sigma$.
	\end{enumerate}
	A star subdivision is a subdivision of the original fan $\Sigma$ (see~\cite[Lemma 11.1.3]{Cox:ToricVarieties}). 
\end{definition}
 We say a fan $\Sigma$ is \emph{smooth} if any cone in $\Sigma$ is generated by a subset of a lattice basis of $N$. This is equivalent to the toric variety $X_\Sigma$ being smooth. By \cite[Theorem 11.1.9]{Cox:ToricVarieties} any fan $\Sigma$ has a smooth refinement $\Sigma'$ containing every smooth cone of $\Sigma$ and obtained from $\Sigma$ by a sequence of star subdivisions. Geometrically this implies that there is a projective desingularization of toric varieties $X_{\Sigma'} \to X_\Sigma$.\\

Let $\Sigma'(\tilde \DF)$ be a smooth refinement of the tail fan $\Sigma(\tilde \DF)$, corresponding to a sequence of star subdivisions associated with an element $(\nu_1,\dots,\nu_s) \in N^s$. For any $1 \leq i \leq r$ define $\hat \Sigma_i$ as the fan obtained from $\Sigma$ after applying star subdivisions with respect to $((\nu_1,0),\dots,(\nu_s,0))$ in $(N \oplus \ZZ)^s$. Clearly $\Sigma'(\tilde \DF)$ is a smooth subfan of $\hat \Sigma_i$. Finally, there exists a refinement $\Sigma_i'$ of $\hat \Sigma_i$ such that $\Sigma_i'$ contains every smooth cone of $\hat \Sigma_i$, including in particular every cone of $\Sigma'(\tilde \DF)$.

For any $\D$ in $\tilde \DF_i$ consider the set $\Sigma_{i,\D}$ of cones $\tau$ in $\Sigma_i'$ such that $\tau \subset C_{y_i}(\D)$. For $\tau\in \Sigma_{i,\D}$ we denote by $\sigma_{\tau,i}$ the cone $\tau \cap (N_\QQ \times \{ 0 \})$, and by $\Delta_{\tau,i}$ the set $\tau \cap (N_\QQ \times \{ 1 \})$ when $\tau \not \subset N_\QQ \times \{ 0 \}$. Since $\tau \subset C_{y_i}(\D)$ it follows that $\tau \cap (N_\QQ \times \{ 1 \})$ is a $\sigma_{\tau,i}$-polyhedron. We define a $\sigma_{\tau,i}$-polyhedral divisor 
\[
	\D_{\tau,i} := \sum_{y \in C_0} \Delta_y \cdot [y]
\]
 where $C_0 \subset C$ is the locus of $\D$ and $\Delta_y = \sigma_{\tau,i}$ for any $y\in C_0 \setminus \{ y_i \}$. In addition, 
$\Delta_{y_i} = \sigma_{\tau,i}$ if $\tau \subset N_\QQ \times \{0\}$ and 
 $\Delta_{y_i} = \Delta_{\tau,i}$ otherwise.
The set $\tilde \DF_\D := \{ \D_{\tau,i} \mid \tau \in \Sigma_{i,\D} \}$ is a divisorial fan. Moreover, the $\CC$-algebras embeddings $A(C_0,\D) \subset A(C_0,\D_{\tau,i})$ for $\D_{\tau,i} \in \tilde \DF_D$ induce a proper $T$-equivariant birational morphism $V(\tilde \DF_\D) \to V(\D)$, see \cite[Theorem 12.13]{TimashevBook}. Since the set $\DF' := \bigcup_{\D \in \tilde \DF} \tilde \DF_\D$ is a divisorial fan, the previous morphisms glue together into a proper birational morphism $q' : X(\DF') \to X(\tilde \DF)$. To prove that $X(\DF')$ is indeed smooth we use \cite[Chap. II]{KKMS73}, \cite[Proof of 2.6]{LiendoSuss}, and \cite[Theorem 2.5]{LangloisTerpereau}.

This concludes our construction of the desingularization of $X$.

\subsection{Discrepancy}\label{s:discrepancy}

The aim of this section is to compute the discrepancy of the $G$-equivariant morphism $\psi : X' \to X$ from Equation~\eqref{e:desingularization}, whose definition we recall below. 
\begin{center}
	\begin{tikzpicture}[description/.style={fill=white,inner sep=2pt},baseline=(current  bounding  box.center)]
			\matrix (m) [matrix of math nodes, row sep=3em,column sep=2.5em, text height=1.5ex, text depth=0.25ex]
		{ 	X':=X(\DF') & X(\tilde \DF) & X(\DFd) & X. \\
		};
		\path[-]
			(m-1-1) edge[->] node[auto] {$q'$} (m-1-2)
			(m-1-2) edge[->] node[auto] {$q$} (m-1-3)
			(m-1-3) edge[->] node[auto] {$\pi_{dis}$} (m-1-4)
			(m-1-1) edge[->,bend right=10,looseness=1.4] node[below] {$\psi$} (m-1-4);
	\end{tikzpicture}
\end{center}
The exceptional divisors of $\psi$ are particular $G$-divisors of $X'$, that is, they correspond to some elements of $\Vert(\DF') \sqcup \Ray(\DF')$. 
\\
\paragraph{{\bf Exceptional divisors of $\pi_{dis}$.}} 
As $\Vert(\DFd)$ is equal to $\Vert(\DF)$ the exceptional divisors of $\pi_{dis}$ are in one-to-one correspondence with the elements of $\Ray(\DFd)$ which do not come from $\Ray(\DF)$, i.e., the elements of 
\begin{equation} \label{ray}
	\bigcup _{i \in J} \Ray(\D^i,\emptyset) \setminus \Ray(\D^i,\F^i).
\end{equation}
By the lemma, the associated divisors $D_{\rho_1},\dots,D_{\rho_t} \subset X(\DFd)$ in (\ref{ray}) are the exceptional divisors of $\pi_{dis}$.
\\
\paragraph{{\bf Exceptional divisors of $q$.}}
The morphism $q : X(\tilde \DF) \to X(\DFd)$ is obtained via parabolic induction on the $T$-equivariant morphism $V(\tilde \DF) \to V(\DF)$ from Equation~\eqref{e:Etilde}, so the exceptional divisors of $q$ are the parabolic inductions of the exceptional divisors of the latter map. By \cite[Theorem 10.1]{AltmannHausen}, the set $\Ray(\tilde \DF) \setminus \Ray(\DFd)$ is equal to the reunion over all colored polyhedral divisors $(\D,\emptyset)$ in $\tilde \DF$ with projective locus, of the rays $\rho$ of the tails of polyhedral divisors $\D$ with the property that $\deg \D \cap \rho \neq \emptyset$. We denote the associated exceptional divisors of $q$ by $D_{\rho_{t+1}},\dots,D_{\rho_s} \subset X(\tilde \DF)$.
\\
\paragraph{{\bf Exceptional divisors of $q'$.}}
As before the map $q' : X'=X(\DF') \to X(\tilde \DF)$ is obtained from the map $q'' : V(\DF') \to V(\tilde \DF)$. Denote by $\{ y_1,\dots,y_u \}$ the set of special points of $\tilde \DF$. For $1 \leq i \leq u$ consider the fans $\Sigma_i$ and $\Sigma_i'$ constructed in Section~\ref{s:desingularization}. The exceptional divisors of $q''$ (or equivalently of $q'$) are in bijection with the triples $[y_i,\nu,\ell]$, where $(\nu,\ell)$ is a ray of $\Sigma_i' \setminus \Sigma_i$. When $\ell = 0$ the triple $(y_i,\nu,0)$ corresponds to an element of $\Ray(\DF')$, otherwise it corresponds to an element of $\Vert(\DF')$. We denote the associated exceptional divisors by $D_{\rho_{s+1}}',\dots,D_{\rho_e}'$ and $D_{(y_1,p_1)}',\dots,D_{(y_u,p_u)}'$, respectively.
\\
\paragraph{{\bf Relative canonical class of $\psi$.}}
Since $\Sigma(\tilde \DF) = \Sigma (\DFd) = \Sigma (\DF)$ and $\Sigma(\DF')$ is a fan subdivision of $\Sigma(\tilde \DF)$, we may identify the pullbacks of $D_{\rho_1},\dots,D_{\rho_s}$ with the corresponding divisors $D_{\rho_1}',\dots,D_{\rho_s}'$ in $X'$. 
By the previous paragraphs, the exceptional divisors of $\psi$ are the $(D_{\rho_i}')_{1 \leq i \leq e}$ and the $(D_{(y_j,p_j)}')_{1 \leq j \leq u}$.

\begin{proposition}\label{p:relative}
	If $X(\DF)$ is $\QQ$-Gorenstein, then the relative canonical class of $\psi$
	is represented by
        \begin{align*}
		K_{X' / X} &= \sum_{j=1}^u \left( \kappa(p_j)b_{y_j}+\kappa(p_j)-1-\vartheta_X (y,\kappa(p_j)p_j,\kappa(p_j)) \right)  D_{(y_j,p_j)}' 
		&+ \sum_{i=1}^e \left(-1-\vartheta_X (C,\rho_i,0) \right)  D_{\rho_i}' ,
	\end{align*}
	where $\sum_{y \in C} b_y \cdot [y]$ is a canonical divisor of $C$, $\theta_X = (\vartheta_X,(r_\alpha))$ is the support function from Section~\ref{s:support}, and $\kappa : N_\QQ \to \ZZ$ is defined by 	
		$\kappa(p) = \min \{ \lambda \in \ZZ_{>0} \mid \lambda \cdot p \in N \}$.
\end{proposition}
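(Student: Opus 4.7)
The strategy is to express both $K_{X'}$ and $\psi^{\star}K_X$ as explicit $\QQ$-Weil divisors on $X'$ in terms of the combinatorial data attached to $\DF'$, and then subtract. Since $\psi$ is $G$-equivariant and birational, $K_{X'/X}$ is supported on the $\psi$-exceptional locus, which has already been identified with the divisors $D'_{\rho_i}$ for $1\le i\le e$ and $D'_{(y_j,p_j)}$ for $1\le j\le u$.

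First, applying Theorem~\ref{t-candiv} to the smooth variety $X'=X(\DF')$ gives
\[
K_{X'}=\sum_{(y,p)\in\Vert(\DF')}(\kappa(p)b_y+\kappa(p)-1)\,D'_{(y,p)}-\sum_{\rho\in\Ray(\DF')}D'_\rho-\sum_{\alpha\in\Phi\setminus I}a_\alpha D_\alpha.
\]
Next, because $X$ is $\QQ$-Gorenstein, $K_X$ is represented by the Cartier $\QQ$-divisor $D_{\theta_X}$, where $\theta_X=(\vartheta_X,(r_\alpha))$ is the colored piecewise linear function of Proposition~\ref{p:theta} with $r_\alpha=-a_\alpha$. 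The key observation is that $\DF'$ is a refinement of $\DF$ in the colored sense: the support $|\DF'|$ equals $|\DF|$, the coloration $\F_{\DF'}$ is empty, and every Cayley cone of $\DF$ is subdivided into Cayley cones of $\DF'$. Hence $\vartheta_X$ remains piecewise linear on the cones of $\DF'$ and defines an element of $\PL(\DF',\QQ)$. Since the correspondence $\theta\mapsto D_\theta$ of Definition~\ref{d:support-fun} commutes with pullback under the $G$-equivariant birational morphism $\psi$, one obtains
\[
\psi^{\star}K_X=\sum_{(y,p)\in\Vert(\DF')}\vartheta_X(y,\kappa(p)p,\kappa(p))\,D'_{(y,p)}+\sum_{\rho\in\Ray(\DF')}\vartheta_X(C,\rho,0)\,D'_\rho-\sum_{\alpha\in\Phi\setminus I}a_\alpha D_\alpha.
\]

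Subtracting, the color contributions cancel identically. For a vertex $(y,p)\in\Vert(\DF)\subset\Vert(\DF')$, condition (i) of Proposition~\ref{p:theta} yields $\vartheta_X(y,\kappa(p)p,\kappa(p))=\kappa(p)b_y+\kappa(p)-1$, so the coefficient at $D'_{(y,p)}$ in $K_{X'/X}$ vanishes; similarly, for $\rho\in\Ray(\DF)\subset\Ray(\DF')$, condition (ii) gives $\vartheta_X(C,\rho,0)=-1$ and the coefficient at $D'_\rho$ vanishes. The remaining divisors are precisely the new ones: $\Vert(\DF')\setminus\Vert(\DF)=\{(y_j,p_j)\}_{1\le j\le u}$ introduced by the star subdivisions in the construction of $q'$, and $\Ray(\DF')\setminus\Ray(\DF)=\{\rho_i\}_{1\le i\le e}$ gathering the exceptional rays of $\pi_{dis}$, $q$, and $q'$ as catalogued in the paragraphs preceding the proposition. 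Reading off their coefficients produces the announced formula.

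The main obstacle is justifying the pullback step $\psi^{\star}D_{\theta_X}=D_{\theta_X}\vert_{\DF'}$ through the three-stage factorization $\psi=\pi_{dis}\circ q\circ q'$, as each stage alters the combinatorial data in its own way: $\pi_{dis}$ turns colored rays into uncolored $G$-stable rays, $q$ introduces rays meeting $\deg\D$, and $q'$ performs star subdivisions within each Cayley cone. One may handle this either stage by stage, reducing locally to the toric case of compatibility between support functions and refinements, or globally, by observing that $D_{\theta_X}$ is a Cartier $\QQ$-divisor on $X$ with local $B$-semi-invariant equations $f\otimes\chi^m$; its pullback to $X'$ has the same local equations, so its coefficients along $B$-stable divisors are still dictated by the same function $\vartheta_X$ read on $\DF'$, which is what the formula above encodes.
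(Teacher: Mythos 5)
Your proof is correct and follows essentially the same route as the paper: write $K_{X'}$ and $\psi^\star K_X$ as $B$-stable $\QQ$-Weil divisors on $X'$ in terms of the data of $\DF'$, subtract, and observe via Proposition~\ref{p:theta} that the contributions from $\Vert(\DF)\subset\Vert(\DF')$, $\Ray(\DF)\subset\Ray(\DF')$ and the colors all cancel, leaving only the $\psi$-exceptional terms. The only cosmetic difference is that you quote Theorem~\ref{t-candiv} directly for $K_{X'}$, whereas the paper writes $K_{X'}$ via the support function $\vartheta_{X'}$ and evaluates it at the end using Proposition~\ref{p:theta}; these are the same thing since $X'$ is smooth.
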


\begin{proof}
	A canonical divisor of $X'$ is given by 
	\[
		K_{X'} = \sum_{(y,p) \in \Vert(\DF')} \vartheta_{X'}(y,\kappa(p)p,\kappa(p))  D_{(y,p)}' + \sum_{\rho \in \Ray(\DF')}  \vartheta_{X'}(C,\rho,0) D_\rho' - \sum_{\alpha \in \Phi \setminus I} a_\alpha  D_\alpha',
	\]
	while the pullback by $\psi$ of a canonical divisor of $X$ is
	\[
		\psi^* K_{X} = \sum_{(y,p) \in \Vert(\DF')} \vartheta_X(y,\kappa(p)p,\kappa(p))  D_{(y,p)}' + \sum_{\rho \in \Ray(\DF')}  \vartheta_X(C,\rho,0)  D_\rho' - \sum_{\alpha \in \Phi \setminus I} a_\alpha  D_\alpha',
	\]
	where we choose $K_X$ as in Theorem~\ref{t:canonical}. Hence
	\[
		K_{X' / X} = \sum_{j=1}^u (\vartheta_{X'}-\vartheta_X) (y,\kappa(p_j)p_j,\kappa(p_j))  D_{(y_j,p_j)}' + \sum_{i=1}^e (\vartheta_{X'}-\vartheta_X) (C,\rho_i,0)  D_{\rho_i}'.
	\]
	We conclude by computing the values of $\vartheta_{X'}$ using Proposition~\ref{p:theta}.
\end{proof} 

\subsection{Stringy invariants}\label{s:stringy}
This section is devoted to determine the stringy motivic volume of a log terminal
horospherical variety of complexity one by using the discrepancy calculation from Section~\ref{s:discrepancy}. We use the notations of the previous section, and  denote by $\Gamma \subset C$ an open dense affine subset which does not contain any special point. Let $d$ be the dimension of $X$.
\\
\paragraph{{\bf Decomposition of the motivic integral.}}
We decompose the motivic volume $\E(X)$ along the fibers of the map $\val : X_\Gamma' \to |\DF'|_\Gamma \cap \N$ from Equation~\eqref{e:val}.
The next statement immediately follows from the measurability results of 
 Lemmas~\ref{l:vol-hor} and~\ref{l:vol-ver}.

\begin{lemma}\label{l:E-decomposition}
	With above notations, the stringy motivic volume can be decomposed as 
	\[
		\E(X) = \sum_{(y,\nu,\ell) \in |\DF'|_\Gamma \cap \N } \int_{\val^{-1}(y,\nu,\ell)} \LL^{-\ord_{K_{X'/X}}} d\mm_{X'}.
	\]
\end{lemma}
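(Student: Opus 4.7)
The plan is to use the integral representation of the stringy motivic volume from Theorem~\ref{t:def-stringy}, which gives
\[
    \E(X) = \int_{\L(X')} \LL^{-\ord_{K_{X'/X}}}\, d\mm_{X'}
\]
and the convergence of the integral in $\Mc(\LL^{1/m})$ is guaranteed by the log terminal hypothesis on $X$ together with Theorem~\ref{t:canonical}. The desingularization $\psi : X' \to X$ constructed in Section~\ref{s:desingularization} restricts to the identity on the open subset $\Gamma \times G/H$, so the two arc subspaces agree outside a proper closed subscheme of $X'$. By Remark~\ref{r:measure-closed}, this complement has motivic measure zero. Hence the first step is to replace $\L(X')$ by $\L_\Gamma(X')$ without changing the integral.

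Next, I would combine Theorem~\ref{t:arc-horo} (applied to $\DF'$ and the smooth variety $X'$) with the observation that $|\DF'|_\Gamma \cap \N$ is a countable set, to obtain a set-theoretic partition
\[
    \L_\Gamma(X')(K) \;=\; \bigsqcup_{(y,\nu,\ell)\in |\DF'|_\Gamma \cap \N} \val^{-1}(y,\nu,\ell),
\]
functorial in the field extension $K$. Lemmas~\ref{l:vol-hor} and~\ref{l:vol-ver}, applied fibrewise over $G/P$ via Lemma~\ref{l:surj-arc} (as in the proof of Theorem~\ref{t:volume}), show that each piece $\val^{-1}(y,\nu,\ell)$ is a cylinder in $\L(X')$, hence measurable. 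Moreover, because the divisors appearing in $K_{X'/X}$ (Proposition~\ref{p:relative}) are $G$-invariant, the function $\ord_{K_{X'/X}}$ is constant on each $G(\O)$-orbit, and in fact equals a linear expression in $(y,\nu,\ell)$ coming from the support function $\vartheta_X$. This simultaneously yields measurability of the level sets of $\ord_{K_{X'/X}}$ restricted to each fiber and shows that each summand in the right-hand side is a well-defined element of $\Mc(\LL^{1/m})$.

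The main obstacle is to justify exchanging the motivic integral with the countable sum over $|\DF'|_\Gamma \cap \N$. For this I would argue as follows. Using the cylinder descriptions from Lemmas~\ref{l:vol-hor}~and~\ref{l:vol-ver}, the motivic measure of $\val^{-1}(y,\nu,\ell)$ has dimension tending to $-\infty$ as $(y,\nu,\ell)$ exits any finite subset of $|\DF'|_\Gamma \cap \N$, because the integers $\sum_j \langle u_j,\nu\rangle$ and $\sum_j \langle w_j,(\nu,\ell)\rangle$ controlling the codimension of their $q$-bases grow without bound. Combining this bound with the log terminal inequality $\discrep(X) > -1$ of Theorem~\ref{t:canonical}, applied orbit-by-orbit via the formula of Proposition~\ref{p:relative}, one shows that the dimension in $\Mc(\LL^{1/m})$ of the tail
\[
    \sum_{(y,\nu,\ell) \notin F} \int_{\val^{-1}(y,\nu,\ell)} \LL^{-\ord_{K_{X'/X}}}\, d\mm_{X'}
\]
tends to $-\infty$ as the finite set $F \subset |\DF'|_\Gamma \cap \N$ exhausts the countable index set. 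This is the standard convergence criterion in $\Mc$ used in the construction of motivic integrals recalled in Section~\ref{s:motint}. With convergence in hand, countable additivity of the motivic measure (Definition~\ref{d:measurable}) gives the desired equality, finishing the proof.
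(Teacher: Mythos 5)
Your proof takes essentially the same route as the paper: pass from $\L(X')$ to $\L_\Gamma(X')$ using the measure-zero complement, partition by $\val$, and establish measurability of the fibers via Lemmas~\ref{l:vol-hor}, \ref{l:vol-ver}, and \ref{l:surj-arc}. The one divergence is in how the interchange of the countable sum with the motivic integral is justified: the paper simply invokes \cite[Proposition 1.3 (i)]{Batyrev-Moreau}, whereas you sketch a dimension-decay argument from scratch; your sketch is plausible but implicitly relies on the explicit values of $\ord_{K_{X'/X}}$ on the $\val$-fibers (via the support function and Proposition~\ref{p:relative}) which the paper only establishes in the subsequent lemmas, so as written it anticipates later computations rather than closing the argument self-containedly.
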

\paragraph{{\bf The stringy support function.}}
Now we want to calculate the right-hand side of the identity in Lemma~\ref{l:E-decomposition}, depending on whether $\ell=0$ or $\ell \geq 1$. As we did in Lemma~\ref{l:gorenstein} for a single hypercone, we need to introduce a new support function $\omega_X$ inspired from the definition of the function $\theta_X$.

\begin{propdef}\label{pd:support}
	Denote by $(\D^i,\F^i)$ the elements of $\DF$ for $i \in J$. There exists a pair $\varpi_X=(\omega_X,(r_{\alpha}))$, where $\omega_X : |\DF| \to \QQ$ is a function and $r_\alpha \in \ZZ$ for any $\alpha \in \Phi \setminus I$, such that
	\begin{itemize}
		\item for any $i \in J$ and any $y$ in the locus of $\D^i$, we have
			\begin{enumerate}[(i)]
				\item $\omega_X(y,\rho,0)=\vartheta_X(y,\rho,0)$ for any uncolored ray $\rho$ of the tail of $\D^i$,
				\item $\omega_X(y,\tau)=-1$ for any ray $\tau$ of $C_y(\D^i)$ not contained in $N_\QQ$,
				\item $\omega_X(y,\varrho(D_\alpha),0)=-a_\alpha$ for any color $D_\alpha \in \F^i$,
				\item $r_\alpha=-a_\alpha$ for any $\alpha \in \Phi \setminus I$ such that $D_\alpha \not \in \F_\DF$, 
				\item $\omega_X(y,\nu,\ell) = \langle m_y^i,\nu \rangle + \ell c_y^i$ for some $m_y^i \in M$ and $c_y^i \in \QQ$.
			\end{enumerate}
		\item If $(y,\nu,\ell) \in C(\D^i) \cap C(\D^j)$, then 
			$\langle m_y^i,\nu \rangle + \ell c_y^i = \langle m_y^j,\nu \rangle + \ell c_y^j.$
	\end{itemize}
	We call the pair $\varpi_X=(\omega_X,(r_\alpha))$ the \emph{stringy support function}.
\end{propdef}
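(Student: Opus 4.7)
The plan is to produce $\omega_X$ as an explicit modification of the support function $\vartheta_X$ already furnished by Proposition~\ref{p:theta}, and thereby reduce every required property of $\varpi_X$ to a corresponding property of $\theta_X$.

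First, since $X=X(\DF)$ is $\QQ$-Gorenstein, invoke Proposition~\ref{p:theta} to fix a canonical divisor $K_C=\sum_{y\in C}b_y\cdot[y]$ on $C$ and the associated colored piecewise linear function $\theta_X=(\vartheta_X,(-a_\alpha))$ on $\DF$; in particular $\vartheta_X(y,\kappa(p)p,\kappa(p))=\kappa(p)b_y+\kappa(p)-1$ for every $(y,p)\in\Vert(\DF)$, $\vartheta_X(C,\rho,0)=-1$ for every uncolored ray $\rho$, and $\vartheta_X(C,\varrho(D_\alpha),0)=-a_\alpha$ for every color $D_\alpha\in\F_\DF$.

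Next, I would define
\[
\omega_X(y,\nu,\ell) := \vartheta_X(y,\nu,\ell) - \ell\,(b_y+1), \qquad (y,\nu,\ell) \in |\DF|,
\]
and set $r_\alpha := -a_\alpha$ for every $\alpha\in\Phi\setminus I$. The expression is unambiguous on $\N_\QQ$: when $\ell=0$ the correction term vanishes and $\vartheta_X(y,\nu,0)$ depends only on $\nu$, so $\omega_X(y,\nu,0)=\vartheta_X(C,\nu,0)$, consistent with the equivalence relation $(y,\nu,0)\sim(C,\nu,0)$ of Definition~\ref{d:hyperspace}; and when $\ell>0$ the point $y$ is uniquely determined by $(y,\nu,\ell)$.

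I would then verify the listed conditions one by one. Fixing $i\in J$ and $y$ in the locus of $\D^i$, linearity of $\vartheta_X$ on the Cayley cone $C_y(\D^i)$ (Definition~\ref{d:support-fun}(i)) yields $\vartheta_X(y,\nu,\ell)=\langle m_y^i,\nu\rangle+\ell c_y^i$ for some $m_y^i\in M$, $c_y^i\in\QQ$, so
\[
\omega_X(y,\nu,\ell) = \langle m_y^i,\nu\rangle + \ell\,(c_y^i - b_y - 1),
\]
which is linear on $C_y(\D^i)$, giving (v). For (i), the case $\ell=0$ gives $\omega_X(y,\rho,0)=\vartheta_X(y,\rho,0)$ directly. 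For (ii), any ray $\tau$ of $C_y(\D^i)$ not contained in $N_\QQ$ has primitive generator of the form $(\kappa(p)p,\kappa(p))$ for some vertex $p$ of $\Delta_y^i$, and the explicit value of $\vartheta_X$ above gives $\omega_X(y,\kappa(p)p,\kappa(p)) = \kappa(p)b_y+\kappa(p)-1-\kappa(p)(b_y+1) = -1$. Conditions (iii) and (iv) are immediate from the corresponding equalities for $\vartheta_X$, together with the definition of $r_\alpha$.

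The compatibility on intersections $C(\D^i)\cap C(\D^j)$ is where one might expect the difficulty, but it requires no work: because $\vartheta_X$ is, by construction from Proposition~\ref{p:theta}, a single well-defined function on $|\DF|$, the two local expressions $\langle m_y^i,\nu\rangle+\ell c_y^i$ and $\langle m_y^j,\nu\rangle+\ell c_y^j$ already agree on $C(\D^i)\cap C(\D^j)$; subtracting the common correction term $\ell(b_y+1)$, which depends only on the triple $(y,\nu,\ell)$ and not on the chart, preserves the equality. This settles the last bullet of the statement. The genuine conceptual point of the proposition — which is worth emphasizing in a remark after the proof — is that unlike $\theta_X$ the pair $\varpi_X$ is generally not a colored piecewise linear function in the sense of Definition~\ref{d:support-fun}, because the local "slope in the $\ell$-direction" $c_y^i-b_y-1$ is not globally realized by a rational function on $C$ (the divisor $K_C$ being only $\QQ$-Cartier, not $\QQ$-principal in general). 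This is precisely the reason it had to be introduced as an auxiliary object rather than as a genuine support function.
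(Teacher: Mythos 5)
Your proposal is correct, and the underlying construction is the same as the paper's: both modify $\vartheta_X$ by the correction $-\ell(b_y+1)$, which is exactly what the auxiliary $f$ with $-\ord_z(f)=b_z+1$ achieves in the proof of Lemma~\ref{l:gorenstein}. The paper's own argument invokes Lemma~\ref{l:gorenstein} to produce, for each $i$ and each $y$, a \emph{local} linear function $\omega_{i,y}$ on $C_y(\D^i)$, and then must check that $\omega_{i,y}=\omega_{j,y}$ on $C_y(\D^i)\cap C_y(\D^j)$, which it does by observing that the common face is generated by colored and uncolored rays and by vertices, on which (i)--(iii) pin down the values. Your explicit global formula $\omega_X=\vartheta_X-\ell(b_y+1)$ bypasses this gluing step entirely: since $\vartheta_X$ is already a single function on $|\DF|$ and the correction depends only on $(y,\nu,\ell)$ and not on the chart index $i$, compatibility on overlaps is manifest, and you in effect re-derive Lemma~\ref{l:gorenstein} as a by-product. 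This is a genuine streamlining of the argument. All five verifications you give are accurate (in particular, for a non-special $y$ the ray $(0,1)$ satisfies $\vartheta_X(y,0,1)=b_y$, so the formula correctly returns $-1$ there too), and your closing remark correctly identifies why $\varpi_X$ fails to be a colored piecewise linear function in general.
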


\begin{proof}
	Since $X$ is $\QQ$-Gorenstein, by Lemma~\ref{l:gorenstein}, for any $i \in J$ and $y$ in the locus of $\D^i$, there exists a linear function $\omega_{i,y}$ on $C_y(\D^i)$ satisfying Conditions (i)-(iii) above. It is enough to check that $\omega_{i,y}$ and $\omega_{j,y}$ coincide on $C_y(\D^i) \cap C_y(\D^j)$. 
	Indeed $C_y(\D^i) \cap C_y(\D^j)$ is a common face to both $C_y(\D^i)$ and $C_y(\D^j)$. It generated by colored or uncolored rays. Moreover both $\omega_{i,y}$ and $\omega_{j,y}$ are linear on the common face and coincide on the rays, which concludes the proof.
\end{proof}

\paragraph{{\bf Motivic volume for horizontal arcs.}} 
We compute the motivic integral over subsets of the form $\val^{-1}(C,\nu,0)$.

\begin{lemma}\label{l:integral-hor}
	Let $(y,\nu,0)$ be an element of $|\DF'|_\Gamma=|\DF|_\Gamma$. We have
	\[
		\int_{\val^{-1}(C,\nu,0)} \LL^{-\ord_{K_{X'/X}}} d\mm_{X'} = [G/H] [\Gamma] \LL^{\vartheta_X(C,\nu,0)}.
	\]
\end{lemma}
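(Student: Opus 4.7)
The plan is to reduce this motivic integral to the product of an explicit constant $\LL$-power with the motivic volume of the fiber, which is already computed in Theorem~\ref{t:volume}. Applied to a horizontal point ($\ell=0$), that theorem gives
\[
\mm_{X'}\bigl(\val^{-1}(C,\nu,0)\bigr) = [\Gamma]\,[G/H]\,\LL^{-\sum_{j=1}^{r}\langle u_j,\nu\rangle},
\]
where $\sigma$ is the smooth cone of $\Sigma(\DF')$ containing $\nu$ in its relative interior, $r=\dim\sigma$, and $u_1,\ldots,u_r$ form the integral basis of $\sigma^\vee \cap W_\QQ$ dual to a choice of primitive ray generators $\rho_1,\ldots,\rho_r$ of $\sigma$.

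Next I would argue that $\ord_{K_{X'/X}}$ is constant on $\val^{-1}(C,\nu,0)$, so that the integral factors as this constant times the volume above. The divisors appearing in the expression of $K_{X'/X}$ given by Proposition~\ref{p:relative} are all $G$-stable, and every arc in the fiber shares the same $G$-invariant valuation $(C,\nu,0)$, so the function is indeed constant. To evaluate it, the first observation is that the vertical exceptional contributions $D'_{(y_j,p_j)}$ drop out, since the condition $\ell_\alpha=0$ forces the corresponding orders to vanish. For the horizontal exceptional divisors, the smoothness of $X'$ along the orbit associated to $\sigma$, together with the toric local picture coming from parabolic induction on $V(\DF')$, yields the standard identity $\ord_{D'_{\rho_k}}(\alpha) = a_k = \langle u_k,\nu\rangle$ when $\rho_k$ is a ray of $\sigma$, and $\ord_{D'_\rho}(\alpha) = 0$ for all other rays of $\Sigma(\DF')$.

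The key step is then to repackage the resulting sum via the support function $\vartheta_X$. Any ray $\rho_k$ of $\sigma$ that is \emph{not} exceptional for $\psi$ already lies in $\Ray(\DF)$, and hence satisfies $\vartheta_X(C,\rho_k,0)=-1$ by Proposition~\ref{p:theta}(ii); its would-be contribution $a_k(-1-\vartheta_X(C,\rho_k,0))$ vanishes, so the sum from Proposition~\ref{p:relative} may be harmlessly extended to range over all rays of $\sigma$. Using the linearity of $\vartheta_X$ on the cone $\sigma$, I then obtain
\[
\ord_{K_{X'/X}}(\alpha) = \sum_{k=1}^{r} a_k\bigl(-1 - \vartheta_X(C,\rho_k,0)\bigr) = -\sum_{k=1}^{r}\langle u_k,\nu\rangle - \vartheta_X(C,\nu,0).
\]
Multiplying $\LL^{-\ord_{K_{X'/X}}(\alpha)}$ by the fiber volume cancels the $\sum_k \langle u_k,\nu\rangle$ contribution and produces the claimed identity. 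The hard part is the step $\ord_{D'_{\rho_k}}(\alpha)=a_k$: it encodes that the $\L(G)$-orbit description of Section~\ref{s:arc-space} genuinely matches the toric local model near the orbit attached to $\nu$. Once this local toric identification is carefully carried out, the rest is linear-algebraic bookkeeping with the piecewise linear data of $\DF'$ and~$\vartheta_X$.
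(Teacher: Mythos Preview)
Your proposal is correct and follows essentially the same approach as the paper: both compute the constant value of $\ord_{K_{X'/X}}$ on the fiber via the identity $\ord_{D'_{\rho_k}}(\alpha)=\langle u_k,\nu\rangle$, extend the sum over exceptional rays to all rays of $\sigma$ using $\vartheta_X(C,\tau,0)=-1$ for non-exceptional $\tau\in\Ray(\DF)$, and then combine the resulting $\LL$-power with the volume formula of Theorem~\ref{t:volume} via the linearity of $\vartheta_X$ on $\sigma$. The paper additionally reduces explicitly to the case $G=T$ by parabolic induction and establishes $\ord_{D'_{\rho_j}}(\alpha)=\langle u_j,\nu\rangle$ by a direct computation with the ideal $I(D'_{\rho_j})=\bigoplus_{m\in(\sigma^\vee\setminus\rho_j^\perp)\cap M} H^0(C_0,\OO_{C_0}(\lfloor\D(m)\rfloor))\otimes\chi^m$, which is precisely the ``toric local picture'' you allude to.
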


\begin{proof}
	We only prove the result for a $T$-variety. The general case is obtained by parabolic induction. Let $\sigma \in \Sigma(\DF')$ be a cone containing $\nu$ and $\D$ be a $\sigma$-polyhedral divisor of $\DF'$ with (affine) locus $C_0$. The rays of $\sigma$ either correspond to exceptional divisors of $\psi : X' \to X$, or are elements of $\Ray(\DF)$. For the exceptional divisors we use the notation of Section~\ref{s:discrepancy}. Up to renumbering we may assume that the exceptional rays of $\sigma$ are $\rho_1,\dots,\rho_a$, and we denote the remaining rays by $\tau_{a+1},\dots,\tau_r$. Write $V_\QQ := \sigma^\vee \cap (-\sigma^\vee)$ and choose a basis $(v_1,\dots,v_{d-1-r})$ of $V_\QQ \cap M$. Let $u_1,\dots,u_a,u_{a+1},\dots,u_r$ be the duals of $\rho_1,\dots,\rho_a,\tau_{a+1},\dots,\tau_r$. 
	
	From \cite[Remark 3.16 (2)]{PetersenSuss} we know that the ideal of a $T$-stable divisor $D_{\rho_j}'$ is given by
	\begin{equation}\label{e:ideal-ray}
		I(D_{\rho_j}') = \bigoplus_{m \in (\sigma^\vee \setminus \rho_j^\perp) \cap M} H^0(C_0,\OO_{C_0}(\lfloor \D(m) \rfloor)) \otimes \chi^m.
	\end{equation}
	If $\alpha \in \val^{-1}(C,\nu,0)$ and $f \otimes \chi^m$ is a homogeneous element of degree $m$ in $A(C_0,\D)$, we have
	\[
		\alpha^*(f\otimes \chi^m) = t^{\nu(m)} \omega(m) \qfun(f),
	\]
	where we use the notation of Remark~\ref{r:data-arc}. Hence 
	$
		\sup \left\{ k \in \NN \mid  \alpha^*(f\otimes \chi^m) \in (t^k) \right\} = \langle m,\nu \rangle.
	$
	
	Write 
        $
		\nu = \nu_1 \rho_1 + \dots + \nu_a \rho_a + \nu_{a+1} \tau_{a+1} + \dots +\nu_r \tau_r.
	$
	We have $\nu_i \in \NN$ for $1 \leq i \leq r$. An element $m$ of $(\sigma^\vee \setminus \rho_j^\perp) \cap M$ can be decomposed as 
	$
		m=m_1 u_1 + \dots + m_r u_r + m_1' v_1 + \dots + m_{d-1-r}' v_{d-1-r}
	$
	with $m_i \in \NN$ for $1 \leq i \leq r$, $m_j \neq 0$, and $m_1', \dots, m_{d-1-r}'$ in $\ZZ$. Hence $\langle m,\nu \rangle = \sum_{i=1}^r m_i \langle u_i,\nu \rangle$. Finally we have
	\[
		\min \{ \langle m,\nu \rangle \mid m \in (\sigma^\vee \setminus \rho_j^\perp) \cap M \} = \langle u_j,\nu \rangle.
	\]
	We have $\ord_{D_{\rho_j}'} \alpha = \langle u_j,\nu \rangle$ and $\ord_{D_{(y_i,p_i)}'} \alpha =0$ since $\alpha \in \val^{-1}(C,\nu,0)$.  We obtain
	\begin{equation}\label{e:vanishing-order}
		\ord_{K_{X'/X}}(\alpha) = \sum_{i=1}^a (-1-\vartheta_X(C,\rho_i,0)) \langle u_i,\nu \rangle.
	\end{equation}
	Now by Equation~\eqref{e:vanishing-order} and Theorem~\ref{t:volume} we get 
	\begin{align*}
		\int_{\val^{-1}(C,\nu,0)} \LL^{-\ord_{K_{X'/X}}} d\mm_{X'} &=\mu_{X'}(\val^{-1}(C,\nu,0)) \LL^{\sum_{i=1}^a (1+\vartheta_X([C,\rho_i,0])) \langle u_i,\nu \rangle} \\
		&= [\Gamma] [G/H] \LL^{-\sum_{j=1}^r \langle u_j,\nu \rangle + \sum_{i=1}^a (1+\vartheta_X(C,\rho_i,0)) \langle u_i,\nu \rangle}.
	\end{align*}
	
	By linearity of $\vartheta_X$ on each Cayley cone and the decomposition of $\nu$ on the integral basis, we obtain
	\[
		\vartheta_X(C,\nu,0) = \sum_{i=1}^a \langle u_i,\nu \rangle \vartheta_X(C,\rho_i,0) + \sum_{j=a+1}^r \langle u_j,\nu \rangle \vartheta_X(C,\tau_j,0).
	\]
	Finally, by definition of $\vartheta_X$, we have $\vartheta_X(C,\tau_j,0)=-1$ for $a+1 \leq j \leq r$, which concludes the proof.
\end{proof}

\begin{corollary}\label{c:integral-hor}
	With the notations of Lemma~\ref{l:integral-hor}, we have
	\[
		\int_{\val^{-1}(C,\nu,0)} \LL^{-\ord_{K_{X'/X}}} d\mm_{X'} = [G/H] [\Gamma] \LL^{\omega_X(C,\nu,0)}.
	\]
\end{corollary}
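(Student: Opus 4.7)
The plan is to invoke Lemma~\ref{l:integral-hor} and then reduce the corollary to the identity $\omega_X(C,\nu,0)=\vartheta_X(C,\nu,0)$ for $(C,\nu,0)\in|\DF|_\Gamma$. Since the support $|\DF'|_\Gamma=|\DF|_\Gamma$ is preserved by the three steps (discoloration, affinization, star subdivision) producing $\DF'$, as recalled in Section~\ref{s:desingularization}, this comparison takes place on the common tail fan $\Sigma(\DF)$ at level $\ell=0$, where both $\vartheta_X$ and $\omega_X$ are piecewise linear---the former by Definition~\ref{d:support-fun} and the latter by the linear ansatz in Proposition-Definition~\ref{pd:support}~(v).

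On each cone $\sigma\in\Sigma(\DF)$ both functions are therefore linear, so it suffices to compare them on the primitive generators of the rays of $\sigma$. For an uncolored ray $\rho$, Proposition-Definition~\ref{pd:support}~(i) gives directly $\omega_X(C,\rho,0)=\vartheta_X(C,\rho,0)$. For a ray of the form $\QQ_{\geq 0}\cdot\varrho(D_\alpha)$ associated with a color $D_\alpha\in\F_\DF$, Proposition-Definition~\ref{pd:support}~(iii) gives $\omega_X(C,\varrho(D_\alpha),0)=-a_\alpha$, while Proposition~\ref{p:theta}~(iii) independently gives $\vartheta_X(C,\varrho(D_\alpha),0)=-a_\alpha$; the two values agree. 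Linearity on $\sigma$ then extends this equality from the rays to all of $\sigma$, which yields $\omega_X(C,\nu,0)=\vartheta_X(C,\nu,0)$ as required.

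There is essentially no real obstacle here beyond combinatorial bookkeeping: the substantive motivic computation has already been carried out in Lemma~\ref{l:integral-hor}, and the only role of the corollary is to recast the exponent in terms of the auxiliary stringy support function $\omega_X$. This reformulation matters because $\omega_X$ is the object suited to the global statement of the main theorem---in particular, it behaves well on the full hyperspace $\N_\QQ$ (including the vertical part $\ell\geq 1$), even though it need not arise from an honest $\QQ$-Cartier divisor, whereas $\vartheta_X$ is more rigidly tied to the canonical class of $X$.
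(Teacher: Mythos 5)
Your proof is correct and takes essentially the same route as the paper's (which simply observes that $\vartheta_X$ and $\omega_X$ coincide on the rays of the tail fan and then invokes linearity); you merely spell out the ray-by-ray comparison more explicitly, treating the uncolored rays via Proposition-Definition~\ref{pd:support}~(i) and the colored rays by matching Proposition-Definition~\ref{pd:support}~(iii) against Proposition~\ref{p:theta}~(iii).
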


\paragraph{{\bf Motivic volume for vertical arcs.}} 
We now consider motivic integrals over vertical components of the arc space $\L_\Gamma(X')$. 
Let $\xi = (y,\nu,\ell)$ be an element of $|\DF'|_\Gamma \cap \N$ such that $\ell \geq 1$.
As in the previous lemma we assume $G=T$ and $H=\{e\}$. Let $\D$ be a polyhedral divisor of $\DF'$ with (affine) locus $C_0$ such that $(\nu,\ell) \in C_y(\D)$, and denote by $\sigma$ the tail of $\D$. Let $r$ be the dimension of $\sigma$ and $s$ be the dimension of $C_y(\D)$.
	Up to renumbering the exceptional divisors of Section~\ref{s:discrepancy}, we may assume that the rays of $\sigma$ either correspond to exceptional divisors $D_{\rho_1}',\dots,D_{\rho_a}'$ of $\psi : X' \to X$, or are elements $\tau_{a+1},\dots,\tau_r$ of $\Ray(\DF)$. Similarly the other rays 
	\[
		\lambda_1 := (\kappa(p_1)p_1,\kappa(p_1)),\dots,\lambda_{s-r} := (\kappa(p_{s-r})p_{s-r},\kappa(p_{s-r}))
	\]
	of $C_y(\D)$ include those associated with the exceptional divisors $D_{(y,p_1)}',\dots,D_{(y,p_{u})}'$.
	
	Let $\Lambda = M \oplus \ZZ$. Write $V_\QQ := C_y(\D)^\vee \cap (-C_y(\D)^\vee)$ and choose a basis $(v_1,\dots,v_{d-s})$ of $V_\QQ \cap \Lambda$. Let $u_1,\dots,u_a,u_{a+1},\dots,u_r$ be the duals of $\rho_1,\dots,\rho_a,\tau_{a+1},\dots,\tau_r$ in $N$ and $w_1,\dots,w_{s-r}$ be the duals of $\lambda_1,\dots,\lambda_{s-r}$. A set of generators of the semigroup $C_y(\D)^{\vee} \cap \Lambda$ is given by
	\[
		(u_1,0),\dots,(u_r,0) ; \pm v_1,\dots,\pm v_{d-s} ; w_1,\dots, w_{s-r}.
	\]

\begin{lemma}\label{l:ideal-vert}
	 We have
	\[
		\int_{\val^{-1}(\xi)} \LL^{-\ord_{K_{X'/X}}} d\mm_{X'} = [G/H] (\LL-1) \LL^{\vartheta_X(\xi)-\sum_{i=1}^{s-r} \left( \kappa(p_i) b_y + \kappa(p_i) \right) \langle w_i , (\nu,\ell) \rangle}.
	\]
\end{lemma}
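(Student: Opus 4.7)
The plan is to mimic the structure of Lemma~\ref{l:integral-hor} but track an extra contribution coming from the vertical exceptional divisors $D_{(y,p_i)}'$. First, I reduce to the case $G=T$, $H=\{e\}$: since $\psi:X'\to X$ is $G$-equivariant and constructed by parabolic induction from the morphism $V(\DF')\to V(\DF)$, every exceptional divisor, every discrepancy coefficient, and every $\val$-fiber factorises through $G/P$, so the factor $[G/P](\LL-1)^{n+1}=[G/H](\LL-1)$ from Theorem~\ref{t:volume} can be split off and the remaining computation carried out on $V(\DF')$.

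The heart of the argument is to compute $\ord_{K_{X'/X}}(\alpha)$ for an arbitrary $\alpha\in\val^{-1}(\xi)$; since this order depends only on $\xi$, the integrand will be constant on the fibre. For any homogeneous $f\otimes\chi^m\in A(C_0,\D)$ the arc $\alpha$ satisfies $\alpha^\star(f\otimes\chi^m)=t^{\ell\ord_y(f)+\langle m,\nu\rangle}\cdot(\text{unit})$. For the horizontal exceptional ray $\rho_j$ ($1\le j\le a$), the ideal description of $D_{\rho_j}'$ given by Equation~\eqref{e:ideal-ray} yields $\ord_{D_{\rho_j}'}(\alpha)=\langle u_j,\nu\rangle$ exactly as in Lemma~\ref{l:integral-hor}. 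The new input is the analogous computation for each vertical exceptional divisor $D_{(y,p_i)}'$: its ideal in $A(C_0,\D)$ is generated by the homogeneous pieces $f\otimes \chi^m$ with $\ell\ord_y(f)+\langle m,p_i\rangle\kappa(p_i)$ strictly positive, and minimising $\ord_t\alpha^\star(f\otimes\chi^m)$ over that ideal yields
\[
\ord_{D_{(y,p_i)}'}(\alpha)\;=\;\langle w_i,(\nu,\ell)\rangle,
\]
where $w_i$ is the dual basis vector to the primitive ray $\lambda_i=(\kappa(p_i)p_i,\kappa(p_i))$ of $C_y(\D)$. Combining these orders with the discrepancy coefficients from Proposition~\ref{p:relative} produces
\begin{align*}
\ord_{K_{X'/X}}(\alpha)
&=\sum_{i=1}^{a}\bigl(-1-\vartheta_X(C,\rho_i,0)\bigr)\langle u_i,\nu\rangle\\
&\quad+\sum_{i=1}^{s-r}\bigl(\kappa(p_i)b_y+\kappa(p_i)-1-\vartheta_X(y,\kappa(p_i)p_i,\kappa(p_i))\bigr)\langle w_i,(\nu,\ell)\rangle.
\end{align*}

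Multiplying by $\mu_{X'}(\val^{-1}(\xi))$ from Theorem~\ref{t:volume}, the exponent of $\LL$ becomes the sum of $-\sum_{j=1}^r\langle u_j,\nu\rangle-\sum_{j=1}^{s-r}\langle w_j,(\nu,\ell)\rangle$ and $-\ord_{K_{X'/X}}(\alpha)$. I then simplify this in two blocks: for the horizontal block, adding $-\sum_{j=a+1}^r\langle u_j,\nu\rangle+\sum_{i=1}^a\bigl(1+\vartheta_X(C,\rho_i,0)\bigr)\langle u_i,\nu\rangle$ and using $\vartheta_X(C,\tau_j,0)=-1$ on the non-exceptional rays yields $\sum_{j=1}^r\vartheta_X(C,\mathrm{ray}_j,0)\langle u_j,\nu\rangle$; for the vertical block, one obtains $\sum_{i=1}^{s-r}\vartheta_X(y,\lambda_i)\langle w_i,(\nu,\ell)\rangle-\sum_{i=1}^{s-r}\bigl(\kappa(p_i)b_y+\kappa(p_i)\bigr)\langle w_i,(\nu,\ell)\rangle$. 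By the linearity of $\vartheta_X$ on the Cayley cone $C_y(\D)$ (Definition~\ref{d:support-fun}(i)), the first sums in each block assemble to $\vartheta_X(y,\nu,\ell)=\vartheta_X(\xi)$, giving exactly the claimed formula.

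The main obstacle is the vertical order computation, i.e.\ establishing $\ord_{D_{(y,p_i)}'}(\alpha)=\langle w_i,(\nu,\ell)\rangle$. Unlike the horizontal case this requires using the Cayley-cone description of the $T$-stable divisors arising from vertices (an analog of \cite[Remark 3.16(2)]{PetersenSuss}), and then verifying that among the generators $(u_1,0),\dots,(u_r,0),w_1,\dots,w_{s-r},\pm v_1,\dots,\pm v_{d-s}$ of $C_y(\D)^\vee\cap\Lambda$ it is precisely $w_i$ that realises the minimum of $\ell\ord_y(f)+\langle m,\nu\rangle$ over the ideal; the $v$-direction contributes $0$ because it lies in the linear part of $C_y(\D)^\vee$. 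Once this is in place, the rest is linear algebra and is forced by the shape of Proposition~\ref{p:relative}.
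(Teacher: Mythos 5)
Your proposal matches the paper's proof in structure and content: compute $\ord_{D_{\rho_j}'}(\alpha)$ and $\ord_{D_{(y,p_i)}'}(\alpha)$ from the ideal descriptions (the latter via the Cayley-cone form of \cite[Remark~3.16]{PetersenSuss}), combine with Proposition~\ref{p:relative} to get $\ord_{K_{X'/X}}(\alpha)$, integrate against Theorem~\ref{t:volume}, and simplify using the linearity of $\vartheta_X$ on $C_y(\D)$. One small slip: the ideal of $D_{(y,p_i)}'$ is cut out by the condition $\ord_y f + \langle m,p_i\rangle > 0$, not by $\ell\,\ord_y f + \kappa(p_i)\langle m,p_i\rangle > 0$ --- neither $\ell$ (which belongs to the arc, not the divisor) nor $\kappa(p_i)$ affects the positivity --- though your stated conclusion $\ord_{D_{(y,p_i)}'}(\alpha) = \langle w_i,(\nu,\ell)\rangle$ is correct and everything downstream goes through.
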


\begin{proof}
	Let $\pi$ in $\CC(C)^*$ be a uniformizer of $y$. Recalling the expression of the ideal $I(D_{\rho_j}')$ from Equation~\eqref{e:ideal-ray} we can compute $\ord_{D_{\rho_j}'}(\alpha)$ by studying $\alpha^*(\pi^k \otimes \chi^m)$ for all $(m,k)$ in $(C_y(\D)^\vee \setminus (\rho_j,0)^\perp) \cap \Lambda.$
	We may then use the same argument as in Lemma~\ref{l:integral-hor}. We obtain that
		$\ord_{D_{\rho_j}'} (\alpha) = \langle u_j,\nu \rangle$
	for any $\alpha$ in $\val^{-1}(\xi)$. Let us now recall the description of $I(D_{(y,p_i)}')$ from \cite[Remark 3.16 (i)]{PetersenSuss}:
	\[
		I(D_{(y,p_i)}') = \bigoplus_{m \in \sigma^\vee \cap M} H^0(C_0,\OO_{C_0}(\lfloor \D(m) \rfloor)) \cap \{ f \in \CC(C_0) \mid \ord_y f + \langle m,p_i \rangle > 0 \} \otimes \chi^m.
	\]
	As before we can compute $\ord_{D_{(y,p_i)}'}(\alpha)$ from the computation of $\alpha^*(\pi^k \otimes \chi^m)$ for any pair $(m,k)$ in $(C_y(\D)^\vee \setminus (p_i,1)^\perp) \cap \Lambda$. We find $\ord_{D_{(y,p_i)}'}(\alpha) = \langle w_i,(\nu,\ell) \rangle.$
	Then it follows from Proposition~\ref{p:relative} that
	\begin{equation}\label{e:vanishing-order-ver}
		\ord_{K_{X'/X}} (\alpha) = \sum_{j=1}^a c_j \langle u_j,\nu \rangle + \sum_{i=1}^{s-r} d_i \langle w_i,(\nu,\ell) \rangle
	\end{equation}
	where $c_j = -1-\vartheta_X(C,\rho_j,0)$ and $d_i = \kappa(p_i)b_y + \kappa(p_i) - 1 - \vartheta_X(y,\kappa(p_i)p_i,\kappa(p_i))$.
	Note that $d_{i}= 0$ for $u<i\leq s-r$. 
	Now by Equation~\eqref{e:vanishing-order-ver} and Theorem~\ref{t:volume},
	\begin{align*}
		\int_{\val^{-1}(\xi)} \LL^{-\ord_{K_{X'/X}}} d\mm_{X'} 
		&=\mu_{X'}(\val^{-1}(\xi)) \LL^{-\sum_{j=1}^a c_j \langle u_j,\nu \rangle 
		- \sum_{i=1}^{s-r} d_i \langle w_i,(\nu,\ell) \rangle} 	\\
		&= [G/H] (\LL-1) \LL^{-\sum_{j=1}^r \langle u_j,\nu \rangle -\sum_{i=1}^{s-r} \langle w_i,(\nu,\ell)\rangle-\sum_{j=1}^a c_j \langle u_j,\nu \rangle 
		- \sum_{i=1}^{s-r} d_i \langle w_i,(\nu,\ell) \rangle}.
	\end{align*}
	Since $\vartheta_X$ is linear on Cayley cones, decomposing $\nu$ on the integral basis, we conclude using the identity
	\begin{align*}
		\vartheta_X(\xi) -\sum_{i=1}^{s-r} \left( \kappa(p_i) b_y + \kappa(p_i) \right) \langle w_i , (\nu,\ell) \rangle &
		= \sum_{j=1}^a (-1-c_j) \langle u_j,\nu \rangle - \sum_{j=a+1}^r \langle u_j,\nu \rangle - \sum_{i=1}^{s-r} (1+d_i) \langle w_i , (\nu,\ell) \rangle.
	\end{align*}
\end{proof}
To rephrase the formula of Lemma~\ref{l:ideal-vert} in terms of the function $\omega_X$ we need  
\begin{proposition}\label{p:identity-vert}
	Let $\DF$ be a colored divisorial fan on $(C,G/H)$. Assume $X:=X(\DF)$ is $\QQ$-Gorenstein and let $X'=X(\DF')$ be the desingularization of $X$ as defined in Equation~\eqref{e:desingularization}. Then for any $(y,p)$ in $\Vert(\DF') \setminus \Vert(\DF)$ we have
	$
		(\vartheta_{X'}-\vartheta_X)(y,\kappa(p)p,\kappa(p)) = -1 - \omega_X(y,\kappa(p)p,\kappa(p)).
	$
\end{proposition}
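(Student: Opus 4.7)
The plan is to compute both sides explicitly and reduce the identity to a linear equation on the Cayley cone $C_y(\D)$, where $\D \in \DF$ is chosen so that the ray generated by $(\kappa(p)p,\kappa(p))$ lies inside $C_y(\D)$. Such a $\D$ exists because $\DF'$ refines $\DF$.

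First I would handle the left-hand side using the smoothness of $X'$. Since $X' = X(\DF')$ is smooth, it is in particular $\QQ$-Gorenstein, and Proposition~\ref{p:theta} applied to $X'$ gives
\[
\vartheta_{X'}(y,\kappa(p)p,\kappa(p)) = \kappa(p)\, b_y + \kappa(p) - 1
\]
for any $(y,p) \in \Vert(\DF')$, where $K_C = \sum_y b_y \cdot [y]$ is the fixed canonical divisor on $C$. Thus the identity to prove reduces to
\[
\omega_X(y,\kappa(p)p,\kappa(p)) = \vartheta_X(y,\kappa(p)p,\kappa(p)) - \kappa(p)(b_y + 1).
\]

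The main content of the proposition is therefore this second identity, and the key step is to show it holds for every $(\nu,\ell) \in C_y(\D)$, not just at vertex rays. I would prove the stronger linear statement
\[
\omega_X(y,\nu,\ell) = \vartheta_X(y,\nu,\ell) - (b_y+1)\,\ell \qquad \text{for all } (\nu,\ell) \in C_y(\D),
\]
by invoking the construction in the proof of Lemma~\ref{l:gorenstein}. There, $\omega_{X,y}$ is obtained from $\vartheta_X$ by adding $\kappa(p)\ord_y(f)$, where $f\in \CC(C)^\star$ is a local equation satisfying $-\ord_y(f) = b_y + 1$. On a vertex ray $(\kappa(p)p,\kappa(p))$ of $C_y(\D)$, this recovers the needed shift; then by linearity of both $\omega_X|_{C_y(\D)}$ and $\vartheta_X|_{C_y(\D)}$ (Proposition-Definition~\ref{pd:support}(v), and Definition~\ref{d:support-fun} applied to $\theta_X$), the identity extends to all of $C_y(\D)$. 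One should also check it on the $\ell=0$ slice: there the equation reduces to $\omega_X(y,\nu,0)=\vartheta_X(y,\nu,0)$, which is precisely condition (i) of Proposition-Definition~\ref{pd:support} on uncolored rays, extended linearly.

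Combining the two steps, the left-hand side becomes $\kappa(p)b_y + \kappa(p) - 1 - \vartheta_X(y,\kappa(p)p,\kappa(p))$, which by the linear identity equals $-1 - \omega_X(y,\kappa(p)p,\kappa(p))$, as required. The main subtle point is the well-posedness of the stronger linear identity across different choices of $\D \in \DF$ containing the given ray: this is guaranteed by the compatibility condition at the end of Proposition-Definition~\ref{pd:support}, since the auxiliary function $f$ used in Lemma~\ref{l:gorenstein} depends only on $y \in C$ and not on $\D$. No difficulty arises from the colors, because the ray $(\kappa(p)p,\kappa(p))$ is never contained in $N_\QQ$ and hence the coefficients $r_\alpha$ play no role in evaluating $\omega_X$ here.
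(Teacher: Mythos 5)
Your proof is correct and takes a genuinely different, and noticeably more direct, route than the paper's. Where you use the explicit construction inside the proof of Lemma~\ref{l:gorenstein} to derive the clean linear identity $\omega_X(y,\nu,\ell)=\vartheta_X(y,\nu,\ell)-(b_y+1)\ell$ on each Cayley cone $C_y(\D)$ (checking agreement on the vertex rays, where both equal $-1$, and on the $\ell=0$ slice, where both reduce to $\vartheta_X$), the paper instead (a) reduces to trivial coloration and affine locus (and $G=T$), (b) packages the Cayley cones at $y$ into a toric $(T\times\CC^\star)$-model via a polyhedral divisor $\hat\D$ on $\PP^1$, (c) computes the relative canonical divisor of the induced toric desingularization in two ways — once via the $\vartheta$-functions, once via the toric formula of \cite[Prop.\ 4.2]{Batyrev-Moreau} expressed with $\omega_X$ — and (d) invokes \cite[Lemma 3.39]{KollarMori:IntroMMP} to upgrade linear equivalence of relative canonicals to equality, followed by a final gluing step for the colored/non-affinized case. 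Your approach bypasses the toric comparison and the uniqueness-of-discrepancy lemma entirely; what you pay for this is having to unwind the construction of $\omega_X$ rather than treating Lemma~\ref{l:gorenstein} and Proposition-Definition~\ref{pd:support} as black boxes, and you implicitly rely (as the paper also does in Proposition~\ref{p:relative}) on the convention that $\vartheta_X$ and $\vartheta_{X'}$ are normalized with respect to the same representative $K_C=\sum b_y\cdot[y]$, so that the same $b_y$ appears on both sides. You might also make explicit that, because each $C_y(\D)$ is strongly convex, a linear function on it is determined by its values on the extremal rays; that is the precise sense in which checking the vertex rays and the $\ell=0$ rays suffices.
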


\begin{proof}
	Since the problem is local we may assume that $\DF=\{ (\D,\F) \}$.
We first show the statement when $\DF = \tilde \DF$, that is, when the coloration $\F$ is trivial and the locus $C_{0}$ of $\D$ is affine. By parabolic induction we may also assume that $G=T$ and $H=\{e\}$. Write
	\[
		\D = \sum_{z \in C_0} \Delta_z \cdot [z]
	\]
	and let $y\in C_0$. Denote $\DF' = \{\D^i\}_{i \in J}$.
	The cones $C_y(\D^i)$ generate a fan $\Sigma'$ in $N_\QQ \oplus \QQ$. Denote by $X_{tor}$ (resp. by $X_{tor}'$) the toric $(T\times \CC^*)$-variety associated with the cone $C_y(\D)\subset N_\QQ \oplus \QQ$ (resp. with the fan $\Sigma'$). By Lemma~\ref{l:gorenstein} we know that $X_{tor}$ is $\QQ$-Gorenstein. Moreover $X_{tor}'$ is smooth, and the proper birational map $X_{tor}' \to X_{tor}$ is a $(T \times \CC^*)$-equivariant resolution of singularities.
	
	Define a $\sigma$-polyhedral divisor $\hat \D$ with locus $\AA^1$ on $(\PP^1,T)$ by setting $\hat \D := \sum_{z \in \AA^1} \hat \Delta_z \cdot [z]$, where
	\begin{equation*}
		\hat \Delta_z = \begin{cases}
							\Delta_y & \text{if $z=0$,} \\
							\sigma & \text{otherwise.}
						\end{cases}
	\end{equation*}
	Similarly, for any $i \in J$, we define a polyhedral divisor $\hat \D^i$ with locus $\AA^1$ associated with $\D^i$, and we let $\hat \DF'$ be the corresponding divisorial fan.
	We have a commutative diagram
	\begin{center}
		\begin{tikzpicture}[description/.style={fill=white,inner sep=2pt}]
			\matrix (m) [matrix of math nodes, row sep=3em,column sep=2.5em, text height=1.5ex, text depth=0.25ex]
		{ 	X_{tor}' & X_{tor} \\
			X(\hat \DF') & X(\hat \D). \\
		};
		\path[-]
			(m-1-1) edge[->] node[auto] {$q'$} (m-1-2)
			(m-2-1) edge[->] node[auto] {$q'$} (m-2-2)
			(m-1-1) edge[double,double distance=3pt] (m-2-1);
		\path[-]
			(m-1-2) edge[double,double distance=3pt] (m-2-2);
		\end{tikzpicture}
	\end{center}
	Indeed, the equality on the right comes from the description of the $\CC$-algebra $A(\AA^1,\hat \D)$ viewed as the semigroup algebra $\CC[C_0(\hat \D)^\vee \cap (M \oplus \ZZ)]$. The other equality is similar.

	Consider the support functions $\theta_X=(\vartheta_X,(-a_\alpha))$ in $\PL(\D)$ and $\theta_{X'}=(\vartheta_{X'},(-a_\alpha))$ in $\PL(\DF')$ satisfying the hypotheses of Proposition~\ref{p:theta}, which induce functions respectively on $C_y(\D)$  and $\Sigma'$. To prove the proposition we compute the discrepancy of $q'$ in two different ways.
	
	Let us first compute the discrepancy of $q' : X(\hat \DF') \to X(\hat \D)$ using the functions $\vartheta_{X'}$ and $\vartheta_X$. By definition the Weil $\QQ$-divisor
	\begin{align*}
		\hat K &:= \sum_{(0,p) \in \Vert(\hat \DF') \setminus \Vert(\hat \D)} (\vartheta_{X(\hat \DF')} -\vartheta_{X(\hat \D)}) (0,\kappa(p)p,\kappa(p)) D_{(0,p)}' + \sum_{\rho \in \Ray(\hat \DF') \setminus \Ray(\hat \D)} (\vartheta_{X(\hat \DF')} -\vartheta_{X(\hat \D)})(\PP^{1},\rho,0) D_\rho' \\
		&= \sum_{(y,p) \in \Vert(\DF') \setminus \Vert(\D)} (\vartheta_{X'} -\vartheta_{X}) (y,\kappa(p)p,\kappa(p)) D_{(0,p)}' + \sum_{\rho \in \Ray(\DF') \setminus \Ray(\D)} (\vartheta_{X'} -\vartheta_{X})(C,\rho,0) D_\rho'
	\end{align*}
	is a relative canonical divisor.
	Next we compute the discrepancy of $q' : X_{tor}' \to X_{tor}$ using the function $\omega_{X}$. By \cite[Proposition 4.2]{Batyrev-Moreau} the Weil $\QQ$-divisor
	\[
		K := \sum_{(y,p) \in \Vert(\DF') \setminus \Vert(\D)} (-1-\omega_X(y,\kappa(p)p,\kappa(p))) D_{(0,p)}'  + \sum_{\rho \in \Ray(\DF') \setminus \Ray(\D)} (-1 -\omega_{X}(y,\rho,0)) D_\rho'
	\]
	is a relative canonical divisor. The divisors $K$ and $\hat K$ are linearly equivalent. By \cite[Lemma 3.39]{KollarMori:IntroMMP} we know that they are in fact equal. Thus
	\begin{equation}\label{e:vartheta-omega}
		-1-\omega_X(y,\kappa(p)p,\kappa(p)) = (\vartheta_{X(\hat \DF')} -\vartheta_{X(\hat \D)}) (0,\kappa(p)p,\kappa(p))
\end{equation}
	for any $(y,p) \in \Vert(\DF') \setminus \Vert(\D)$, which concludes the case where $\DF = \tilde \DF$.
	
	For the case where $\DF$ is general, we remark that the functions $\omega_{X(\tilde \DF)} + \vartheta_{X} - \vartheta_{X(\tilde \DF)}$ and $\omega_{X}$ coincide. Indeed, they are linear on each Cayley cone of $\DF$ and they have the same values on the rays. Hence taking this into account we obtain Equation \eqref{e:vartheta-omega} in the general case, which concludes the proof. 	
\end{proof}

\begin{corollary}\label{c:integral-ver}
	With the notations of Lemma~\ref{l:ideal-vert}, when $\ell \geq 1$, we have
	\[
		\int_{\val^{-1}(y,\nu,\ell)} \LL^{-\ord_{K_{X'/X}}} d\mm_{X'} = [G/H](\LL-1) \LL^{\omega_{X}(y,\nu,\ell)}.
	\]
\end{corollary}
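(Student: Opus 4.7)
The plan is to identify the exponent produced by Lemma~\ref{l:ideal-vert} with $\omega_X(y, \nu, \ell)$ by exploiting the linearity of both support functions on the Cayley cone $C_y(\D)$ and the properties characterizing $\omega_X$ on its rays, combined with Proposition~\ref{p:identity-vert}. First, Lemma~\ref{l:ideal-vert} writes the integral as $[G/H](\LL - 1) \LL^E$ with
\[
E = \vartheta_X(\xi) - \sum_{i=1}^{s-r} (\kappa(p_i) b_y + \kappa(p_i)) \langle w_i, (\nu,\ell) \rangle,
\]
so the corollary reduces to showing $E = \omega_X(y, \nu, \ell)$. Since $\vartheta_X$ and $\omega_X$ are both linear on $C_y(\D)$ by Definition~\ref{d:support-fun} and Proposition-Definition~\ref{pd:support}(v) respectively, the identity can be verified on the rays of $C_y(\D)$.

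Next, because $X' = X(\DF')$ is smooth, the Cayley cone $C_y(\D)$ for $\D \in \DF'$ is a smooth simplicial cone modulo its linear part. In particular, the integral basis $(u_1, 0), \ldots, (u_r, 0), w_1, \ldots, w_{s-r}$ of $C_y(\D)^\vee \cap W_\QQ$ is dual to the primitive generators $(h_j, 0), \lambda_i$ of the rays of $C_y(\D)$, where $h_j$ is the primitive generator of the $j$-th ray of the tail $\sigma$ and $\lambda_i = (\kappa(p_i) p_i, \kappa(p_i))$ for $p_i$ a vertex of $\Delta_y$. I would therefore expand
\begin{align*}
\vartheta_X(\xi) &= \sum_{j=1}^r \vartheta_X(C, h_j, 0) \langle u_j, \nu \rangle + \sum_{i=1}^{s-r} V_i \langle w_i, (\nu,\ell) \rangle, \\
\omega_X(y, \nu, \ell) &= \sum_{j=1}^r \omega_X(y, h_j, 0) \langle u_j, \nu \rangle + \sum_{i=1}^{s-r} \omega_X(y, \lambda_i) \langle w_i, (\nu,\ell) \rangle,
\end{align*}
where $V_i := \vartheta_X(y, \lambda_i)$. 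By Proposition-Definition~\ref{pd:support}(i) we have $\omega_X(y, h_j, 0) = \vartheta_X(C, h_j, 0)$, and by~(ii) we have $\omega_X(y, \lambda_i) = -1$. Consequently, the identity $E = \omega_X(y, \nu, \ell)$ reduces to verifying that $V_i = \kappa(p_i) b_y + \kappa(p_i) - 1$ for every vertex $p_i$ appearing in the expansion.

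For an original vertex $(y, p_i) \in \Vert(\DF)$, this is precisely Proposition~\ref{p:theta}(i) applied directly to $X$. For an exceptional vertex $(y, p_i) \in \Vert(\DF') \setminus \Vert(\DF)$, Proposition~\ref{p:identity-vert} gives
\[
\vartheta_{X'}(y, \lambda_i) - \vartheta_X(y, \lambda_i) = -1 - \omega_X(y, \lambda_i),
\]
which, combined with $\vartheta_{X'}(y, \lambda_i) = \kappa(p_i) b_y + \kappa(p_i) - 1$ (Proposition~\ref{p:theta}(i) applied to the smooth model $X'$) and $\omega_X(y, \lambda_i) = -1$, yields $V_i = \kappa(p_i) b_y + \kappa(p_i) - 1$ by direct arithmetic.

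The main delicate point lies in justifying that the basis $(u_j, 0), w_i$ is indeed dual to the primitive rays of $C_y(\D)$ in the sense used above; this exploits the smoothness of the cones of $\DF'$ guaranteed by the construction of Section~\ref{s:desingularization}, and it underlies both the ray-expansion implicitly used in the proof of Lemma~\ref{l:ideal-vert} and the present comparison between $\vartheta_X$ and $\omega_X$.
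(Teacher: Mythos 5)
Your overall strategy --- expanding both $\vartheta_X(\xi)$ and $\omega_X(y,\nu,\ell)$ in the dual basis $(u_j,0)$, $w_i$ and comparing coefficients ray by ray --- is essentially the approach in the paper, but there is a genuine error in how you evaluate $\omega_X$ on the vertical rays, and the step that is supposed to fix matters for exceptional vertices becomes circular as a result.

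The error is the claim that $\omega_X(y,\lambda_i)=-1$ for \emph{all} rays $\lambda_i$ of $C_y(\D)$ not contained in $N_\QQ$, where $\D\in\DF'$. Proposition--Definition~\ref{pd:support}(ii) asserts $\omega_X(y,\tau)=-1$ only for rays $\tau$ of Cayley cones $C_y(\D^i)$ of the \emph{original} colored divisorial fan $\DF$, i.e.\ for triples coming from $\Vert(\DF)$. An exceptional vertex $(y,p_i)\in\Vert(\DF')\setminus\Vert(\DF)$ lies in the relative interior of some original Cayley cone, so $\omega_X(y,\lambda_i)$ is fixed by linearity and is typically \emph{not} $-1$; indeed, by Proposition~\ref{p:identity-vert} it equals $-1-(\vartheta_{X'}-\vartheta_X)(y,\lambda_i)$, which differs from $-1$ exactly when the discrepancy at $\lambda_i$ is nonzero. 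As a consequence, the reduction ``it suffices to show $V_i=\kappa(p_i)b_y+\kappa(p_i)-1$'' is not valid for exceptional $i$: for those vertices $\vartheta_X(y,\lambda_i)\neq\kappa(p_i)b_y+\kappa(p_i)-1$ in general, so you are reducing the corollary to a false statement. Your subsequent ``direct arithmetic'' step for exceptional vertices feeds $\omega_X(y,\lambda_i)=-1$ back into Proposition~\ref{p:identity-vert}, so it only confirms the arithmetic consistency of the (wrong) assumption and proves nothing.

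The fix, and what the paper actually does, is to keep $\omega_X(y,\lambda_i)$ symbolic. The desired identity reduces, after expanding in the dual basis, to showing $V_i-\kappa(p_i)b_y-\kappa(p_i)=\omega_X(y,\lambda_i)$ for each $i$. For original vertices this is Proposition~\ref{p:theta}(i) for $X$ (giving the left-hand side $=-1$) together with Proposition--Definition~\ref{pd:support}(ii) (giving $\omega_X(y,\lambda_i)=-1$). For exceptional vertices one uses Proposition~\ref{p:identity-vert} together with $\vartheta_{X'}(y,\lambda_i)=\kappa(p_i)b_y+\kappa(p_i)-1$, which holds by Proposition~\ref{p:theta}(i) applied to the \emph{smooth} model $X'$ (this is where the smoothness of $\DF'$ enters, not to force $\omega_X=-1$); the combination yields $\omega_X(y,\lambda_i)=V_i-\kappa(p_i)b_y-\kappa(p_i)$ without ever assuming $\omega_X(y,\lambda_i)=-1$. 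The paper's proof encodes this by rewriting the exponent as $(\vartheta_X-\vartheta_{X'})(y,\nu,\ell)-\sum_i\langle u_i,\nu\rangle-\sum_j\langle w_j,(\nu,\ell)\rangle$ and applying Proposition~\ref{p:identity-vert} ray by ray; once your step 3 is corrected, your argument is the same.
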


\begin{proof}
	We proved in Lemma~\ref{l:ideal-vert} that
	\begin{equation}\label{e:integral}
		\int_{\val^{-1}(y,\nu,\ell)} \LL^{-\ord_{K_{X'/X}}} d\mm_{X'} = [G/H] (\LL-1) \LL^{\vartheta_X(y,\nu,\ell)-\sum_{i=1}^{s-r} \left( \kappa(p_i) b_y + \kappa(p_i) \right) \langle w_i , (\nu,\ell) \rangle}.
	\end{equation}
	By definition of $\vartheta_{X'}$ and $\vartheta_X$, we have
	\[
		\vartheta_X(y,\nu,\ell)-\sum_{i=1}^{s-r} \left( \kappa(p_i) b_y + \kappa(p_i) \right) \langle w_i , (\nu,\ell) \rangle = (\vartheta_X-\vartheta_{X'})(y,\nu,\ell)-\sum_{i=1}^r \langle \nu,u_i \rangle -\sum_{j=1}^r \langle (\nu,\ell),w_j \rangle.
	\]
	Moreover by linearity
	\begin{equation}\label{e:identity-theta1}
		(\vartheta_{X}-\vartheta_{X'})(y,\nu,\ell) = \sum_{i=1}^r \langle \nu,u_i \rangle (\vartheta_X-\vartheta_{X'})(y,\rho_i,0) + \sum_{j=1}^{s-r} \langle (\nu,\ell),w_j \rangle (\vartheta_X-\vartheta_{X'})(y,\kappa(p_j)p_j,\kappa(p_j)).
	\end{equation}
	By the proof of Corollary~\ref{c:integral-hor} and Proposition~\ref{p:identity-vert} we have
	$$(\vartheta_X-\vartheta_{X'})(y,\rho_i,0) = \omega_X(y,\rho_i,0)-1 \:\:\text{and}\:\:
		(\vartheta_X-\vartheta_{X'})(y,\kappa(p_j)p_j,\kappa(p_j)) = 1 + \omega_X(y,\kappa(p_j)p_j,\kappa(p_j)).$$
	We obtain
	\begin{equation}\label{e:identity-theta2}
		(\vartheta_{X}-\vartheta_{X'})(y,\nu,\ell) - \sum_{i=1}^r \langle \nu,u_i \rangle - \sum_{j=1}^{s-r} \langle (\nu,\ell),w_j \rangle = \omega_X(y,\nu,\ell).
	\end{equation}
	Replacing the exponent of $\LL$ in the right-hand side of Equation~\eqref{e:integral} by $\omega_X(y,\nu,\ell)$ using Equations~\eqref{e:identity-theta1} and~\eqref{e:identity-theta2}, we obtain the stated result.
\end{proof}

\paragraph{{\bf The stringy motivic volume.}} 

\begin{theorem}\label{t:stringy-volume}
	Let $\DF$ be a colored divisorial fan on $(C,G/H)$ such that $X=X(\DF)$ is $\QQ$-Gorenstein with log terminal singularities. Then for any open dense subset $\Gamma$ in $C \setminus \Sp(\DF)$ we have
	\[
		\E(X) = [G/H] \sum_{(y,\nu,\ell) \in |\DF|_\Gamma \cap \N} [X_\ell]\, \LL^{\omega_X(y,\nu,\ell)},
	\]
	where $X_0=\Gamma$ and $X_\ell=\AA^1\setminus \{0\}$ if $\ell \geq 1$. 
        The stringy $E$-function of $X$ is computed as follows
	\[
		\EE(X;u,v) = E(G/H;u,v) \sum_{(y,\nu,\ell) \in |\DF|_\Gamma \cap \N} E(X_\ell;u,v)\, (uv)^{\omega_X(y,\nu,\ell)},
	\]
	where $E(X_\ell;u,v) = E(\Gamma;u,v)$ if $\ell=0$, else $E(X_\ell;u,v)=uv-1$.
\end{theorem}

\begin{proof}
This follows from Lemma~\ref{l:E-decomposition} and Corollaries~\ref{c:integral-hor} and~\ref{c:integral-ver}.
\end{proof}
\paragraph{{\bf Rational form and candidate poles.}}
We end this section by giving a rational expression of the stringy motivic volume $\E(X)$ in terms of the combinatorial data $\DF$.
For a special point $y$ in $\Sp(\DF)$, we denote by $\DF_{y}$ the fan generated by the Cayley cones of the form $C_{y}(\D^{i})$, where $i\in J$.
Let $\tau$ belong to $\DF_{y}$. Fix a fan $\Sigma_{\tau}$ with support $\tau$ such that
every cone of $\Sigma_{\tau}$ is simplicial and the cones of dimension one in $\Sigma_{\tau}$ are exactly the faces of dimension one of $\tau$. Such a fan always exists (see \cite[V.4]{Ewa96}). 

For an arbitrary polyhedral cone $\lambda\subset N_{\QQ}\oplus \QQ$, we denote by $\lambda(1)$ the set of primitive generators in $N\oplus\ZZ$ of the rays of $\lambda$. The \emph{fundamental parallelotope} of $\lambda$ is the set
\[
	P_{\lambda} := \left\{\begin{array}{c|c} \sum_{\rho\in \lambda(1)}\mu_{\rho}\rho\in N\oplus\ZZ &  0\leq \mu_{\rho}<1, \, \rho\in\lambda(1) \end{array}\right\}\subset N_{\QQ}.
\]
We also denote by the symbol $\prop(\lambda)$ the set of proper faces of $\lambda$. 
For an element $v\in N\oplus \ZZ$ we write $\chi^{v}$ for the
Laurent monomial associated with $v$. We define two functions $L_{1}$ and $L_{2}$ via the equalities
$$L_{1}(\gamma) := \left( \sum_{v\in P_{\gamma}}\chi^{v}\right)\prod_{\rho\in\tau(1)\setminus \gamma(1)}(1-\chi^{\rho})
\:\:\text{and}\:\: 
	L_{2}(\gamma) := L_{1}(\gamma) - \sum_{\gamma'\in\prop(\gamma)}L_{1}(\gamma'),
$$
for every cone $\gamma\in \Sigma_{\tau}$. We also introduce
$
	Q(\tau, \Sigma_{\tau}): = \sum_{\gamma\in \Sigma_{\tau}\setminus \prop(\tau)}L_{2}(\gamma).
$
\begin{theorem}\label{t:integral-points}
		Let $\tau\subset N_{\QQ}\oplus\QQ$ be a strongly convex simplicial polyhedral cone. 
		Then 
		\[
			\sum_{u \in \tau \cap N\oplus\ZZ} \chi^u = \frac{\sum_{n \in P_\tau} \chi^n}{\prod_{\rho\in\tau(1)}(1-\chi^{\rho})}.
		\]
\end{theorem}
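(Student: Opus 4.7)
The plan is to exploit the simpliciality of $\tau$ to establish a canonical bijection between lattice points in $\tau$ and pairs consisting of a point in the fundamental parallelotope together with a vector of non-negative integer coefficients, and then to read off the rational form from the geometric series expansion in each ray direction.

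More precisely, let $\rho_1, \ldots, \rho_k$ denote the primitive lattice generators of the rays $\tau(1)$, where $k = \dim \tau$. Since $\tau$ is simplicial and strongly convex, the $\rho_i$ are linearly independent over $\QQ$, so every point $u \in \tau$ has a unique expression $u = \sum_{i=1}^k \mu_i \rho_i$ with $\mu_i \in \QQ_{\geq 0}$. The first step is to establish the following claim: every $u \in \tau \cap (N \oplus \ZZ)$ can be written uniquely as
\[
u = n + \sum_{i=1}^k n_i \rho_i, \qquad n \in P_\tau, \quad n_i \in \ZZ_{\geq 0}.
\]
For this I would set $n_i := \lfloor \mu_i \rfloor$ and $f_i := \mu_i - n_i \in [0,1)$; then $n := \sum_i f_i \rho_i$ lies a priori in $N_\QQ \oplus \QQ$, but the identity $n = u - \sum_i n_i \rho_i$ forces $n \in N \oplus \ZZ$, so $n \in P_\tau$ by definition of the fundamental parallelotope. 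Uniqueness follows from the $\QQ$-linear independence of the $\rho_i$, since the coefficients $f_i \in [0,1)$ and $n_i \in \ZZ_{\geq 0}$ are determined by $\mu_i = f_i + n_i$.

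The second step is purely formal: using the bijection above, the series rearranges as
\[
\sum_{u \in \tau \cap (N \oplus \ZZ)} \chi^u \;=\; \sum_{n \in P_\tau} \chi^n \;\cdot\; \prod_{i=1}^k \Bigl( \sum_{n_i \geq 0} \chi^{n_i \rho_i} \Bigr) \;=\; \frac{\sum_{n \in P_\tau} \chi^n}{\prod_{\rho \in \tau(1)}(1 - \chi^{\rho})},
\]
where the geometric series $\sum_{n_i \geq 0} \chi^{n_i \rho_i} = (1 - \chi^{\rho_i})^{-1}$ is interpreted in the appropriate ring of formal Laurent series (or equivalently, in the localization of $\CC[N \oplus \ZZ]$ at the $1 - \chi^{\rho_i}$, which is justified by choosing a linear form that is strictly positive on $\tau \setminus \{0\}$ and arguing convergence in the corresponding completed semigroup algebra).

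The argument is essentially bookkeeping and does not present any substantial obstacle; the only point requiring a modicum of care is the formal framework in which the geometric series is legitimate, but this is standard in toric geometry and is, for instance, exactly the content of the proof of Gordan's lemma cited in \cite[Proposition 1.2.17]{Cox:ToricVarieties}.
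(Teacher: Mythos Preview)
Your argument is correct and is precisely the standard proof of this identity; the paper does not give its own argument but simply cites \cite[Proposition 1.2.17]{Cox:ToricVarieties}, whose proof is exactly the fundamental-parallelotope decomposition you have written out.
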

\begin{proof} 
Follows from Gordan Lemma's proof, see \cite[Proposition 1.2.17]{Cox:ToricVarieties} and \cite[Section 2]{Bri88}.
\end{proof}

We give an interpretation of the polynomial $Q(\tau, \Sigma_{\tau})$ in terms of the generating function
\[
	S(\tau) = \sum_{u\in \tau^{\circ}\cap (N\oplus\ZZ)}\chi^{u},
\]
where $\tau^{\circ}$ is the relative interior of $\tau$.

\begin{lemma}\label{l:generating-function-rat-cone}
With the same notation as above,
\[
	S(\tau) = Q(\tau, \Sigma_{\tau})\prod_{\rho\in\tau(1)}(1-\chi^{\rho})^{-1}.
\]
The polynomial $Q(\tau, \Sigma_{\tau})$ does not depend on the choice of $\Sigma_{\tau}$ and it will be denoted by $Q(\tau)$.

\end{lemma}

\begin{lemma}\label{l:vertex}
Let $\D$ be a proper $\sigma$-polyhedral divisor on $\PP^{1}$. If $\rho$ is a ray of $\sigma$ such
that $\deg(\D)\cap\rho\neq \emptyset$, then there exists $\lambda\in\QQ_{>0}$ and a vertex $v\in \deg(\D)$ such that $\rho = \lambda v$. 
\end{lemma}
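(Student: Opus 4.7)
The plan is to analyze the face $F := \deg(\D)\cap \rho$ of $\deg(\D)$ and identify its unique vertex with the desired $v$.

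First I would record two preliminary observations. Since each $\Delta_y$ is a $\sigma$-polyhedron and $\deg(\D)=\sum_{y}\Delta_y$, the degree is itself a $\sigma$-polyhedron with recession cone equal to $\sigma$ (using the standard fact that the Minkowski sum of $\sigma$-polyhedra is a $\sigma$-polyhedron). Secondly, from the first properness condition $\deg(\D)\subsetneq \sigma$ one deduces $0\notin \deg(\D)$: indeed, if $0\in \deg(\D)$, then the stability of $\deg(\D)$ under translation by its recession cone would give $\sigma=0+\sigma\subseteq \deg(\D)$, contradicting the strict inclusion.

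Next, since $\rho$ is a face of $\sigma$ and $\deg(\D)\subseteq \sigma$, I would observe that $F=\deg(\D)\cap \rho$ is a face of $\deg(\D)$. Concretely, pick $m\in \sigma^\vee$ with $\rho=\sigma\cap m^\perp$; then $F=\deg(\D)\cap m^\perp$ exhibits $F$ as the face of $\deg(\D)$ cut out by the supporting hyperplane $m^\perp$. By the standard formula for recession cones of intersections,
\[
\operatorname{rec}(F)=\operatorname{rec}(\deg(\D))\cap \operatorname{rec}(\rho)=\sigma\cap\rho=\rho.
\]
Since $F\neq \emptyset$ by hypothesis and $F\subseteq \rho$, while $\operatorname{rec}(F)=\rho$ is $1$-dimensional, $F$ must be a closed half-line inside $\rho$. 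Writing $F=v+\rho$ with $v$ its unique vertex, one concludes that $v$ is a vertex of $\deg(\D)$ (vertices of a face of a polyhedron are vertices of the polyhedron itself).

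Finally I would combine the pieces: $v\in \rho$ and $v\in \deg(\D)$, and by the first step $v\neq 0$. Hence $v$ is a nonzero rational point on the ray $\rho$, so under the identification of $\rho$ with its primitive lattice generator one has $\rho=\lambda v$ for some $\lambda\in\QQ_{>0}$, as required. The argument is essentially a convex-geometric manipulation; the only point that requires some care is verifying that the recession cone of $\deg(\D)$ is exactly $\sigma$ (so that the computation $\operatorname{rec}(F)=\rho$ goes through) and that $\deg(\D)\subsetneq\sigma$ rules out $0\in\deg(\D)$. Neither constitutes a genuine obstacle.
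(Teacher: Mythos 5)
Your proposal is correct and follows essentially the same route as the paper: both arguments choose a supporting hyperplane $\Pi$ of $\sigma$ with $\Pi\cap\sigma=\rho$ and use properness ($\deg(\D)\subset\sigma$) to conclude that $\Pi\cap\deg(\D)=\deg(\D)\cap\rho$ is a nonempty face of $\deg(\D)$ contained in $\rho$. You make explicit two points the paper leaves implicit but which are genuinely needed: the recession-cone computation showing this face is a half-line $v+\rho$ (so it has a unique vertex, which is then a vertex of $\deg(\D)$), and the check that $v\neq 0$, which you deduce correctly from $\deg(\D)\subsetneq\sigma$ together with $\deg(\D)+\sigma=\deg(\D)$. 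These additions improve on the paper's terse phrasing without changing the underlying idea.
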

\begin{proof}
Straightforward.
\end{proof}

Using the combinatorial description of the log terminal condition given in Theorem~\ref{t:canonical}, we study the sign of $\omega_{X}$ on each cone of the fan $\DF_{y}$, where $y$ is a special point of $\DF$. 

\begin{lemma}\label{l:neg-omega}
Let $\tau\in \DF_{y}$ and $\rho\in \tau(1)$. Then $\omega_{X}(y, \rho)<0$.
\end{lemma}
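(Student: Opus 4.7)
The plan is to prove the statement by a case analysis on the nature of the ray $\rho$. Since $\tau \in \DF_y$ lies in the fan generated by the Cayley cones $C_y(\D^i)$, and the $C_y(\D^i)$ form a fan (by the common-face condition in the definition of a colored divisorial fan), any ray $\rho \in \tau(1)$ is a ray of some Cayley cone $C_y(\D)$ with $(\D,\F)\in \DF$. There are two possibilities: either $\rho \not\subset N_\QQ$, or $\rho \subset N_\QQ$ (equivalently, $\rho$ is a ray of the tail $\sigma$ of $\D$, since $C_y(\D)\cap(N_\QQ\times\{0\})=\sigma\times\{0\}$).

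In the first case, property (ii) of Proposition-Definition~\ref{pd:support} gives $\omega_X(y,\rho)=-1<0$. In the second case, split further according to whether $\rho$ is colored (meets $\varrho(\F)$), uncolored and in $\Ray(\DF)$, or uncolored but not in $\Ray(\DF)$. If $\rho=\QQ_{\geq 0}\varrho(D_\alpha)$ is colored, property (iii) gives $\omega_X(y,\rho)=-a_\alpha$; here the positivity $a_\alpha>0$ for $\alpha\in\Phi\setminus I$ follows from the standard fact that $2\rho_P=\sum_{\beta\in R^+\setminus R_I}\beta$ is a strictly positive combination of the fundamental weights $\omega_{\alpha'}$ for $\alpha'\in \Phi\setminus I$, so $a_\alpha=\langle 2\rho_P,\alpha^\vee\rangle>0$. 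If $\rho$ is uncolored and in $\Ray(\DF)$, then by property (i) combined with Proposition~\ref{p:theta}(ii), $\omega_X(y,\rho)=\vartheta_X(y,\rho,0)=-1$.

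The main obstacle is the remaining case: $\rho$ is uncolored, $\rho\notin\Ray(\DF)$, so the locus $C^i$ of $\D$ is projective and $\rho\cap\deg(\D)\neq\emptyset$. By Theorem~\ref{t:canonical} (log terminal criterion), $C^i=\PP^1$ and $\sum_{y\in\PP^1}(1-1/\kappa_y)<2$. By Lemma~\ref{l:vertex} we may replace $\rho$ by a vertex $v$ of $\deg(\D)$ with $\rho=\QQ_{\geq 0}v$, and decompose $v=\sum_{y'\in\Sp(\D)}p_{y'}$ with $p_{y'}\in\Delta_{y'}$ a vertex. The strategy is then to compute $\omega_X(y,v,0)=\vartheta_X(y,v,0)$ using the linear expression $\vartheta_X(y,\nu,\ell)=\langle m,\nu\rangle+\ell c_y$ from Definition~\ref{d:support-fun}(ii) on $\PP^1$. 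From Proposition~\ref{p:theta}(i) applied at each $(\kappa(p_{y'})p_{y'},\kappa(p_{y'}))$, one obtains the relations
\[
	\langle m,p_{y'}\rangle+c_{y'}=b_{y'}+1-1/\kappa(p_{y'}),
\]
for $y'\in\Sp(\D)$ (and $c_{y'}=b_{y'}$ otherwise, since $\Delta_{y'}=\sigma$ forces $p_{y'}=0$, $\kappa(p_{y'})=1$). Summing over all $y'\in\PP^1$ and using $\sum_{y'} c_{y'}=\deg(\div f)=0$ together with $\sum_{y'} b_{y'}=\deg K_{\PP^1}=-2$ yields
\[
	\langle m,v\rangle = \sum_{y'\in\Sp(\D)}\bigl(1-1/\kappa(p_{y'})\bigr)-2.
\]
Since $\kappa(p_{y'})\leq\kappa_{y'}$, the right-hand side is bounded above by $\sum_{y'}(1-1/\kappa_{y'})-2$, which is negative by the log terminal hypothesis. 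This gives $\omega_X(y,\rho)<0$ in the last case and completes the proof.
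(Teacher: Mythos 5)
Your proposal is correct and follows essentially the same case analysis as the paper's own proof: rays outside $N_\QQ$ give $-1$ by property (ii) of the stringy support function, colored rays give a negative multiple of $a_\alpha$, uncolored rays in $\Ray(\DF)$ give $-1$, and in the remaining case you invoke the log-terminal criterion (Theorem~\ref{t:canonical}) together with Lemma~\ref{l:vertex} to decompose a vertex of $\deg(\D)$ and compute $\omega_X(C,v,0) = \sum_{y'}(1-1/\kappa(p_{y'}))-2 < 0$, exactly as the paper does. The only noteworthy difference is that you explicitly justify $a_\alpha>0$ via the expansion of $2\rho_P$ in fundamental weights, which the paper leaves implicit.
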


\begin{proof}
Consider $(\D, \F) \in \DF$ such that $\rho$ is
a primitive generator of a ray of $C_{y}(\D)$. Assume first that $\rho\in \sigma(1)$, where $\sigma$ is the tail of $\D$.  If $\rho\in \Ray(\DF)$ or if a color of $\F$ is mapped onto the ray $\QQ_{\geq 0}\rho$, then by Proposition~\ref{pd:support} the rational number $\omega_{X}(y, \rho)$ is negative. 
Otherwise, by Theorem~\ref{t:canonical} the log terminal condition implies
that the locus of $\D$ is the projective line $\PP^{1}$ and that $\deg(\D)\cap \QQ_{\geq 0}\rho\neq \emptyset$. By Lemma~\ref{l:vertex}, there exists
a vertex $v$ of $\deg(\D)$ 
such that $\rho = \mu v$ for some $\mu\in\QQ_{>0}$. 

Let us write $\D = \sum_{z\in \PP^{1}}\Delta_{z}\cdot [z]$. Then (see Definition~\ref{d:support-fun}), we can find $e\in M$, $\alpha\in \QQ_{>0}$ and $f\in\CC(\PP^{1})^{*}$ such that for any $z\in\PP^{1}$ and any vertex $v$ in $\Delta_{z}$ we have 
\begin{equation}\label{e:kappa}
\theta_{X}(z, \kappa(v)v, \kappa(v)) = \alpha \kappa(v)(\langle e, v\rangle  + \ord_{z}(f)) = \kappa(v)b_{z} + \kappa(v) - 1,
\end{equation}
where $K_{\PP^{1}} = \sum_{z\in \PP^{1}}b_{z}\cdot [z]$
is a canonical divisor. Let $(v_{z})_{z\in\PP^{1}}$ be a sequence of elements of $N_{\QQ}$
such that for any $z\in\PP^{1}$, $v_{z}$ is a vertex of $\Delta_{z}$ and 
$v=\sum_{z\in\PP^{1}}v_{z}$. By Equation~\eqref{e:kappa} and Proposition~\ref{pd:support} we obtain  
\begin{equation}\label{e:omega}
	\omega_{X}(C,v,0)
 = \sum_{z\in\PP^{1}}\frac{1}{\alpha \kappa(v_{z})}\left( \kappa(v_{z})(\langle e, v_{z}\rangle  + \ord_{z}(f)\right) \\
= \frac{1}{\alpha}\left(\deg\,K_{\PP^{1}} + \sum_{z\in\PP^{1}}\left(1-\frac{1}{\kappa(v_{z})}\right)\right)<0.
\end{equation}
The inequality is a consequence of the fact that $\deg\, K_{\PP^{1}} = -2$
and of the log terminal assumption on $X$ (see Theorem~\ref{t:canonical}). If $\rho$ does not belong to $N_{\QQ}$,
then $\rho = (\kappa(w)w,\kappa(w))$ for some vertex $w$ of $\Delta_{y}$, and finally
$\omega_{X}(y, \rho) = \theta_X(y,\kappa(w)w,\kappa(w)) = -1$.
\end{proof}
\begin{corollary}\label{c:formal-completion}
Let $y\in \Sp(\DF)$ be a special point and let $\tau\in \DF$. Let $\CC[[\tau\cap (N\oplus\ZZ)]]$ be the formal completion of the ring 
$\CC[\tau\cap (N\oplus\ZZ)]$ with respect to the ideal $I_{\tau}$ generated
by the set $\{\chi^{v}\,|\,v\in\tau\cap (N\oplus\ZZ)\setminus\{0\}\}$. Elements of $I_\tau$ are of the form
$\sum_{v\in \tau\cap (N\oplus\ZZ)}a_{v}\,\chi^{v}$ with $a_{v}\in\CC$.
Let $m := \min\{ a\in\ZZ_{>0} \mid a\cdot  \omega_{X} \text{ takes its values in }\ZZ\}$.
Then the map
\[
	\varphi_\tau: (\CC[\tau\cap (N\oplus\ZZ)], I_{\tau})\rightarrow (\CC[\LL^{-\frac{1}{m}}], \LL^{-\frac{1}{m}}\,\CC[\LL^{-\frac{1}{m}}]), \; \chi^{v} \mapsto \LL^{\omega_{X}(y,v)}
\]
is a well-defined continuous morphism, and therefore it extends to the formal completions
\[
	\bar{\varphi_\tau}: \CC[[\tau\cap (N\oplus\ZZ)]] \rightarrow \CC[[\LL^{-\frac{1}{m}}]], \; \chi^{v} \mapsto \LL^{\omega_{X}(y,v)}.
\]
\end{corollary}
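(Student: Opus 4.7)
The plan is to verify three things in sequence: that $\varphi_\tau$ is a well-defined ring morphism, that its image on $I_\tau$ lands in $\LL^{-1/m}\CC[\LL^{-1/m}]$, and that it is continuous for the respective $I$-adic topologies. The extension $\bar\varphi_\tau$ to formal completions is then automatic by the universal property of completion.

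First I would observe that $\tau$, being a cone in the fan $\DF_y$, is a face of some Cayley cone $C_y(\D^i)$. By Proposition-Definition~\ref{pd:support}~(v), the function $\omega_X(y,\cdot)$ is $\QQ$-linear on $C_y(\D^i)$, hence on $\tau$. Linearity gives $\omega_X(y,u+v)=\omega_X(y,u)+\omega_X(y,v)$ for $u,v\in\tau\cap(N\oplus\ZZ)$, so $\chi^v \mapsto \LL^{\omega_X(y,v)}$ is multiplicative on the semigroup $\tau\cap(N\oplus\ZZ)$ and extends uniquely to a $\CC$-algebra homomorphism on the semigroup algebra $\CC[\tau\cap(N\oplus\ZZ)]$. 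By the definition of $m$, the exponents lie in $\tfrac{1}{m}\ZZ$, so a priori the image sits in $\CC[\LL^{\pm 1/m}]$.

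The essential step is to strengthen this to $\omega_X(y,v)\leq -1/m$ for every nonzero $v\in\tau\cap(N\oplus\ZZ)$. Lemma~\ref{l:neg-omega} provides the rayвwise negativity $\omega_X(y,\rho)<0$ for each primitive generator $\rho\in\tau(1)$. Since $\tau$ is a strongly convex rational polyhedral cone, any $v\in\tau$ admits a decomposition
\[
v=\sum_{\rho\in\tau(1)}a_\rho\,\rho,\qquad a_\rho\in\QQ_{\geq 0},
\]
with at least one $a_\rho>0$ whenever $v\neq 0$. Linearity of $\omega_X(y,\cdot)$ then forces $\omega_X(y,v)<0$, and since $m\,\omega_X(y,v)\in\ZZ$ this upgrades to $\omega_X(y,v)\leq -1/m$. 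This exponent bound gives $\varphi_\tau(I_\tau)\subseteq \LL^{-1/m}\CC[\LL^{-1/m}]$ and in particular places the full image of $\varphi_\tau$ inside $\CC[\LL^{-1/m}]$.

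For continuity, I would iterate the same bound: $I_\tau^k$ is generated by monomials $\chi^{v_1+\cdots+v_k}$ with each $v_i\in\tau\cap(N\oplus\ZZ)\setminus\{0\}$, and linearity yields
\[
\omega_X(y,v_1+\cdots+v_k)=\sum_{i=1}^{k}\omega_X(y,v_i)\leq -k/m,
\]
so $\varphi_\tau(I_\tau^k)\subseteq \LL^{-k/m}\CC[\LL^{-1/m}]$. This is exactly continuity of $\varphi_\tau$ for the $I_\tau$-adic topology on the source and the $\LL^{-1/m}\CC[\LL^{-1/m}]$-adic topology on the target, so $\varphi_\tau$ extends uniquely to a continuous morphism $\bar\varphi_\tau$ between the completions. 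The only substantive ingredient is the strict negativity $\omega_X(y,\rho)<0$ on the rays of $\tau$, which is Lemma~\ref{l:neg-omega} (and rests ultimately on the log terminal hypothesis via Theorem~\ref{t:canonical}); everything else in the proof is formal manipulation with semigroup algebras and $I$-adic topologies.
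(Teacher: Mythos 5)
Your proposal is correct and follows essentially the same route as the paper: both rest on Lemma~\ref{l:neg-omega} to get $\varphi_\tau(I_\tau)\subset\LL^{-1/m}\CC[\LL^{-1/m}]$ and then invoke the universal property of $I$-adic completion (the paper cites Matsumura \S(23.H) for this). You simply spell out what the paper compresses: the passage from negativity on the rays of $\tau$ to negativity on all nonzero lattice points via linearity and strong convexity, the integrality upgrade from $<0$ to $\leq -1/m$, and the iterated bound on $I_\tau^k$ that gives continuity.
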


\begin{proof}
By Lemma~\ref{l:neg-omega} we have $\varphi_{\tau}(I_{\tau})\subset \LL^{-\frac{1}{m}}\,\CC[\LL^{-\frac{1}{m}}]$ and $\bar{\varphi_\tau}$ exists (see \cite[\S (23.H)]{Matsumura:CommAlg}).
\end{proof}

Inspired by the theory of Stanley--Reisner rings (see also \cite[Section 6]{Batyrev-Moreau}), we introduce 

\begin{definition}
Let $y\in \Sp(\DF)$ and consider $\tau\in\DF_{y}$. 
Let $m\in \ZZ_{>0}$ as in \ref{c:formal-completion}.
According to \emph{loc. cit.}
we have $\bar{\varphi}_{\tau}(Q(\tau))\in \ZZ[t^{-1}]$, where $t = \LL^{\frac{1}{m}}$ and $Q(\tau) = Q(\tau, \Sigma_{\tau})$ is the function of Lemma~\ref{l:generating-function-rat-cone}. The \emph{Stanley--Reisner polynomial} associated with the pair $(\tau, \omega_{X})$ is the polynomial 
\[
	P(\tau,  \omega_{X}):= P(\tau,  \omega_{X})(t) = \LL^{\eta(\tau, \omega_{X})}\cdot \bar{\varphi}_{\tau}(Q(\tau)) \in \ZZ[t],
\]
where $\eta(\tau, \omega_{X})\in \frac{1}{m}\ZZ$ is the degree of $P(\tau,  \omega_{X})$ divided by $m$. 
\end{definition}

The following result discribes a rational form of the stringy motivic volume $\E(X)$ in terms of the function $\omega_{X}$. 
\begin{theorem}\label{t:candidate-poles}
For every $y\in \Sp(\DF)$, let us denote by $\DF_y^{*}$ the set of cones $\tau$ of $\DF_{y}$ such that $\tau\not\subset N_{\QQ}$. Let $\Gamma = C\setminus \Sp(\DF)$ and consider the tail fan $\Sigma(\DF)$ of $\DF$. Then the stringy motivic volume of $X = X(\DF)$ is 
\begin{align*}
\E(X) =& [G/H][\Gamma]\sum_{\tau\in \Sigma(\DF)}P(\tau,  \omega_{X})\LL^{-\eta(\tau, \omega_{X})}\prod_{\rho\in\tau(1)}\left(1-\LL^{\omega_{X}(C,\rho, 0)}\right)^{-1}\\
&+ [G/H](\LL-1)\sum_{y\in\Sp(\DF)}\sum_{\tau\in \DF^{*}_{y}}P(\tau,  \omega_{X})\LL^{-\eta(\tau, \omega_{X})}\prod_{\rho\in\tau(1)}\left(1-\LL^{\omega_{X}(y,\rho)}\right)^{-1},
\end{align*}
where  $P(\tau,  \omega_{X})$ is the Stanley--Reisner polynomial associated with the pair $(\tau, \omega_{X})$.
\end{theorem}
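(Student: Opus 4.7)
The starting point is Theorem~\ref{t:stringy-volume}, which expresses $\E(X)$ as a sum of the motivic weights $[X_\ell]\LL^{\omega_X(y,\nu,\ell)}$ over $(y,\nu,\ell)\in|\DF|_\Gamma\cap\N$. The first move is to split this sum according to whether $\ell=0$ (horizontal part) or $\ell\geq1$ (vertical part). Using $\Gamma=C\setminus\Sp(\DF)$ and the definition of $|\DF|_\Gamma$, the horizontal contribution is indexed by $\nu\in|\Sigma(\DF)|\cap N$ with weight $[\Gamma]\LL^{\omega_X(C,\nu,0)}$, while the vertical contribution breaks up as a double sum over $y\in\Sp(\DF)$ and over $(\nu,\ell)\in|\DF_y|\cap(N\oplus\ZZ)$ with $\ell\geq1$, with weight $(\LL-1)\LL^{\omega_X(y,\nu,\ell)}$. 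The factor $[G/H]$ pulls out of both sums.

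Next I would refine the partition by cones. For the horizontal part, the support $|\Sigma(\DF)|$ decomposes as the disjoint union of the relative interiors $\tau^\circ$ of the cones $\tau\in\Sigma(\DF)$. For the vertical part the support $|\DF_y|$ decomposes likewise, and the condition $\ell\geq1$ amounts to requiring $\tau^\circ$ to meet the half-space $\ell>0$, which is exactly the condition $\tau\in\DF_y^\star$ (cones not contained in $N_\QQ$); cones lying in $N_\QQ$ contribute only lattice points with $\ell=0$, which are already absorbed into the horizontal sum. Thus for a fixed $y$ and a fixed $\tau$, the remaining task is to compute the generating series $\sum_{v\in\tau^\circ\cap(N\oplus\ZZ)} \LL^{\omega_X(y,v)}$.

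This is where the technical inputs of Section 5 enter. By Corollary~\ref{c:formal-completion}, the assignment $\chi^v\mapsto\LL^{\omega_X(y,v)}$ defines a continuous algebra morphism $\bar\varphi_\tau$ from the formal completion of $\CC[\tau\cap(N\oplus\ZZ)]$ into $\CC[[\LL^{-1/m}]]$; this is exactly the content needed to evaluate the formal series $S(\tau)=\sum_{v\in\tau^\circ\cap(N\oplus\ZZ)}\chi^v$ to obtain an element of $\Mc(\LL^{1/m})$. Applying $\bar\varphi_\tau$ to the rational form
\[
S(\tau)=Q(\tau)\prod_{\rho\in\tau(1)}(1-\chi^\rho)^{-1}
\]
given by Lemma~\ref{l:generating-function-rat-cone} yields
\[
\sum_{v\in\tau^\circ\cap(N\oplus\ZZ)}\LL^{\omega_X(y,v)}=\bar\varphi_\tau(Q(\tau))\prod_{\rho\in\tau(1)}\bigl(1-\LL^{\omega_X(y,\rho)}\bigr)^{-1},
\]
and by definition the numerator equals $P(\tau,\omega_X)\,\LL^{-\eta(\tau,\omega_X)}$. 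Substituting back and collecting the horizontal and vertical contributions reconstructs the claimed identity. In the horizontal term one uses that $\omega_X(y,\rho,0)$ is independent of $y$ when $\rho\in N_\QQ$ (this is built into Proposition-Definition~\ref{pd:support}), so one may write $\omega_X(C,\rho,0)$ unambiguously.

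The main obstacle is ensuring that every manipulation actually takes place in $\Mc(\LL^{1/m})$, i.e.\ that the formal series converge. This is precisely where the log terminal hypothesis on $X$ intervenes: Lemma~\ref{l:neg-omega} guarantees $\omega_X(y,\rho)<0$ for every generator $\rho$ of a ray of a Cayley cone of $\DF$, which is what makes the morphism $\bar\varphi_\tau$ in Corollary~\ref{c:formal-completion} well-defined and continuous, and which makes the geometric factors $(1-\LL^{\omega_X(y,\rho)})^{-1}$ lie in the completed ring. A secondary subtlety, already flagged in the remark following Lemma~\ref{l:generating-function-rat-cone}, is that the polynomial $Q(\tau)$ and hence $P(\tau,\omega_X)$ must be shown to be independent of the auxiliary simplicial refinement $\Sigma_\tau$, but this is settled there. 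Everything else is a bookkeeping computation: separating horizontal and vertical parts, partitioning supports of fans into relative interiors of cones, and pushing the toric identity of Lemma~\ref{l:generating-function-rat-cone} through $\bar\varphi_\tau$.
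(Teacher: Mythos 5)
Your proof is correct and follows essentially the same route as the paper's (very brief) proof: start from Theorem~\ref{t:stringy-volume}, split into horizontal and vertical contributions, partition each support into relative interiors of cones, push the generating series $S(\tau)$ through $\bar\varphi_\tau$ using Corollary~\ref{c:formal-completion}, and invoke the rational form from Lemma~\ref{l:generating-function-rat-cone}. The paper compresses these bookkeeping steps into a single displayed equation; your write-up makes explicit the partition by relative interiors, the identification of $\tau\in\DF_y^\star$ with the $\ell\geq1$ condition, and the role of Lemma~\ref{l:neg-omega} in ensuring convergence, all of which match the paper's intent.
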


\begin{proof}
This follows from Theorem~\ref{t:stringy-volume} and Corollary~\ref{c:formal-completion}. 
\end{proof}

As a consequence of the theorem, under some assumptions on $\DF$, we obtain also a formula for the stringy Euler characteristic of the variety $X$.

\begin{corollary}
Let $(M,I)$ be the pair describing the horospherical homogeneous space $G/H$. Consider $W := N_{G}(Q)/Q$ the Weyl group of $(G,Q)$ and $W_{I}\subset W$ the subset defined in Section~\ref{s:horo-def}. Let $r := \dim(N_{\QQ})$.  Assume that for every tail cone $\tau\in \Sigma(\DF)$ and every $\tau'\in \DF_{y}$ for $y\in\Sp(\DF)$, we have $r\geq |\tau(1)|$ and  $r+1\geq |\tau'(1)|$.
Then
\begin{align*}
	\frac{|W_{I}|}{|W|} \, e_{st}(X) = & e(\Gamma)\sum_{\tau\in \Sigma(\DF), \dim(\tau) = r}P(\tau,  \omega_{X})(1)\prod_{\rho\in\tau(1)}\frac{1}{(-\omega_{X}(C,\rho,0))}\\
 &+ \sum_{y\in\Sp(\DF)}\:\:\sum_{\tau\in \DF^{*}_{y}, \dim(\tau) = r+1}P(\tau,  \omega_{X})(1)\prod_{\rho\in\tau(1)}\frac{1}{(-\omega_{X}(y,\rho))},
\end{align*}
where  $P(\tau,  \omega_{X})$ is the Stanley--Reisner polynomial associated with the pair $(\tau, \omega_{X})$.
\end{corollary}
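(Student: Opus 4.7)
The plan is to obtain the formula by applying the Euler characteristic realization $e \colon \Mc(\LL^{1/m}) \dashrightarrow \QQ$ to the rational form of $\E(X)$ provided by Theorem~\ref{t:candidate-poles}, and to show that under the dimensional hypotheses on $\DF$ only full-dimensional simplicial cones contribute. First, I will rewrite each summand from Theorem~\ref{t:candidate-poles} in the cleaner form $\bar{\varphi}_\tau(Q(\tau))/\prod_{\rho\in\tau(1)}(1-\LL^{\omega_X(\rho)})$, using the identity $P(\tau,\omega_X)\LL^{-\eta(\tau,\omega_X)} = \bar{\varphi}_\tau(Q(\tau))$ that is built into the definition of the Stanley--Reisner polynomial. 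Then I will use the decomposition $[G/H] = (\LL-1)^r[G/P]$ (consequence of the Zariski-local triviality of the torus fibration $G/H\to G/P$, with $r = \dim T = \operatorname{rank} N$) together with $e(G/P)= |W|/|W_I|$, which follows from the Bruhat cell decomposition of the generalized flag variety.

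Next, I will expand each factor at $\LL = 1$ via $1 - \LL^a = -a(\LL-1) + O((\LL-1)^2)$, giving
\[
\prod_{\rho\in\tau(1)}(1-\LL^{\omega_X(\rho)}) = (\LL-1)^{|\tau(1)|}\Bigl(\prod_{\rho\in\tau(1)}(-\omega_X(\rho))\Bigr) + O((\LL-1)^{|\tau(1)|+1}),
\]
which makes sense by Lemma~\ref{l:neg-omega} since every $-\omega_X(\rho)$ is a positive rational. After combining with the factor $(\LL-1)^r$ (resp.\ $(\LL-1)^{r+1}$) coming from $[G/H]$ (resp.\ from $[G/H](\LL-1)$), the first sum acquires an overall factor $(\LL-1)^{r-|\tau(1)|}$ and the second sum a factor $(\LL-1)^{r+1-|\tau(1)|}$. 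Under the hypotheses $r\geq|\tau(1)|$ for $\tau\in\Sigma(\DF)$ and $r+1\geq|\tau'(1)|$ for $\tau'\in\DF_y$, each such power is non-negative, so all cones for which the inequality is strict contribute $0$ at $\LL=1$.

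The remaining terms are exactly those with $|\tau(1)|=r$ in the first sum and $|\tau(1)|=r+1$ in the second. Since one always has $\dim\tau\leq|\tau(1)|$ with equality precisely when $\tau$ is simplicial, and $\dim\tau\leq r$ (resp.\ $\leq r+1$) for cones in $N_\QQ$ (resp.\ in $N_\QQ\oplus\QQ$), the equalities $|\tau(1)|=r$ and $|\tau(1)|=r+1$ force $\dim\tau=r$, respectively $\dim\tau=r+1$, and in both cases the cones are automatically simplicial. Evaluating the surviving contributions at $\LL=1$ using $e([\Gamma])=e(\Gamma)$, $e([G/P])=|W|/|W_I|$, and $\bar{\varphi}_\tau(Q(\tau))|_{\LL=1}= P(\tau,\omega_X)(1)$ (the last because specializing $\LL=1$ in a polynomial in $t=\LL^{1/m}$ is the same as setting $t=1$) produces
\[
e_{st}(X) = \frac{|W|}{|W_I|}\Bigl( e(\Gamma)\!\sum_{\substack{\tau\in\Sigma(\DF) \\ \dim\tau=r}}\frac{P(\tau,\omega_X)(1)}{\prod_{\rho\in\tau(1)}(-\omega_X(C,\rho,0))} + \sum_{y\in\Sp(\DF)}\!\sum_{\substack{\tau\in\DF^\star_y \\ \dim\tau=r+1}}\frac{P(\tau,\omega_X)(1)}{\prod_{\rho\in\tau(1)}(-\omega_X(y,\rho))}\Bigr).
\]
Multiplying through by $|W_I|/|W|$ yields the stated formula.

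The main obstacle I expect is the rigorous justification of the ``evaluation at $\LL=1$'' step: the stringy Euler characteristic is defined via the $E$-function specialization, and one must check that the rational form from Theorem~\ref{t:candidate-poles} admits a well-defined limit at $\LL=1$ and that this limit agrees with $e(\E(X))$. This should follow from the convergence properties of $\E(X)$ in $\Mc(\LL^{1/m})$ and the fact that the $E$-polynomial realization commutes with the relevant rational identities, but it requires a careful bookkeeping of vanishing orders of $(\LL-1)$ in the numerator versus the denominator, which is precisely what the dimensional hypotheses $r\geq|\tau(1)|$ and $r+1\geq|\tau'(1)|$ are designed to control.
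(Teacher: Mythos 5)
Your proposal takes essentially the same route as the paper: substitute $[G/H] = (\LL-1)^r [G/P]$, apply the $E$-polynomial (equivalently the Euler-characteristic) realization to the rational form of Theorem~\ref{t:candidate-poles}, use $e(G/P) = |W|/|W_I|$ via Bruhat/fixed-point counting, and let $\LL \to 1$ (the paper passes through $E_{st}(X;u,v)$ and specializes $u,v \to 1$). This matches the paper's argument step for step.

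One logical slip is worth flagging. You assert that $|\tau(1)|=r$ together with $\dim\tau\leq|\tau(1)|$ and $\dim\tau\leq r$ ``forces $\dim\tau = r$''. This inference is not valid: a strongly convex cone in $N_\QQ$ with $r$ rays need not span $N_\QQ$ (already for $r\geq 4$ a three-dimensional non-simplicial cone can have $r$ extremal rays). Such a cone would survive the $(\LL-1)$-power bookkeeping exactly as a full-dimensional one would, so your argument does not by itself show it drops out of the sum. What actually saves the formula is a property of the Stanley--Reisner polynomial rather than of the cone's ray count: the series $\bar\varphi_\tau(S(\tau))=\sum_{u\in\tau^\circ\cap(N\oplus\ZZ)}\LL^{\omega_X(y,u)}$ has a pole of order exactly $\dim\tau$ at $\LL=1$ (an Ehrhart-type growth estimate, since $\omega_X$ is negative and linear on $\tau$), while the denominator $\prod_{\rho\in\tau(1)}(1-\LL^{\omega_X(y,\rho)})$ vanishes to order $|\tau(1)|$; consequently $P(\tau,\omega_X)(1)=0$ whenever $|\tau(1)|>\dim\tau$, and the extra terms do vanish. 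The paper's own proof is equally terse on this point, so you are not missing a step the authors make explicit, but the deduction as you wrote it is incorrect and should be replaced by this vanishing of the Stanley--Reisner polynomial.
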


\section{Stringy Euler characteristic and a smoothness criterion}

In this section we start in~\ref{s:hypersurface} by illustrating Theorem~\ref{t:stringy-volume} on the example of a hypersurface endowed with a $(\CC^{*})^{2}$-action which was initially studied by Liendo and S\"u\ss~\cite[Example 1.1]{LiendoSuss}. Then, analogously to \cite[Theorem 5.3]{Batyrev-Moreau}, in~\ref{s:smoothness} we deduce from Theorem~\ref{t:stringy-volume} a smoothness criterion for locally factorial horospherical varieties of complexity one.

\subsection{An example where the acting group is a torus}\label{s:hypersurface}

	Let $N=\ZZ^2$ and $\sigma=\Cone((1,0),(1,6)) \subset N_\QQ = \QQ^2$. Define a $\sigma$-polyhedral divisor on $(\PP^1,(\CC^*)^2)$ by $\D = \sum_{y \in \PP^1} \Delta_y \cdot [y]$, where
	\begin{align*}
		\Delta_y = \begin{cases}
						\Conv((1,0),(1,1)) +  \sigma & \text{if $y=0$,} \\
						(-\frac{1}{2},0) + \sigma & \text{if $y=1$,} \\
						(-\frac{1}{3},0) + \sigma & \text{if $y=\infty$,} \\
						\sigma & \text{otherwise.}
					\end{cases}
	\end{align*}
	
	\begin{figure}
		\centering
		\begin{tikzpicture}[scale=1/2]
			\shade[lower right=white, upper left=white, lower left=black, upper right=white,fill opacity=0.6] (1,0) -- (7,0) -- (7,6) -- (11/6,6) -- (1,1) -- (1,0) ;
			\shade[lower right=white, upper left=white, lower left=black, upper right=white,fill opacity=0.6] (9.5,0) -- (17,0) -- (17,6) -- (10.5,6) -- (9.5,0);
			\shade[lower right=white, upper left=white, lower left=black, upper right=white,fill opacity=0.6] (59/3,0) -- (27,0) -- (27,6) -- (62/3,6) -- (59/3,0);
			\draw[very thick,->]	(-1,0) -- (7.3,0);
			\draw[very thick,->]	(0,-1) -- (0,6);
			\draw[thick]	(1,0) -- (7,0)
						(1,0) -- (1,1)
						(1,1) -- (11/6,6);
			\draw[very thick,->]	(9,0) -- (17.3,0);
			\draw[very thick,->]	(10,-1) -- (10,6);
			\draw[thick]	(9.5,0) -- (17,0)
						(9.5,0) -- (10.5,6);
			\draw[very thick,->]	(19,0) -- (27.3,0);
			\draw[very thick,->]	(20,-1) -- (20,6);
			\draw[thick]	(59/3,0) -- (27,0)
						(59/3,0) -- (62/3,6);
			\draw (4,3) node {$\Delta_0$};
			\draw (14,3) node {$\Delta_1$};
			\draw (24,3) node {$\Delta_\infty$};
			\node[draw,fill,circle,inner sep=1.5pt] at (1,0) {};
			\node[draw,fill,circle,inner sep=1.5pt] at (0,1) {};
			\node[draw,fill,circle,inner sep=1.5pt] at (9.5,0) {};
			\node[draw,fill,circle,inner sep=1.5pt] at (59/3,0) {};
			\draw[thick,dashed] (0,1) -- (1,1);
			\draw (1,-0.7) node {$1$};
			\draw (-0.6,1) node {$1$};
			\draw (9.5,-0.9) node {$\frac{1}{2}$};
			\draw (59/3,-0.9) node {$\frac{1}{3}$};
		\end{tikzpicture}
	\end{figure}	
	
	The variety $X(\D)$ is a hypersurface in $\AA^4$, given by the equation $x_2^3-x_3^2+x_1 x_4=0$, where the $(\CC^*)^2$-action is given by
	\[
		(\lambda_1,\lambda_2) \cdot (x_1,x_2,x_3,x_4) = (\lambda_2 x_1,\lambda_1^2 x_2,\lambda_1^3 x_3,\lambda_1^6 \lambda_2^{-1} x_4).
	\]
	An easy computation shows that the stringy motivic volume of $X(\D)$ is
	\begin{align*}
		\E(X) 
		&= \frac{\LL(\LL-1)^2}{(1-\LL^{-5})^2}(1-2\LL^{-1}+3\LL^{-2}+3\LL^{-3}+4\LL^{-4}+10\LL^{-5}-10\LL^{-6}+15\LL^{-7}+3\LL^{-8}+2\LL^{-9}+\LL^{-10})
.\end{align*}
	Thus $e_{st}(X) = 6/5> e(X) = 1$.	
\subsection{Stringy Euler characteristic and a smoothness criterion}\label{s:smoothness}
\begin{lemma}\label{l:stringy-euler}
	Let $(\D,\F)$ be a colored $\sigma$-polyhedral divisor on $(C,G/H)$ with affine locus $C_0$. Assume that $X(\D)$ is locally factorial (that is, that any Weil divisor is Cartier), and that $\sigma$ has dimension $\dim X-1$. Then the stringy Euler characteristic is equal to
	\[
		e_{st}(X) = e(C_0) \frac{|W|}{|W_I|\prod_{D_\alpha \in \F} a_\alpha},
	\]
	where, as usual, $W$ denotes the Weyl group of $(G,Q)$ and $W_I$ denotes the Weyl group of $(N_G(H),Q)$.
\end{lemma}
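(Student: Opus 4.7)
The plan is to specialize the formula in the Corollary following Theorem~\ref{t:candidate-poles} by evaluating at $u = v = 1$. First I would take $\Gamma := C_0 \setminus \Sp(\D)$, so that $e(\Gamma) = e(C_0) - |\Sp(\D)|$. Local factoriality of $X(\D)$ imposes strong combinatorial constraints on $(\D,\F)$: the tail cone $\sigma$ must be smooth and simplicial (its primitive ray generators extend to a basis of $N$), each Cayley cone $C_y(\D)$ for $y \in \Sp(\D)$ must also be smooth and simplicial of dimension $\dim\sigma + 1$, and for each color $D_\alpha \in \F$ the element $\varrho(D_\alpha)$ must be the primitive generator of a ray of $\sigma$. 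Under the hypothesis $\dim \sigma = r := \operatorname{rk} N$, the dimension assumptions of the corollary are satisfied.

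Smoothness of the relevant cones implies that the fundamental parallelotope $P_\tau$ reduces to $\{0\}$, so the sum $Q(\tau)$ collapses to $\chi^{\rho_1 + \cdots + \rho_k}$ where $\rho_1,\dots,\rho_k$ are the rays of $\tau$. After normalization by $\LL^{\eta(\tau,\omega_X)}$, the Stanley--Reisner polynomial $P(\tau, \omega_X)$ becomes a pure power of $\LL$ and evaluates to $1$ at $u=v=1$. Thus in the limit only the top-dimensional cone $\sigma$ (horizontal part) and the top-dimensional cones $C_y(\D)$ for $y \in \Sp(\D)$ (vertical part) survive. Using Proposition-Definition~\ref{pd:support}, I read off $\omega_X(C,\rho,0) = -1$ for every uncolored ray, $\omega_X(C,\varrho(D_\alpha),0) = -a_\alpha$ for every colored ray (using primitivity of $\varrho(D_\alpha)$), and $\omega_X(y,\lambda_y) = -1$ for the unique ray $\lambda_y$ of $C_y(\D)$ not contained in $N_\QQ$.

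The horizontal contribution to $e_{st}(X)$ then becomes
\[
e(G/P) \cdot e(\Gamma) \cdot \prod_{\rho \in \sigma(1)} \frac{1}{-\omega_X(C,\rho,0)} = \frac{|W|}{|W_I|} \cdot \frac{e(\Gamma)}{\prod_{D_\alpha \in \F} a_\alpha},
\]
while, because the horizontal rays of $C_y(\D)$ coincide with those of $\sigma$ and the single extra vertical ray $\lambda_y$ contributes the factor $1$, each special point $y \in \Sp(\D)$ contributes
\[
\frac{|W|}{|W_I|} \cdot \frac{1}{\prod_{D_\alpha \in \F} a_\alpha}.
\]
Summing the horizontal contribution with $|\Sp(\D)|$ copies of the vertical contribution and using $e(\Gamma) + |\Sp(\D)| = e(C_0)$ yields the claimed formula.

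The main obstacle will be to rigorously extract from local factoriality the combinatorial conditions used above—namely the smoothness and simplicity of $\sigma$ and the $C_y(\D)$, and the primitivity of $\varrho(D_\alpha)$ on its ray. This should follow from a local analysis of which $B$-stable Weil divisors on the $B$-chart attached to $(\D,\F)$ admit a local equation from $\CC(C)^{\star} \otimes_\CC \CC[M]$, in parallel with the classical toric picture where local factoriality is equivalent to smoothness of the fan.
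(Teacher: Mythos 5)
Your approach is genuinely different from the paper's. The paper's proof recognizes the two sums in Theorem~\ref{t:stringy-volume} (over $\sigma\cap N$ and over each $C_y(\D)\setminus(\sigma\times\{0\})$) as the stringy motivic volumes of complexity-zero horospherical embeddings $X(\sigma,\F)$ and $X(C_y(\D),\F)$, then cites Batyrev--Moreau's Theorem~4.3 and Proposition~5.11 to pass to $e_{st}$. Your route instead specializes the Stanley--Reisner description from Section~5.5 at $t=1$. The two routes buy different things: the paper's is shorter and offloads the heavy lifting to the complexity-zero theory; yours is self-contained and makes the role of the combinatorics explicit, at the cost of the cone-regularity analysis.

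However, your proof as written has a genuine gap (beyond the one you flag), and the one you flag is also more serious than you suggest.

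\textbf{(1) The choice of $\Gamma$ and applicability of the Corollary.} You take $\Gamma := C_0\setminus\Sp(\D)$, which is the right thing to do, but Theorem~\ref{t:candidate-poles} and its Corollary are stated with $\Gamma = C\setminus\Sp(\DF)$. When $\D$ has an \emph{affine} locus $C_0\subsetneq C$, the set $C\setminus\Sp(\D)$ is not contained in $C_0$, so $(C\setminus\Sp(\D))\times G/H$ is not an open subset of $X(\D)$ and the derivation of Theorem~\ref{t:candidate-poles} does not literally apply. You must re-run the short argument deriving Theorem~\ref{t:candidate-poles} from Theorem~\ref{t:stringy-volume} with your $\Gamma$. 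This is easy but must be said; otherwise one would wrongly get $e(C)$ in place of $e(C_0)$.

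\textbf{(2) The combinatorial consequences of local factoriality.} You acknowledge that the key technical input is the deduction, from local factoriality, that $\sigma$ is regular (rays a lattice basis), that each $\Delta_y$ for $y$ special has a single vertex $p$ with $\kappa(p)=1$ so that $C_y(\D)$ is a regular simplicial cone of dimension $r+1$, and that $\varrho(D_\alpha)$ is the primitive generator of a ray for each $D_\alpha\in\F$. This is exactly right, and it must be established via the description of $B$-stable Cartier divisors through $\PL(\D,\QQ)$ (Definition~\ref{d:support-fun}, Proposition~\ref{p:theta}): surjectivity of $\theta\mapsto D_\theta$ onto the Weil divisor group forces the set $\{\text{primitive uncolored rays}\}\cup\{\varrho(D_\alpha):D_\alpha\in\F\}\cup\{(\kappa(p)p,\kappa(p)):p\ \text{vertex of}\ \Delta_y\}$ to extend to a basis of $N\oplus\ZZ$ for each $y$. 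Without this you cannot conclude that $Q(\tau)=\chi^{\sum\rho}$ and that $P(\tau,\omega_X)(1)=1$; for a non-regular cone the fundamental parallelotope is nontrivial and the Stanley--Reisner polynomial genuinely contributes (compare the $A_1$ toric cone, where $e_{st}=2\neq1$). It is worth noting that the paper's own proof implicitly relies on the same input: applying \cite[Proposition~5.11]{Batyrev-Moreau} to $X(\sigma,\F)$ and $X(C_y(\D),\F)$ requires those complexity-zero embeddings to be locally factorial, which is precisely the content of this combinatorial analysis. (The paper does verify a closely related Claim inside the proof of Lemma~\ref{l:euler}, but not in Lemma~\ref{l:stringy-euler}.) So while this is a real gap in your writeup, it is a gap your approach shares with the paper's, and your framing makes it more visible.

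Once (1) and (2) are supplied, the rest of your computation — $P(\tau,\omega_X)(1)=1$ for regular cones, $\omega_X=-1$ on uncolored and vertical rays, $\omega_X=-a_\alpha$ at $\varrho(D_\alpha)$, $e(G/P)=|W|/|W_I|$, and the telescoping $e(\Gamma)+|\Sp(\D)|=e(C_0)$ — is correct and gives the stated formula.
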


\begin{proof}
	Let $\Gamma \subset C_0\setminus \Sp(\D)$ be an open subset. By Theorem~\ref{t:stringy-volume} we have
	\begin{align*}
		\E(X) &= [\Gamma] [G/H] \sum_{\nu \in \sigma \cap N} \LL^{\omega_X(C,\nu,0)} + (\LL-1) [G/H] \sum_{y \in C_0 \setminus \Gamma} \: \sum_{(\nu,\ell) \in C_y(\D) \setminus (\sigma \times \{0\})}\LL^{\omega_X(y,\nu,\ell)}\\
		&= [\Gamma] \E(X(\sigma,\F)) + \sum_{y \in C_0 \setminus \Gamma} \left( \E(X(C_y(\D),\F)) -(\LL-1)\E(X(\sigma,\F)) \right),
	\end{align*}
	where $X(\sigma,\F)$ (respectively $X(C_y(\D),\F)$) is the $G$-equivariant (respectively $G\times \CC^*$-equivariant) embedding of the horospherical homogeneous space $G/H$ (respectively $G/H \times \CC^*$) associated with the colored cone $(\sigma,\F)$ (respectively $(C_y(\D),\F)$). Here the last equality comes from \cite[Theorem 4.3]{Batyrev-Moreau}.
	
	Passing to the stringy $E$-polynomial and evaluating at $u=v=1$, we obtain
	\[
		e_{st}(X) = e(\Gamma) e_{st}(X(\sigma,\F)) + \sum_{y \in C_0 \setminus \Gamma} e_{st}(X(C_y(\D),\F)).
	\]
	Using \cite[Proposition 5.11]{Batyrev-Moreau} we see that $e_{st}(X(C_y(\D)),\F)=e_{st}(X(\sigma,\F))$ since the stringy Euler characteristic only depends on the Weyl groups and the colors, which are unchanged.
	We deduce
	\[
		e_{st}(X) = e(C_0) e_{st}(X(\sigma,\F))\:\:\text{and}\:\: e_{st}(X(\sigma,\F)) = \frac{|W|}{|W_I| \prod_{D_\alpha \in \F} a_\alpha }
	\]
		which concludes the proof.
\end{proof}

The next proposition gives a full description of the $G$-orbits of a simple $G$-model of $C \times G/H$ corresponding to a colored $\sigma$-polyhedral divisor $(\D,\F)$ with affine locus $C_0$. For the complexity zero case we refer to \cite[Proposition 2.4]{Batyrev-Moreau}.

\begin{proposition}\label{p:G-orbits}
	Let $(\D,\F)$ be a colored $\sigma$-polyhedral divisor on $(C,G/H)$ with affine locus $C_0$. Then $X:=X(\D,\F)$ has two types of $G$-orbits :
	\begin{enumerate}[(i)]
		\item \emph{horizontal orbits}, which are contained in the $G$-invariant open subset $\Gamma \times X(\sigma,\F)$ of $X$, where $\Gamma = C_0 \setminus \Sp(\D)$ and $X(\sigma,\F)$ is the $G/H$-embedding associated with the colored cone $(\sigma,\F)$,
		\item \emph{vertical orbits}, which are the remaining $G$-orbits.
	\end{enumerate}
	
	Moreover, the horizontal $G$-orbits of $X$ are parametrized by triples $(y,\tau,\F_\tau)$, where $y \in \Gamma$, $\tau$ is a face of $\sigma$, and $\F_\tau = \{ D \in \F \mid \varrho(D) \in \tau \}$. The stabilizer $H_{y,\tau,\F_\tau}=:H_\tau$ of such a triple is given by
	\[
		H_\tau = \{ g \in P_{I_{\F_\tau}} \mid \chi^m(g)=1 \text{ for any } m\in \tau^\perp \cap M \},
	\]
	where $I_{\F_\tau} \subset \Phi$ is the reunion with $I$ of the set of simple roots indexing the colors in $\F_\tau$, and $\chi^m$ is the character of $P_{I_{\F_\tau}}$ associated with $m$.
	Finally, the vertical $G$-orbits of $X$ are parametrized by triples $(y,F,\F_F)$, where $y \in \Sp(\D)$, $F$ is a face of the $\sigma$-polyhedron $\Delta_y$ of $\D$ associated with $y$, and
	\[
		\F_F = \{ D_\alpha \in \F \mid \varrho(D_\alpha) \in \lambda(F) \} \:\: \text{with} \:\:
		\lambda(F) = \{ m \in M_\QQ \mid \langle m,v - v' \rangle \geq 0 \text{ for any } v \in \Delta_y, v' \in F \}.
	\]
	The stabilizer $H_{y,F,\F_F}=:H_{y,F}$ of a triple $(y,F,\F_F)$ is given by
	\[
		H_{y,F} = \{ g \in P_{I_{\F_F}} \mid \forall m\in M_y, \chi^m(g)=1 \}
		\:\:\text{with}\:\:
		M_y = \left\{ m \in M \cap \lambda(F) \cap (-\lambda(F)) \,\middle|\, \min_{v \in \Delta_y} \langle m,v\rangle \in \ZZ \right\}.
	\]
\end{proposition}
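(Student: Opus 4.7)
The plan is to combine three ingredients: the dictionary between germs of $X(\D,\F)$ and hyperfaces of the colored hypercone from \cite[Section 16]{TimashevBook}; the explicit coordinate ring of the $B$-chart $X_0$ recalled in Equation~\eqref{e:coordX0}; and the parabolic induction $X = G \times^{P_\F} Y(\D,\F)$, which reduces the analysis to a horospherical $L$-variety of complexity one accessible via its maximal torus closure. Setting $\Gamma := C_0 \setminus \Sp(\D)$, the horizontal/vertical dichotomy is immediate: the open subset $U := \Gamma \times X(\sigma,\F)$ of $X$ is $G$-stable, as $G$ acts trivially on $\Gamma$ and $X(\sigma,\F)$ is a $G/H$-embedding; its complement is the preimage of $\Sp(\D)$ under the $G$-invariant morphism $X \to C_0$ induced by the affine quotient $Y(\D,\F) \to C_0$, and decomposes as a disjoint union of $G$-stable closed subschemes indexed by the special points.

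For the horizontal orbits, $U$ is a trivial product with trivial $G$-action on $\Gamma$, so every orbit in $U$ has the form $\{y\} \times O$ with $y \in \Gamma$ and $O$ a $G$-orbit of $X(\sigma,\F)$. I then invoke the complexity-zero result \cite[Proposition 2.4]{Batyrev-Moreau}, which classifies the $G$-orbits of the horospherical $G/H$-embedding $X(\sigma,\F)$ by pairs $(\tau,\F_\tau)$ with $\tau$ a face of $\sigma$ and $\F_\tau = \{D \in \F \mid \varrho(D) \in \tau\}$, and delivers the announced stabilizer $H_\tau$. Pulling back yields the claimed parametrization for horizontal orbits and confirms that the stabilizer does not depend on $y \in \Gamma$.

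For the vertical orbits, I fix $y \in \Sp(\D)$. By parabolic induction, the $G$-orbits of the vertical fiber at $y$ are in bijection with the $L$-orbits of the fiber of $Y(\D,\F) \to C_0$ above $y$, and the stabilizer in $G$ is the preimage in $P_\F$ of the corresponding stabilizer in $L$. Inside the horospherical $L$-variety $Y(\D,\F)$ sits the closed $T$-stable subvariety $\Spec A(C_0,\D)$, which meets every $L$-orbit; its $T$-orbits above $y$ are in bijection with the proper faces $F$ of $\Delta_y$, as follows from Theorem~\ref{t:G-div} combined with the torus-action analysis of \cite{AltmannHausen}. The $L$-orbit enlarging such a $T$-orbit acquires the coloring $\F_F = \{D_\alpha \in \F \mid \varrho(D_\alpha) \in \lambda(F)\}$, and its stabilizer in $P_{I_{\F_F}}$ is extracted from the description of $\CC[X_0]$ in Equation~\eqref{e:coordX0} as the common kernel of the characters $\chi^m$ for $m$ ranging over the sublattice $M_y$.

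The main obstacle will be the precise identification of $M_y$, and in particular of the integrality condition $\min_{v \in \Delta_y}\langle m,v\rangle \in \ZZ$ appearing in its definition. This condition reflects the subtle fact that a $B$-eigenfunction $f \otimes \chi^m$ with $f \in H^0(C_0,\OO_{C_0}(\lfloor \D(m)\rfloor))$ extends to a regular unit at a generic point of the vertical orbit $D_{y,F}$ only when $\D(m)$ has an integral multiplicity at $y$; otherwise the floor operation in the definition of $A(C_0,\D)$ forces $f \otimes \chi^m$ to vanish there. Consequently, the sublattice of characters vanishing on the stabilizer differs from the naive candidate $M \cap \lambda(F) \cap (-\lambda(F))$, and tracking this shift carefully through parabolic induction is where the bulk of the technical work lies.
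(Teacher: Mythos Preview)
Your proposal is correct and follows essentially the same route as the paper: parabolic induction to reduce to the affine $L$-variety $Y(\D,\F)$, then \cite[Proposition 2.4]{Batyrev-Moreau} for the horizontal part and the Altmann--Hausen fiber description together with Timashev's general germ theory for the vertical part. The paper's own proof is considerably terser---it simply cites \cite[Section 7]{AltmannHausen} for the reduced-fiber algebra and \cite[Section 28.2]{TimashevBook} for the stabilizer computation---whereas you actually unpack where the integrality condition in $M_y$ comes from, which is valuable. Two minor remarks: the $T$-orbits in the fiber over $y$ correspond to \emph{all} faces of $\Delta_y$, not just proper ones (the improper face gives the open orbit in the fiber); and Theorem~\ref{t:G-div} classifies $G$-divisors rather than arbitrary orbits, so the face--orbit bijection is more directly attributable to \cite[Section 7]{AltmannHausen} or \cite[Section 16]{TimashevBook}.
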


\begin{proof} This follows by combining \cite[Proposition 2.4]{Batyrev-Moreau} and \cite[Section 7]{AltmannHausen}.
\end{proof}

We compute the (usual) Euler characteristic and compare it to the stringy version of Lemma~\ref{l:stringy-euler}.

\begin{lemma}\label{l:euler}
	Under the same assumptions as in Lemma~\ref{l:stringy-euler}, the Euler characteristic of $X$ is 
	\[
		e(X) = e(C_0)\frac{|W|}{|W_{I_\F}|}.
	\]
\end{lemma}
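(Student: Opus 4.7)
The strategy is to compute $e(X)$ directly from the $G$-orbit decomposition of Proposition~\ref{p:G-orbits}, combined with additivity of the Euler characteristic and the vanishing $e(T') = 0$ for any positive-dimensional algebraic torus $T'$.

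First, I would split $X$ along the quotient map $\pi \colon X \to C_0$ as $X = (\Gamma \times X(\sigma, \F)) \sqcup \pi^{-1}(\Sp(\D))$. Additivity and multiplicativity of $e$ then give
\[
  e(X) = e(\Gamma)\cdot e(X(\sigma, \F)) + \sum_{y \in \Sp(\D)} e(\pi^{-1}(y)).
\]
For the horizontal factor, I would apply the argument of Batyrev--Moreau \cite[Proposition 5.11]{Batyrev-Moreau} using the parametrisation of horizontal orbits in Proposition~\ref{p:G-orbits}. Each $O_\tau \cong G/H_\tau$ for a face $\tau$ of $\sigma$ fibres over $G/P_{I_{\F_\tau}}$ with fibre an algebraic torus of rank $n - \dim \tau$. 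Since positive-dimensional tori have $e = 0$, only $\tau = \sigma$ survives (where the torus fibre reduces to a point), yielding $e(X(\sigma, \F)) = e(G/P_{I_\F}) = |W|/|W_{I_\F}|$.

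For the vertical contribution at $y \in \Sp(\D)$, I expect the same mechanism: the vertical orbits $O_{y,F}$ indexed by faces $F$ of $\Delta_y$ each fibre over $G/P_{I_{\F_F}}$ with a torus factor, and all orbits for which the torus factor is of positive dimension contribute zero. Grouping the surviving terms and using that $\Sp(\D)$ is a finite set, I would show $e(\pi^{-1}(y)) = |W|/|W_{I_\F}|$ for every $y \in \Sp(\D)$. Combining with the horizontal piece and $e(C_0) = e(\Gamma) + |\Sp(\D)|$ produces
\[
  e(X) = (e(\Gamma) + |\Sp(\D)|)\cdot \frac{|W|}{|W_{I_\F}|} = e(C_0)\cdot \frac{|W|}{|W_{I_\F}|}.
\]

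The main obstacle is precisely the identification $e(\pi^{-1}(y)) = |W|/|W_{I_\F}|$. Unlike the tail cone $\sigma$, the polyhedron $\Delta_y$ may have a nontrivial bounded part and a rich face lattice, so one must carefully determine which pairs $(F, \F_F)$ yield a trivial torus factor and check that their contributions combine to give the same value as the generic horizontal fibre. The local-factoriality hypothesis on $X(\D)$ is used here to constrain the face structure, in parallel to its role in~\cite[Proposition 5.11]{Batyrev-Moreau} in the complexity-zero setting. Conceptually, the key identity says that the Euler characteristic of $\pi$ is ``constant along fibres'', so the total is simply $e(C_0)$ times the Euler characteristic of the generic fibre $X(\sigma,\F)$.
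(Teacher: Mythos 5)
Your decomposition and the reduction to the identity $\sum_{y\in\Sp(\D)} e(\pi^{-1}(y)) = |\Sp(\D)| \cdot |W|/|W_{I_\F}|$ match the paper's proof exactly up to the point where you stop. But you explicitly flag the identity $e(\pi^{-1}(y)) = |W|/|W_{I_\F}|$ as an ``obstacle'' rather than proving it, and this is precisely the content of the lemma: everything before it is just Proposition~\ref{p:G-orbits} plus $e(T')=0$ for positive-dimensional tori. You correctly sense that local factoriality is needed, but you do not say how to use it, and the face-lattice combinatorics of $\Delta_y$ do not yield the identity by a direct counting argument.

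The paper's device for closing this gap is worth knowing. Writing the needed identity as
\[
\frac{|W|}{|W_{I_\F}|} \;=\; \sum_{v\in\Delta_y(0)} \frac{|W|}{\bigl|W_{I_{\F_{\{v\}}}}\bigr|},
\]
one does not attack it head-on. Instead one builds an auxiliary $\sigma$-polyhedral divisor $\hat\D$ on $\PP^1$ with $\hat\Delta_0=\Delta_y$ and $\hat\Delta_z=\sigma$ for $z\neq 0$, and observes that $X(\hat\D)$ is the horospherical $(G\times\CC^\star)$-embedding of $G/H\times\CC^\star$ attached to the single colored cone $(C_y(\D),\F)$. A support-function argument (using local factoriality of $X(\D)$) shows $X(\hat\D)$ is again locally factorial, so the complexity-zero formula of Batyrev--Moreau applies to it and gives $e(X(\hat\D))=|W|/|W_{I_\F}|$. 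On the other hand, decomposing $X(\hat\D)$ by $G\times\CC^\star$-orbits gives $e(X(\hat\D))=e(\CC^\star)\,e(X(\sigma,\F))+\sum_{v}|W|/|W_{I_{\F_{\{v\}}}}|$; since $e(\CC^\star)=0$ only the vertical sum survives, and the identity follows. Without this reduction to a complexity-zero auxiliary variety, the step you call the ``main obstacle'' remains genuinely open in your proposal.
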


\begin{proof}
	By Proposition~\ref{p:G-orbits} we have a decomposition $e(X) = e(\Gamma \times X(\sigma,\F)) + \sum_{O \text{ vertical $G$-orbit}} e(O).$ Using \cite[Proposition 5.11]{Batyrev-Moreau} and the parametrization of vertical orbits in Proposition~\ref{p:G-orbits}, we obtain
	\[
		e(X) = e(\Gamma) \frac{|W|}{|W_{I_\F}|} + \sum_{y \in \Sp(\D)} \sum_{v \in \Delta_y} \frac{|W|}{\left|W_{I_{\F_{\{v\}}}}\right|}.
	\]
	Thus it only remains to prove that 
	\begin{equation}\label{e:euler-identity}
		\frac{|W|}{|W_{I_\F}|} = \sum_{v \in \Delta_y} \frac{|W|}{\left|W_{I_{\F_{\{v\}}}}\right|}.
	\end{equation}
	Fix a point $y_0 \in C_0$ and define  $\hat \D := \sum_{z \in \AA^1} \hat \Delta_z \cdot [z]$ a colored $\sigma$-polyhedral divisor $(\hat \D,\F)$ on $(\PP^1,G/H)$ where $\hat \Delta_z = \Delta_{y_0}$ if $z=0$ and otherwise  $\hat \Delta_z = \sigma$.
		The variety $X(\hat \D)$ is locally factorial, horospherical, and it identifies with the $(G \times \CC^*)$-equivariant embedding of $G/H \times \CC^*$ associated with the colored cone $(C_0(\hat \D),\F)=(C_y(\D),\F)$. Thus by \cite[Proposition 5.11]{Batyrev-Moreau}, we obtain
	\[
		\frac{|W|}{|W_{I_\F}|} = e(X(C_0(\hat \D),\F)) = e(X(\hat \D)) = e(\CC^*) e(X(\sigma,\F)) + \sum_{v \in \Delta_y} \frac{|W|}{\left|W_{I_{\F_{\{v\}}}}\right|}.
	\]
	Since $e(\CC^*)=0$, this proves Equation~\eqref{e:euler-identity} and concludes the proof.
\end{proof}

Combining Lemmas~\ref{l:stringy-euler} and~\ref{l:euler} with the smoothness criterion of \cite[Theorem 2.5]{LangloisTerpereau}, we obtain 

\begin{theorem}\label{t:smoothness}
	Let $(\D,\F)$ be a colored $\sigma$-polyhedral divisor on $(C,G/H)$ with affine locus $C_0 \subset C$. Assume that $X=X(\D)$ is locally factorial and that for any $y \in C_0$, the Cayley cone $C_y(\D)$ has dimension $d=\dim X$. Then we have $e_{st}(X) \geq e(X)$. Moreover, if $2-2g-|C \setminus C_0| \neq 0$, then $X$ is smooth if and only if $e_{st}(X)= e(X)$.
\end{theorem}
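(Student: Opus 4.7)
The plan is to exploit the explicit formulas already derived in Lemmas~\ref{l:stringy-euler} and~\ref{l:euler} and to reduce the comparison to the known complexity-zero case of~\cite{Batyrev-Moreau}. First I would verify that both lemmas apply under our hypotheses: the assumption that every Cayley cone $C_y(\D) \subset N_\QQ \oplus \QQ$ attains the maximal dimension $d = \dim X$ forces its face $\sigma \times \{0\}$ (where $\sigma$ is the tail of $\D$) to have dimension $d-1$, which is exactly the condition required by Lemma~\ref{l:stringy-euler}. Combining the two formulas yields
\[
	e_{st}(X) - e(X) \;=\; e(C_0)\cdot |W|\cdot\left(\frac{1}{|W_I|\prod_{D_\alpha \in \F} a_\alpha} \;-\; \frac{1}{|W_{I_\F}|}\right),
\]
so the inequality is controlled by the sign of the parenthetic difference and of $e(C_0) = 2-2g-|C\setminus C_0|$.

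Second, I would reinterpret the parenthetic factor by applying the same formulas in the complexity-zero setting. The colored cone $(C_y(\D),\F)$ describes a locally factorial horospherical $(G\times \CC^\star)$-equivariant embedding $X(C_y(\D),\F)$ of $G/H \times \CC^\star$, with $C_y(\D)$ of maximal dimension. For this complexity-zero variety, \cite[Proposition~5.11]{Batyrev-Moreau} gives $e_{st}(X(C_y(\D),\F)) = |W|/(|W_I|\prod a_\alpha)$, while the computation already performed inside the proof of Lemma~\ref{l:euler} gives $e(X(C_y(\D),\F)) = |W|/|W_{I_\F}|$. Hence the parenthesis above equals $\frac{1}{|W|}\bigl(e_{st}(X(C_y(\D),\F)) - e(X(C_y(\D),\F))\bigr)$, and \cite[Theorem~5.3]{Batyrev-Moreau} applied to $X(C_y(\D),\F)$ shows that this quantity is non-negative, and vanishes exactly when $X(C_y(\D),\F)$ is smooth.

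For the equivalence, assume now $e(C_0) \neq 0$. Then $e_{st}(X) = e(X)$ is equivalent to $X(C_y(\D),\F)$ being smooth for every $y \in C_0$. By the combinatorial smoothness criterion \cite[Theorem~2.5]{LangloisTerpereau}, which characterizes smoothness of $X(\D,\F)$ cone by cone in terms of the horospherical complexity-zero varieties attached to the Cayley cones, this is in turn equivalent to $X$ itself being smooth, giving the claimed equivalence.

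The main obstacle I anticipate is the sign of $e(C_0)$: since the affine locus $C_0$ is a strict open subset of the projective curve $C$, the Euler characteristic $e(C_0) = 2-2g-|C\setminus C_0|$ can a priori be negative, and multiplying the non-negative Batyrev--Moreau difference by a negative $e(C_0)$ would reverse the desired inequality. I would need to verify that the structural hypotheses (affine locus, Cayley cones of maximal dimension at every point of $C_0$, local factoriality of $X$) in fact force $e(C_0) \geq 0$, or else read the inequality with the sign of $e(C_0)$ taken into account; once this sign issue is settled, the remaining pieces fit together from Lemmas~\ref{l:stringy-euler}, \ref{l:euler} and the cited results of \cite{Batyrev-Moreau} and \cite{LangloisTerpereau}.
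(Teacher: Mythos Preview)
Your approach is exactly the paper's: the proof there is the single sentence ``Combining Lemmas~\ref{l:stringy-euler} and~\ref{l:euler} with the smoothness criterion of \cite[Theorem 2.5]{LangloisTerpereau}, we obtain the following result.'' You have spelled out the mechanism (the Batyrev--Moreau comparison for the auxiliary complexity-zero variety $X(C_y(\D),\F)$) that is implicit in that sentence, and this is correct.

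The sign issue you flag is a genuine point that the paper's one-line proof does not address. Nothing in the hypotheses (affine locus, local factoriality, full-dimensional Cayley cones) forces $e(C_0)=2-2g-|C\setminus C_0|\geq 0$: already for $C=\PP^1$ and three or more removed points one has $e(C_0)<0$, and then the non-negative Batyrev--Moreau difference gets multiplied by a negative number, reversing the inequality. So the unconditional inequality $e_{st}(X)\geq e(X)$ as stated appears to require the extra assumption $e(C_0)\geq 0$; without it one only obtains that $e_{st}(X)-e(X)$ and $e(C_0)$ have the same sign. The equivalence part of the theorem, under the hypothesis $e(C_0)\neq 0$, is unaffected by this and your argument for it via \cite[Theorem~5.3]{Batyrev-Moreau} and \cite[Theorem~2.5]{LangloisTerpereau} goes through.
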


\section*{Acknowledgements}

The authors would like to thank Anne-Sophie Kaloghiros, Hendrik S\"u{\ss},
Willem Veys and Takehiko Yasuda for useful discussions. 

This research is supported by the Max Planck Institute for Mathematics of Bonn, the LABEX MILYON (ANR-10-LABX-0070), the project ANR-15-CE40-0008 (D\'efig\'eo) and the ERC Grant Agreement nr. 246903 NMNAG.

\bibliographystyle{amsalpha}
\bibliography{bibname}
\end{document}